\documentclass{amsart}
\usepackage{amsmath,enumitem, bbold,subcaption,mathtools}
\usepackage{hyperref}
\usepackage[nameinlink]{cleveref}

\def\mm{\mathfrak{m}}
\def\nn{\mathfrak{n}}

\def\k{\mathbb{k}}
\def\I{\mathbb{I}}

\def\N{\mathbb{N}}

\def\R{\mathbb{R}}

\def\Z{\mathbb{Z}}

\def\AA{\mathcal{A}}

\def\CC{\mathcal{C}}
\def\DD{\mathcal{D}}

\def\HH{\mathcal{H}}

\def\PP{\mathcal{P}}

\def\aa{\mathbf{a}}
\def\bb{\mathbf{b}}
\def\cc{\mathbf{c}}
\def\ee{\mathbf{e}}

\def\pp{\mathbf{p}}
\def\uu{\mathbf{u}}
\def\vv{\mathbf{v}}

\def\ggg{\mathbf{g}}

\def\xx{\mathbf{x}}

\def\eu{\vartheta}

\def\m#1{\begin{pmatrix}#1\end{pmatrix}}

\def\endo{\text{End}}

\DeclareMathOperator\pd{\partial}
\def\deg{\text{deg}}

\DeclareMathOperator\ord{\text{ord}}

\def\mod{\ \text{mod}\ }

\DeclareMathOperator\Ass{\text{Ass}}
\DeclareMathOperator\std{\text{std}}
\DeclareMathOperator\ass{\text{Ass}}
\DeclareMathOperator\supp{\text{Supp}}
\DeclareMathOperator\frob{\textup{Frob}}
\newcommand\gaps{\textup{gaps}}
\newcommand\diff{\textup{diff}}
\newcommand\sepdeg{\textup{sepdeg}}

\newtheorem{thm}{Theorem}[section]
\newtheorem*{thm*}{Theorem}
\newtheorem{lemma}[thm]{Lemma}
\newtheorem{cor}[thm]{Corollary}
\newtheorem{prop}[thm]{Proposition}

\newtheorem{setup}[thm]{Setup}

\theoremstyle{definition}
\newtheorem{defn}[thm]{Definition}
\newtheorem{ex}[thm]{Example}

\theoremstyle{remark}
\newtheorem{rem}[thm]{Remark}

\numberwithin{equation}{section}

\newcommand{\blank}[1]{}

 \title{Differential powers of monomial ideals in affine semigroup rings}

     \author{Soumyadeep Misra}
     \author{Christopher Wong}
     \address{1460 Jayhawk Blvd, Lawrence, KS, 66044, USA}
     \email{cywong@ku.edu}
     \email{soumyadeep@ku.edu}
     \thanks{The second author was partially supported by the standard NSF grant DMS-2302685 and the NSF CAREER grant DMS-1945611}

     \date{October 21st 2026}

     \keywords{D-module, Differential powers, Affine semigroup, Symbolic powers}
     \subjclass{13N10 (primary), 13F65 (secondary)}

\begin{document}

\begin{abstract}
This article gives explicit descriptions for differential powers of monomial ideals in certain (possibly non-normal) affine semigroup rings. We use these descriptions to show that the topologies defined by the ordinary, symbolic, and differential powers of any monomial ideal are all linearly equivalent. We then prove the existence of differential signature in our settings, relating it to the volume of some convex polytope. Finally, we apply our results to pinched Veronese rings.
\end{abstract}

\maketitle 

\section{Introduction}

The differential powers of an ideal $I\subseteq R$, denoted by $I^{\langle n\rangle}$ for some $n\ge 0$, were first studied for their connection to symbolic powers, as well as their ability to characterize regularity of local rings. They also play a key role in defining \emph{differential signature}, a characteristic free analogue of $F$-signature, which has been used to quantify singularities of local rings, \cite{DDSG+18}, \cite{BJNB19}. While explicit computations of differential powers are scarce in the literature, there has been recent study into the number of generators of $I^{\langle n\rangle}$ for certain monomial ideals in a polynomial ring as $n$ grows, \cite{KMP+26}. 

This article considers another asymptotic measure of differential powers: when is the differential power topology determined by $\{I^{\langle n\rangle}\}_{n\ge 0}$ and the $I$-adic topology determined by $\{I^n\}_{n\ge 0}$ equivalent? A positive answer could be interpreted to mean that differential and ordinary powers of an ideal behave similarly for $n\gg 0$. 

This question follows an analagous one for \emph{symbolic powers}, $I^{(n)}$. Indeed, for any ideals $I$ in a finitely generated $\k$-algebra, we have the pair of containments,
\[
I^n\subseteq I^{(n)}\subseteq I^{\langle n\rangle},
\]
so showing that the differential power topology and $I$-adic topology are equivalent is sufficient to show that the symbolic and $I$-adic topologies are equivalent. Conversely, settings when the symbolic topology and $I$-adic topology are equivalent suggests an analogous result for differential powers. 

We know that the symbolic and $I$-adic topologies are equivalent if and only if $I^{(Cn)}\subseteq I^n$ for some $C>0$ and all $n\ge 0$; moreover, such a $C$ exists for every prime ideal $P$ of a complete local domain, \cite{HKV09}, \cite{Swa00}. Computations of this constant $C$ are scarce in the literature, and we provide concrete examples of this constant in this article.

This article will focus on monomial ideals in two classes of (possibly non-normal) semigroup rings: $\CC$-semigroup rings and scored semigroup rings. These are both obtained by taking a saturated affine semigroup and removing relatively few points, and can be considered as a higher dimensional analogue of \emph{numerical semigroups}, \Cref{scored defn}. We find an explicit description of differential operators of these semigroup rings which we use to derive an explicit description of the differential powers of their monomial ideals. More precisely, $I^{\langle n\rangle}$ is a monomial ideal, and membership of some monomial in $I^{\langle n\rangle}$ is determined by whether some explicitly computable function, $\lambda_I(\uu)$, is greater than or equal to $n$, \Cref{membership criterion}. If $I$ is prime and \emph{not} the maximal ideal of a $\CC$-semigroup, this function $\lambda_I(\uu)$ is linear, so our membership criterion reduces to checking if $\uu$ lies in a single half-space, \Cref{scored diff powers}, \Cref{C diff powers}, \Cref{scored diff powers prime}, \Cref{C semigroup diff powers prime}. 

We then use our description of differential powers to study the equivalence of the $I$-adic, symbolic, and differential power topologies. The following theorem is stated as \Cref{:t prime containment} below.

\begin{thm*}
Suppose that either $R$ is a scored semigroup ring and $P\subseteq R$ a prime monomial ideal, or $R$ is a $\CC$-semigroup ring and $P\subseteq R$ a \emph{non-maximal} prime monomial ideal. Then there exists some $D>0$ such that for all $n\ge 0$,
\[
P^{(Dn)}\subseteq P^{\langle Dn\rangle}\subseteq P^n.
\]
If $\mm$ is the homogeneous maximal ideal of a $\CC$-semigroup ring $R$, then for $n$ sufficiently large $\mm^{((D+1)n)}\subseteq \mm^{\langle (D+1)n\rangle}\subseteq \mm^n$.
\end{thm*}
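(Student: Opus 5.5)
The first containment $P^{(Dn)}\subseteq P^{\langle Dn\rangle}$ is the general containment $I^{(n)}\subseteq I^{\langle n\rangle}$ recalled in the introduction, so only $P^{\langle Dn\rangle}\subseteq P^n$ needs proof. Since everything is monomial, I will compare exponent sets. By \Cref{scored diff powers prime} and \Cref{C semigroup diff powers prime}, when $P$ is a prime monomial ideal (non-maximal in the $\CC$-semigroup case) there is a linear functional $\ell_P$ with $x^\uu\in P^{\langle n\rangle}$ iff $\ell_P(\uu)\ge n$. Here $\ell_P$ is nonnegative on the semigroup $S$ and vanishes exactly on the face $F$ with $P$ generated by the monomials off $F$. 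Hence the task is to find $D$ with
\[
\uu\in S,\ \ell_P(\uu)\ge Dn \implies x^\uu\in P^n .
\]

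The plan is a decomposition argument on the saturation $\bar S=\R_{\ge0}A\cap\Z^d$. First fix a finite Hilbert basis $\mathbf h_1,\dots,\mathbf h_r$ of $\bar S$. Any $\uu\in\bar S$ can be written $\uu=\sum c_i\mathbf h_i$ with $c_i\in\N$. Let $M=\max_i\ell_P(\mathbf h_i)$. The basis elements with $\ell_P(\mathbf h_i)>0$ lie off $F$. So if $\ell_P(\uu)\ge Dn$, then $\sum_{\mathbf h_i\notin F}c_i\ge Dn/M$. Thus $\uu$ is a sum of at least $Dn/M$ lattice points of $\bar S\setminus F$, plus a remainder in $\bar S$.

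Next I will pass from $\bar S$ back to $S$. For scored and $\CC$-semigroups, $\bar S\setminus S$ is "small": it is finite in the $\CC$ case, and it lies near the boundary faces in the scored case (\Cref{scored defn}). So there is a fixed element $\mathbf w$ in the interior of $S$, deep enough that $\mathbf w+\bar S\subseteq S$; more precisely, $\mathbf h_i+\mathbf w'\in S\setminus F$ for suitable fixed $\mathbf w'$. Grouping the Hilbert basis summands in bundles of fixed size $k$ makes each bundle, or each bundle with a bounded correction, lie in $S\setminus F$, i.e.\ correspond to a monomial of $P$. This writes $x^\uu$ as a product of at least $Dn/(Mk)-c$ elements of $P$ times a monomial of $R$, for a constant $c$. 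Choosing $D$ large (say $D\ge Mk(1+c)$, using $n\ge1$; $n=0$ is trivial) gives $x^\uu\in P^n$. This bundling step is the main obstacle: one must check, case by case from the definitions of scored and $\CC$-semigroups, that the leftover piece of $\uu$ lies in $S$ and not merely in $\bar S$. In the scored case I would try first to use that the missing points lie in bounded-width strips along facets, and that $F$ is a face, so interior translates avoid these gaps.

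For the maximal ideal $\mm$ of a $\CC$-semigroup ring, $\lambda_\mm$ is not linear. Instead I will use \Cref{membership criterion} together with the description in \Cref{C diff powers}. These should show that $\lambda_\mm(\uu)$ differs from a linear functional $\ell(\uu)$ (the order of $\uu$ with respect to the generators of $\bar S$) by at most an additive constant. The same Hilbert basis argument then gives $\ell(\uu)\ge Dn\Rightarrow x^\uu\in\mm^n$ for large $n$, using that $\bar S\setminus S$ is finite so only bounded corrections occur. The extra factor in $(D+1)n$ absorbs the additive constant: $\lambda_\mm(\uu)\ge(D+1)n$ gives $\ell(\uu)\ge (D+1)n-c\ge Dn$ once $n\ge c$, which explains why the statement only holds for $n$ sufficiently large.
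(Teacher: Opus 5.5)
\textbf{There is a genuine gap in the passage from $\overline{\N A}$ back to $\N A$.} Your reduction of $P^{\langle Dn\rangle}\subseteq P^n$ to the half-space criterion $\phi_F(\uu)\ge Dn$ is correct. The step you yourself call the main obstacle is never carried out, and the mechanism you suggest for it is not available as stated.

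Any correction $\mathbf w$ with $\mathbf w+\overline{\N A}\subseteq \N A$ would have to be split off from $\uu$ itself. In the scored case such a $\mathbf w$ must satisfy $h_j(\mathbf w)>\frob(h_j(\N A))$ for every $j$ with $h_j(\N A)\ne\N$. But $h_j(\uu)$ for $j\notin\supp(F)$ can be $0$ or small however large $\phi_F(\uu)$ is, for instance when $\uu$ lies on a proper face of the cone. So the bundles, the remainder and the constant $c$ are never actually controlled, and your argument does not produce a $D$.

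\textbf{The missing idea is that the detour through a Hilbert basis of the saturation is unnecessary.} $\N A$ is itself finitely generated by $\aa_1,\dots,\aa_r$, and $P_F=\langle x^{\aa_i}:\aa_i\notin F\rangle$.
\begin{itemize}
\item Write $\uu=\sum_i c_i\aa_i$ with $c_i\in\N$, and set $t=\sum_{\aa_i\notin F}c_i$. Then $x^\uu\in P_F^t$.
\item Since $\phi_F$ vanishes on $F$, $\phi_F(\uu)=\sum_{\aa_i\notin F}c_i\phi_F(\aa_i)\le D_Ft$, where $D_F=\max_i\phi_F(\aa_i)$.
\item Hence $\phi_F(\uu)>D_F(n-1)$ forces $t\ge n$, so $x^\uu\in P_F^n$.
\end{itemize}
This is the paper's argument. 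It gives $P_F^{\langle D_Fn\rangle}\subseteq P_F^n$ directly, with the explicit constant $D_F$, and there are no gaps to avoid.

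\textbf{For $\mm$ in the $\CC$-semigroup case,} your additive-constant idea is right, but the linear functional is $\phi_0$. An ``order with respect to generators of $\overline{\N A}$'' is not linear. By \Cref{C semigroup diff powers prime} and \Cref{theta lemma}, $\lambda_\mm(\uu)=\phi_0(\uu)+\sepdeg(\uu,\Theta(-\uu))$ with $0\le\sepdeg(\uu,\Theta(-\uu))\le\gamma=|\gaps(\N A)|$. So $\lambda_\mm(\uu)\ge(D_0+1)n$ gives $\phi_0(\uu)\ge D_0n$ once $n\ge\gamma$. The same decomposition over the generators $\aa_i$, all of which lie outside $\{0\}$, then yields $x^\uu\in\mm^n$.
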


In the above theorem, $D$ is an explicit integer we can explicitly calculate in terms of the generators of our semigroup ring and our \emph{support function}, \Cref{support function}. In particular $D$ is independent of the ideals $I$ in our following statements. We also show this constant $D$ cannot be taken to be smaller, giving an analogous result for symbolic powers, \cite{Wal18}. 

We use this to show the following, written as \Cref{cofinality theorem} below.

\begin{thm*}
If $R$ is a scored semigroup ring or $\CC$-semigroup ring and $I\subseteq R$ a monomial ideal, there exists a constant $C>0$ such that 
\[
I^{(Cn)}\subseteq I^{\langle Cn\rangle}\subseteq I^n.
\]
\end{thm*}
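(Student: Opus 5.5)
The plan is to reduce the statement for an arbitrary monomial ideal $I$ to the prime case, \Cref{:t prime containment}, using primary decompositions of monomial ideals together with the multiplicative properties of differential powers. The first containment $I^{(Cn)}\subseteq I^{\langle Cn\rangle}$ is the general containment $I^{(m)}\subseteq I^{\langle m\rangle}$ recalled in the introduction, so only $I^{\langle Cn\rangle}\subseteq I^n$ needs proof.

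\textbf{Step 1 (reduce to $P$-primary components).} In an affine semigroup ring every monomial ideal has an irredundant primary decomposition $I=Q_1\cap\dots\cap Q_r$ into monomial ideals, with each $Q_i$ primary to a monomial prime $P_i$. Differential powers commute with intersections: $\bigl(\bigcap Q_i\bigr)^{\langle m\rangle}=\bigcap Q_i^{\langle m\rangle}$, straight from the definition via differential operators of order $<m$. So it suffices to find, for each $i$, a constant $C_i$ and an integer $N_i$ with $Q_i^{\langle C_i n\rangle}\subseteq Q_i^{(n)}$, the $Q_i$-primary symbolic power; then $I^{\langle Cn\rangle}\subseteq I^{(n)}$ with $C=\max C_i$. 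Separately we need $I^{(n)}\subseteq I^{n'}$ with $n'$ linear in $n$. For this I would use the monomial structure directly rather than general theory: $I$ lies between $P_i^{a}$-type ideals.

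\textbf{Step 2 (compare with prime powers).} Since $Q_i$ is $P_i$-primary and monomial, there is a $k$ with $P_i^{k}\subseteq Q_i\subseteq P_i$. Let $J=\sqrt I=\bigcap P_i$ restricted to minimal primes; more simply, pick $k$ with $\prod_i P_i^{k}\subseteq I$ (e.g.\ $(\sqrt I)^k\subseteq I$ fails to control embedded components, so use the product of all associated primes). Then
\[
I^{\langle m\rangle}\subseteq \bigcap_i Q_i^{\langle m\rangle}\subseteq \bigcap_i P_i^{\langle m\rangle}.
\]
By \Cref{:t prime containment}, $P_i^{\langle (D+1)n\rangle}\subseteq P_i^{n}$ for $n\gg0$ (using $D+1$ covers both the scored and $\CC$ cases, adjusting small $n$ by enlarging the constant). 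The obstacle is that $\bigcap_i P_i^{n}$ is not contained in $I^{n/k}$ in general; intersections of powers do not give powers of the product. To get around this, I would instead work with the monomial membership criterion \Cref{membership criterion}: $\uu\in I^{\langle m\rangle}$ iff $\lambda_I(\uu)\ge m$. The key estimate to establish is a linear lower bound comparing $\lambda_I$ with the "$I$-adic order" $\nu_I(\uu)=\max\{n:x^{\uu}\in I^n\}$, namely $\nu_I(\uu)\ge \lambda_I(\uu)/C-c_0$ for constants $C,c_0$.

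\textbf{Step 3 (the main obstacle: linear comparison of $\lambda_I$ and $\nu_I$).} Asymptotically $\nu_I(\uu)$ is governed by the Newton-type polyhedron of $I$ (the convex hull of exponents, up to the finitely many holes of the non-normal semigroup), and $\nu_I$ is, up to bounded error, a piecewise linear concave function $\min_j \ell_j(\uu)$ over facets of the Rees-algebra cone. I expect $\lambda_I$ to be built from the support functions of \Cref{support function} and to be bounded above by a piecewise linear function vanishing exactly where $\nu_I$ grows sublinearly, i.e.\ both vanish on the same faces (those not meeting $I$). Two nonnegative piecewise-linear functions on a polyhedral cone with the same zero locus are linearly comparable by compactness of a cross-section, giving $\lambda_I\le C\,\nu_I + c_0$. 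The bounded error $c_0$ (from gaps of the semigroup and integral closure versus actual powers, handled by Brian\c{c}on--Skoda-type finiteness: $\overline{I^{n+c}}\subseteq I^n$ for Noetherian Rees algebras) is absorbed by enlarging $C$ so that the statement holds for all $n\ge 0$, noting the $n=0$ case is trivial. Verifying that the zero loci agree, using the explicit form of $\lambda_I$ and the prime case as the model, is where I expect the real work.
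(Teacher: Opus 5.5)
\textbf{The gap.} Your proposal never actually proves $I^{\langle Cn\rangle}\subseteq I^n$.

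Step 1's reduction does not follow as stated. The componentwise symbolic powers do not recombine: $\bigcap_i Q_i^{(n)}\not\subseteq I^{(n)}$ in general when $I$ has embedded components. For example, $I=(x^2,xy)=(x)\cap(x^2,y)\subseteq\k[x,y]$ has $x^2y\in (x)^{(2)}\cap(x^2,y)^{(2)}$ but $x^2y\notin I^{(2)}=I^2$. Moreover, the linear comparison $I^{(n)}\subseteq I^{n'}$ that you set aside is essentially the whole difficulty.

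Step 2 correctly reaches $I^{\langle m\rangle}\subseteq\bigcap_i P_i^{\langle m\rangle}$. It also correctly explains why a fixed decomposition of $I$ is not enough. Step 3 then only states the decisive estimate $\lambda_I\le C\nu_I+c_0$ as an expectation.

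The idea the paper uses at exactly this point is to decompose $I^n$ rather than $I$. By \cite[Theorem 3.4]{Swa97} there is a single $k$ such that, for every $n$, one can write $I^n=\bigcap_i Q_{n,i}$ with $P_i^{kn}\subseteq Q_{n,i}$, where $P_i=\sqrt{Q_{n,i}}$. Each $P_i$ is an associated prime of the monomial ideal $I^n$, so it is one of finitely many monomial primes, and $I\subseteq P_i$. Let $C'$ be an integer for which \Cref{:t prime containment} gives $P^{\langle C'n\rangle}\subseteq P^n$ for every monomial prime $P$ and all $n$; take the largest $D_F$, together with $D_0+|\gaps(\N A)|$ in the $\CC$-semigroup case. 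Then
\[
I^{\langle C'kn\rangle}\subseteq\bigcap_i P_i^{\langle C'kn\rangle}\subseteq\bigcap_i P_i^{kn}\subseteq\bigcap_i Q_{n,i}=I^n .
\]

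\textbf{Completing Step 3.} Your Step 3 can in fact be finished, and it would give a genuinely different proof that avoids Swanson's theorem. Each ingredient has to be supplied, as follows.
\begin{enumerate}
\item By \Cref{scored diff powers} and \Cref{C diff powers}, $\lambda_I=\min_k\lambda_{Q_k}$ over the monomial primary components $Q_k$, with $\sqrt{Q_k}=P_{F_k}$. Then \Cref{scored diff powers bounds} and \Cref{C semigroup diff powers bounds} give $\lambda_I\le f+\gamma$, where $f=\min_k\phi_{F_k}$ and $\gamma=|\gaps(\N A)|$ ($\gamma=0$ in the scored case).
\item On $\R_{\ge0}A$ the zero set of $f$ is $\bigcup_k F_k$. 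A face $G$ misses the exponents of $I$ iff $I\subseteq P_G$ iff $G\subseteq F_k$ for some minimal prime $P_{F_k}$. Hence $f$ has the same zero set as $\bar\nu_I(\uu)=\max\{t:\uu\in t\,\mathrm{NP}(I)\}$, where $\mathrm{NP}(I)=\operatorname{conv}(\text{exponents of }I)+\R_{\ge0}A$.
\item Both functions are concave and piecewise linear. Comparing them on the extremal rays of a common polyhedral refinement gives $f\le C\bar\nu_I$.
\item The Rees algebra $R[It]$ is an affine semigroup ring, so its integral closure $\bigoplus_j\overline{I^j}t^j$ in $R[t]$ is module-finite over it. This gives $\overline{I^{n+c}}\subseteq I^n$; it is Rees-type finiteness, not Brian\c{c}on--Skoda.
\item Also, $x^\uu\in\overline{I^m}$ whenever $\uu\in\N A\cap m\,\mathrm{NP}(I)$.
\end{enumerate}
Together these give $I^{\langle C(n+c)+\gamma\rangle}\subseteq I^n$.

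\textbf{Comparison.} This route replaces the inexplicit Swanson constant $k$ by data read off from $\mathrm{NP}(I)$ and the support functions, at the cost of the integral-closure machinery. The paper's route is a three-line reduction to the prime case. As submitted, though, these steps are only anticipated, so the proposal is not yet a proof.
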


Turning to the existence of differential signature, we use our description of differential powers to show that $\{I^{\langle n\rangle}\}$ is a \emph{graded family} for all radical ideals $I$ in a scored semigroup ring, \Cref{scored graded diff powers}, correcting an erroneous claim made in \cite[Example 4.35]{BJNB19}. Finally we show that, regardless of gradedness, the differential signature of both $\CC$-semigroup rings and scored semigroup rings exist and can be expressed as the volume of a certain polytope, \Cref{diff sig}. 

We end by applying our results to \emph{pinched Veronese rings}, as in many cases these rings are isomorphic to either $\CC$-semigroups or scored semigroups, \Cref{pinched veronese containment}.

\section{Affine Semigroups}

Given a $d\times r$ matrix $A$ with integer entries, we let $\N A\subseteq \N^d$ denote the (affine) semigroup generated by the columns of $A$ which we denote by $\aa_1,\cdots,\aa_r$. Given an affine semigroup $\N A$ and a field $\k$, the associated semigroup ring, $\k[\N A]$, is the $\k$-algebra $
\k[\N A]=\k[x^\uu:\uu\in \N A]=\k[x^{\aa_1},\cdots,x^{\aa_r}]$.
Going forward we will assume that $A$ has full rank, (so $d\le r$) and $\aa_1,\cdots,\aa_r$ are $\N$-linearly independent.

Let $\Z A$ denote the $\Z$-span of the columns of $A$. If
$\Sigma$ is the Smith normal form of $A$ then $\Z A\simeq \Z \Sigma=\simeq n_1 \Z \oplus \cdots \oplus n_d \Z$, for some positive integers $n_d$. Multiplying each summand by $\frac{1}{n_i}$ preserves the isomorphism type of $\Z A$, so we can assume without loss of generality that $\Z A=\Z^d$. 

Consider the (rational polyhedral) cone $\R_{\ge 0} A=\{\sum_i \alpha_i \aa_i:\alpha_i \in \R_{\ge 0}\}$. A \emph{face} of $\R_{\ge 0} A$ is of the form $\R_{\ge 0}A\cap \HH$ for some hyperplane $\HH$. In particular a face is closed under nonnegative scalar multiplication and addition. A \emph{facet} is a face of codimension 1, and we let $\HH_1,\cdots,\HH_m$ be the hyperplanes in $\R^d$ such that $\R_{\ge 0}A\cap \HH_1,\cdots,\R_{\ge 0}\cap \HH_m$ are the facets of $\R_{\ge 0}A$. For each facet $\R_{\ge 0}A\cap \HH_i$ there exists a unique linear form $h_i$ such that
\[
h_i(\HH_i)=0, \ \  h_i(\Z A)=\Z, \text{ and }\ h_i(\R_{\ge 0} A)\ge 0.
\]
The coefficients of this $h_i$ can be obtained by taking the coefficients of some vector $\vv$ perpendicular to $\HH_i$ and scaling appropriately, in which case $h_i(\uu)=\langle \vv,\uu\rangle$. We call $h_1,\cdots,h_m$ the \emph{(primitive integral) defining functionals} of both $\R_{\ge 0} A$ and $\N A$, and $\HH_1,\cdots,\HH_m$ the \emph{defining hyperplanes} of both $\R_{\ge 0}A$ and $\N A$. In the literature, these defining functionals are often defined as a generating set of the dual cone of $\R_{\ge 0}A$.

Any face of $\R_{\ge 0}A$ is the intersection of the facets that contain $F$. That is, $F=\{\uu\in\R_{\ge 0} A: h_i(\uu)=0 \text{ for all } \HH_i\supseteq F\}$. The \emph{support} of a face $F$ is the set $\supp(F)=\{i\in\N: \HH_i\supseteq F\}$. 

A face of the semigroup $\N A$ is of the form $F\cap \N A$ where $F$ is a face of $\R_{\ge 0} A$. 
Going forward we will abuse notation and let $F$ denote either a face of $\N A$ or $\R_{\ge 0} A$ depending on the context. 

\begin{defn}
\label[defn]{support function}
For some face $F$ of $\N A$ we let the \emph{support function} of $F$, denoted by $\phi_F$, be the linear functional $
\phi_F=\sum_{i\in \supp(F)} h_i.
$
\end{defn}

If $\uu\in \N A$ then $\phi_F(\uu)=0$ if and only if $\uu\in F$. Further, $\phi_{\HH_i}=h_i$ and $\phi_{\{0\}}=\phi_{0}=h_1+\cdots+h_m$. 

\begin{rem}
\label[rem]{big supp function}
Let $\N A$ be a semigroup with defining functionals $h_1,\cdots,h_m$, and $F$ a proper face of $\N A$. We claim that for all collections of integers $\{N_i:i\notin \supp(F)\}$ there exists $\xx\in F$ such that $h_i(\xx)>N_i$. Indeed, this statement is vacuously true if $F=\{0\}$. Now suppose there exists some $\HH_i\not\supseteq F$. For each $\HH_i\not\supseteq F$ choose $\xx_i\in F\setminus \HH_i$ and let $\xx=\sum_{i\notin\supp(F)} \xx_i$. Then $h_i(\xx)\ge 1$ so $h_i(N\xx)=Nh_i(\xx)\ge N$ for any integer $N$, while $N\xx$ still lies in $F$. 
\end{rem}

The \emph{saturation} of $\N A$, denoted by $\overline{\N A}$, is $\overline{\N A}=\R_{\ge 0} A\cap \Z^d$. If $\N A=\overline{\N A}$ we say $\N A$ is \emph{normal}. Thus if $\N A$ is normal there exist linear forms $h_1,\cdots,h_m$ such that $\displaystyle{
\N A=\bigcap_{i=1}^m\{\uu\in\Z^d: h_i(\uu)\ge 0\}.}$ A well-known fact is that the normalization of $\k[\N A]$ is $\k[\overline{\N A}]$, justifying the use of the term ``normal". 

We now define the two classes of affine semigroups we will study in this article, both obtained by removing relatively few points from their saturation.

\begin{defn}
\label[defn]{scored defn}
A semigroup $\N A$ is \emph{scored} if $\overline{\N A}\setminus \N A$ consists finitely many hyperplane sections of $\overline{\N A}$ parallel to (but not equal to) the defining hyperplanes of $\overline{\N A}$. 

A semigroup $\N A$ is a $\CC$-semigroup if $\overline{\N A}\setminus \N A$ is a finite set which we denote by $\gaps(\N A)$.
\end{defn}

Suppose $\dim(\N A)=1$, so $\overline{\N A}=\N$. Then the defining hyperplane of $\N A$ is the point at the origin, so scored semigroups and $\CC$-semigroups both refer to subsemigroups $\N A$ of $\N$ such that $|\N\setminus \N A|<\infty$. These one dimensional semigroups are called \emph{numerical semigroups}. The \emph{Frobenius number}, denoted by $\frob(\N A)$, is the maximum integer not in $\N A$. 

\subsection{Monomial Ideals of Semigroup Rings}

For the rest of this article, all ideals $I\subseteq \k[\N A]$ will be monomial ideals. We define the \emph{standard points of $I$}, denoted by $\std(I)$, to be $\std(I)=\{\uu\in \N A:x^\uu\notin I\}$. We may write important properties of $I$ in terms of $\std(I)$. For example, $I\subseteq \k[\N A]$ is prime if and only if $\std(I)$ is a face of $\N A$. Thus we will write $P_F$ to be the monomial prime ideal whose set of standard points is $F$. If $I$ is a monomial ideal then so are its associated primes, hence we can write $\ass(I)=\{P_{F_1},\cdots,P_{F_s}\}$ where $F_1,\cdots,F_s$ are faces of $\N A$, \cite[Lemma 7.10 and Proposition 8.11]{MS05}

\begin{thm}
\label[thm]{primary monomial char}
$I$ is $P_F$-primary if and only if there exist $\cc_1,\cdots,\cc_\ell$ such that 
\[
\std(I)=\bigcup_{i=1}^\ell (\cc_i+\Z F)\cap \N A,
\]
\end{thm}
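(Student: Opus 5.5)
We prove both directions by translating primary decomposition into combinatorics of $\std(I)$, using that $I$ is $P_F$-primary exactly when $\ass(I)=\{P_F\}$. By the cited \cite[Lemma 7.10, Proposition 8.11]{MS05}, associated primes of a monomial ideal are monomial primes of the form $P_G = \operatorname{ann}(x^\uu)$ for some monomial $x^\uu\notin I$, i.e.\ $P_G=(I:x^\uu)$ is prime. Throughout, the basic observation is: for $\uu\in\std(I)$, the set of $\vv\in\N A$ with $\uu+\vv\in\std(I)$ is the set of standard points of $(I:x^\uu)$. We will also use that $\std(I)$ is closed downward in the sense that $\uu+\vv\in\std(I)$ implies $\uu\in\std(I)$.

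($\Leftarrow$) Suppose $\std(I)=\bigcup_i(\cc_i+\Z F)\cap\N A$ with $\ell$ minimal. First we show $\sqrt I=P_F$. If $\uu\in F$ then $n\uu\in F$ for all $n$, and we must check $x^{n\uu}\notin I$. Here we first note $0\in\std(I)$, since $I$ is proper. So $0\in \cc_j+\Z F$ for some $j$, i.e.\ $\Z F\cap\N A\subseteq\std(I)$. Since $F\subseteq \Z F\cap \N A$, this gives $F\cap \sqrt I=\emptyset$ at the level of monomials. Conversely, if $\uu\notin F$ then some $h_k$ with $k\in\supp(F)$ has $h_k(\uu)>0$, while $h_k$ is constant on each coset $\cc_i+\Z F$. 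Hence $h_k(n\uu)$ exceeds all values $h_k(\cc_i)$ for $n\gg0$, so $n\uu\notin\std(I)$, i.e.\ $x^{n\uu}\in I$. Thus $\sqrt I=P_F$.

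Next, for primariness it suffices to show every zero divisor on $R/I$ is nilpotent, i.e.\ for monomials: if $x^\vv x^\uu\in I$ with $x^\uu\notin I$, then $\vv\notin F$. Suppose instead $\vv\in F$ and $\uu\in\cc_i+\Z F$. Then $\uu+\vv\in\cc_i+\Z F$ and lies in $\N A$, so $\uu+\vv\in\std(I)$, a contradiction. Since $I$ is monomial, associated primes are monomial, so this monomial check suffices to conclude $\ass(I)=\{P_F\}$.

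($\Rightarrow$) Suppose $I$ is $P_F$-primary. We show $\std(I)$ is a union of sets $(\cc+\Z F)\cap\N A$ with $\cc\in \std(I)$, and that finitely many suffice. Saturation under $\Z F$ is the main obstacle. Given $\uu\in\std(I)$ and $\ww\in\Z F$ with $\uu+\ww\in\N A$, we need $\uu+\ww\in\std(I)$. Write $\ww=\ff_1-\ff_2$ with $\ff_i\in F$, possible since $\Z F$ is generated by $F$. Primariness gives $\uu+\ff_1\in\std(I)$, since otherwise $x^{\ff_1}$ would be a non-nilpotent zero divisor, as $\ff_1\notin\sqrt I$. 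Now $\uu+\ff_1=(\uu+\ww)+\ff_2$, and downward closure yields $\uu+\ww\in\std(I)$. Hence $\std(I)=\bigcup_{\cc\in\std(I)}(\cc+\Z F)\cap\N A$.

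For finiteness, the functional $\phi_F$ is constant on cosets of $\Z F$ and bounded on $\std(I)$. It is bounded because some power $P_F^N\subseteq I$, and every $\uu\notin F$ has $\phi_F(\uu)\ge1$, so large $\phi_F$ forces $\uu$ to be a sum of many elements outside $F$. Making this precise is the delicate step. I would use Remark \ref{big supp function} together with finite generation of $\N A$: write $\uu=\sum n_j\aa_j$. Then $\phi_F(\uu)\le \max_j\phi_F(\aa_j)\sum_{\aa_j\notin F}n_j$, so large $\phi_F(\uu)$ forces at least $N$ generators outside $F$, and hence $x^\uu\in P_F^N\subseteq I$. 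Finally, cosets of $\Z F$ meeting the bounded-$\phi_F$ region of $\N A$ are finite in number. Indeed, the projection $\Z^d\to\Z^d/\Z F$ sends this region into the bounded part of a pointed cone in the quotient lattice, modulo torsion, which contains only finitely many lattice points. This gives finitely many $\cc_i$.
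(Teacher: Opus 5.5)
The paper gives no argument of its own for this statement. It quotes it as a corollary of a result on primary decomposition of binomial ideals \cite[Proposition 2.8]{Mil10}, and refers to \cite[Theorem 3.8 (2)]{MY22} for a proof restricted to monomial ideals. Your proof is correct and self-contained, and so genuinely different from what the paper does.

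\begin{itemize}
\item For $(\Leftarrow)$ you need only two facts: every associated prime of a monomial ideal has the form $(I:x^\uu)$ with $\uu\in\std(I)$, and each $h_k$ with $k\in\supp(F)$ is constant on cosets of $\Z F$.
\item For $(\Rightarrow)$ you get the $\Z F$-saturation of $\std(I)$ by writing elements of $\Z F$ as differences of elements of $F$. You then combine primariness with the fact that $\uu+\vv\in\std(I)$ forces $\uu\in\std(I)$.
\item Finiteness comes from $P_F^N\subseteq I$.
\end{itemize}

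The citation places the result inside the general theory of binomial ideals. Your route uses nothing beyond graded associated primes and Noetherianity. It also proves along the way that $\phi_F(\std(I))$ is bounded, which the paper later deduces from this theorem in \Cref{finite support function}.

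Two small remarks. First, in $(\Leftarrow)$ you cannot assume $I$ is proper; it has to follow from the union being nonempty. For an empty union $\std(I)=\emptyset$, so $I=R$, which is not primary.

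Second, your finiteness step can avoid the quotient cone and the torsion of $\Z^d/\Z F$ entirely, and \Cref{big supp function} is not needed. If $\uu=\sum_j n_j\aa_j\in\std(I)$, then $\sum_{\aa_j\notin F}n_j<N$, since otherwise $x^\uu\in P_F^N\subseteq I$. Hence $\uu$ lies in one of the finitely many translates $\sum_{\aa_j\notin F}n_j\aa_j+\Z F$.
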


\Cref{primary monomial char} is presented as a corollary to a statement about primary decomposition of \emph{binomial} ideals \cite[Proposition 2.8]{Mil10}. For a proof that restricts itself to monomial ideals, we refer readers to \cite[Theorem 3.8 (2)]{MY22}.

\begin{rem}
\label[rem]{finite support function}
If $I$ is $P_F$-primary then $\phi_F(\std(I))$ is a finite set (of nonnegative integers). Indeed, if we take $\std(I)$ as in \Cref{primary monomial char}, then $\phi_F(\std(I))=\{\phi_F(\cc_1),\cdots,\phi_F(\cc_\ell)\}$.
\end{rem}

We now consider $\std(I)$ when $\N A$ is a numerical semigroup. If $0\ne a\in \N A$, then $\std(x^a\k[\N A])=\N A\setminus (a+\N A)$ (also known as the \emph{Ap\'{e}ry set of $\N A$ relative to $a$}) consists of the least elements of $\N A$ congruent to $i\mod a$ for each $0\le i<a$. In particular, $|\std(x^a\k[\N A])|=a$, \cite[Lemma 2.4]{GSR09}. 

In contrast, if $a\in \Z\setminus \N A$, we can view $x^a\k[\N A]$ as a \emph{fractional ideal} of $\k[\N A]$ and still discuss its set of standard points in $\N A$. If $a\in \N\setminus \N A$ then $\std(x^a\k[\N A])=\N A\setminus (a+\N A)$ contains the least elements of $\N A$ in each congruence class modulo $a$, though there will be other standard points as well. We summarize this discussion below.

\begin{lemma}
\label[lemma]{apery lemma}
If $\N A$ is a numerical semigroup and $a\in \Z$, then 
\[
|\std(x^a\k[\N A])|=|\N A\setminus(a+\N A)|\ge a,
\]
with equality holding if $a\in \N A$. 
\end{lemma}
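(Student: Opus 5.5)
The plan is to reduce the count to residue classes modulo $|a|$. First note that $\std(x^a\k[\N A])$ means the set of $\uu\in\N A$ with $x^\uu\notin x^a\k[\N A]$. A monomial $x^\uu$ lies in the fractional ideal $x^a\k[\N A]$ exactly when $\uu-a\in\N A$. So $\std(x^a\k[\N A])=\N A\setminus(a+\N A)$, which is the equality in the statement. The remaining work is the inequality, plus equality in the case $a\in\N A$. If $a\le 0$, the inequality $|\cdot|\ge a$ is trivial, since cardinalities are nonnegative; the set might even be infinite when $a<0$, but that only helps. So I assume $a>0$ and split into residue classes modulo $a$.

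For the inequality, fix $0\le i<a$. Since $\N\setminus\N A$ is finite, the class $\{n\in\N A: n\equiv i \mod a\}$ is nonempty, so it has a least element $w_i$. I claim $w_i\notin a+\N A$. Otherwise $w_i-a\in\N A$, and $w_i-a\equiv i\mod a$ with $w_i-a<w_i$, contradicting minimality. The $w_i$ are pairwise distinct, being in distinct residue classes. This gives $a$ distinct elements of $\N A\setminus(a+\N A)$, so $|\N A\setminus(a+\N A)|\ge a$. The argument does not use $a\in\N A$, so it covers $a\in\N\setminus\N A$ as well.

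For equality when $a\in\N A$ (and $a>0$; if $a=0$ the set is empty, which is consistent), I show every element of $\N A\setminus(a+\N A)$ is one of the $w_i$. Let $\uu\in\N A$ with $\uu\equiv i\mod a$ and $\uu\neq w_i$. Then $\uu>w_i$ and $\uu-w_i$ is a positive multiple $ka$ of $a$. Hence $\uu-a=w_i+(k-1)a\in\N A$, because $\N A$ is closed under addition and $a\in\N A$. So $\uu\in a+\N A$. Therefore $\N A\setminus(a+\N A)=\{w_0,\dots,w_{a-1}\}$, which has exactly $a$ elements; this recovers \cite[Lemma 2.4]{GSR09}.

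None of these steps is a serious obstacle. The only points needing care are the two degenerate cases: $a\le 0$, where the bound is vacuous and the standard set may be infinite, and $a\notin\N A$, where the last step fails. In that case $w_i+(k-1)a$ need not lie in $\N A$, which is exactly why equality can fail there.
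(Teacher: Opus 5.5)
Your argument is correct and is essentially the paper's: the paper identifies $\std(x^a\k[\N A])$ with $\N A\setminus(a+\N A)$. It then cites \cite[Lemma 2.4]{GSR09} for the fact that when $a\in\N A$ this set consists exactly of the least elements of $\N A$ in each residue class modulo $a$, and notes that for $a\in\N\setminus\N A$ those least elements are still standard points; you prove both facts directly and also dispose of the trivial case $a\le 0$. One harmless slip: the set is never infinite for $a<0$, since $\uu-a\in\N A$ as soon as $\uu-a>\frob(\N A)$, but nothing in your proof relies on that remark.
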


\begin{ex}
Let $A=\m{ 2 & 3}$ so $\N A=\langle 2,3\rangle=\{0,2,3,4,\cdots\}$ and $\k[\N A]=\k[x^2,x^3]$. We have $\gaps(\N A)=\{1\}$, $\frob(\N A)=1$, and
\[
\std(x^3\k[\N A])=\{0,2,4\}, \quad \std(x\k[\N A])=\{0,2\}, \quad \std(x^{-1}\k[\N A])=\{0\},
\]
and $\std(x^{-a}\k[\N A])=\emptyset$ for all $a\ge 2$, verifying \Cref{apery lemma}.\end{ex}

If $\N A$ is scored then $h_i(\N A)$ is a numerical semigroup for every defining functional $h_i$. Thus much study of scored semigroups boil down to studying numerical semigroups.

\begin{ex}
\label[ex]{running ex 1}
Consider the semigroups defined by the matrices
\[
A=\m{ 2 & 1 & 1\\ 1 & 1 & 2} , \quad B=\m{3 & 2 & 2 & 1\\2 & 2 & 1 & 2} ,\quad C=\m{5 & 4 & 3 & 3 & 3 & 2 & 1 \\ 10 & 8 & 6 & 5 & 4 & 1 & 1}.
\]
Also consider the ideals $I=\langle x^2y^2,x^3y^2,x^4y^2\rangle$ of $\k[\N A]$ and $J=\langle x^2y^2,x^3y^2,x^4y^2\rangle$ of $\k[\N B]$. In \Cref{fig:normal_ex} and \Cref{fig:scored_ex} the elements of $\std(I)\subseteq \N A$, $\std(J)\subseteq \N B$ and $\N C$ are drawn using black dots, while $I$ and $J$ are represented by blue squares. In \Cref{fig:C_ex} the points of $\N C$ are drawn in black dots with the gaps denoted by open dots.

\begin{figure}[ht]

\begin{subfigure}{0.3\textwidth}
\includegraphics[alt={Three pictures of lattice points, each describing an affine semigroups in the example.},width=0.85\linewidth, height=3.75cm]{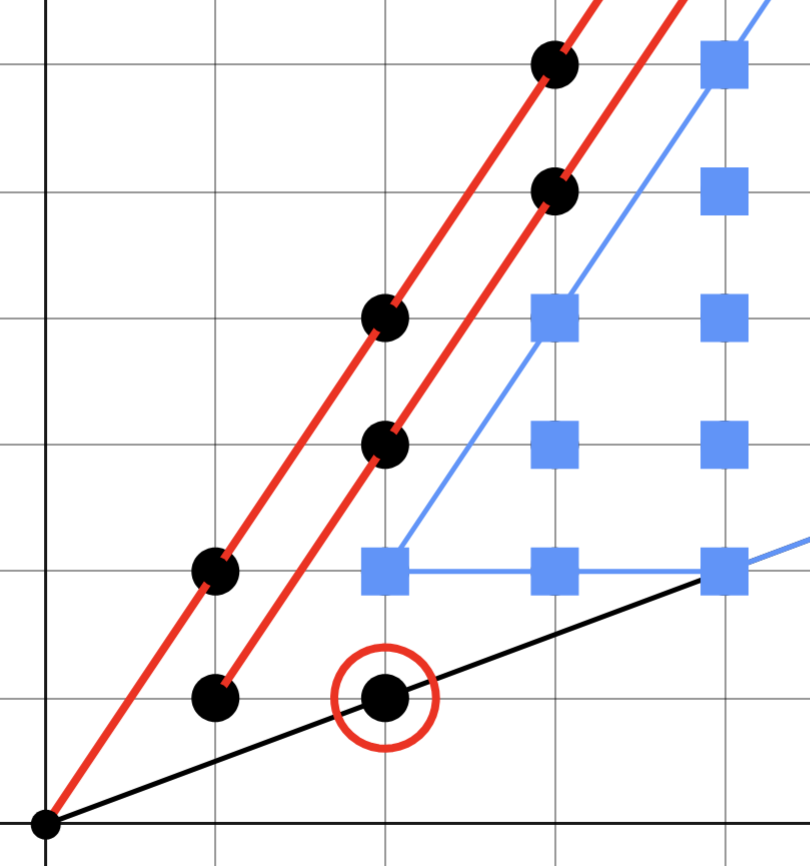} 
\caption{$I\subseteq \k[\N A]$,\\ $\N A$ normal }
\label{fig:normal_ex}
\end{subfigure}
\begin{subfigure}{0.3\textwidth}
\includegraphics[width=0.85\linewidth, height=3.75cm]{normal_ex_ideal} 
\caption{$J\subseteq \k[\N B]$,\\ $\N B$ scored }
\label{fig:scored_ex}
\end{subfigure}
\begin{subfigure}{0.3\textwidth}
\includegraphics[width=0.85\linewidth, height=3.75cm]{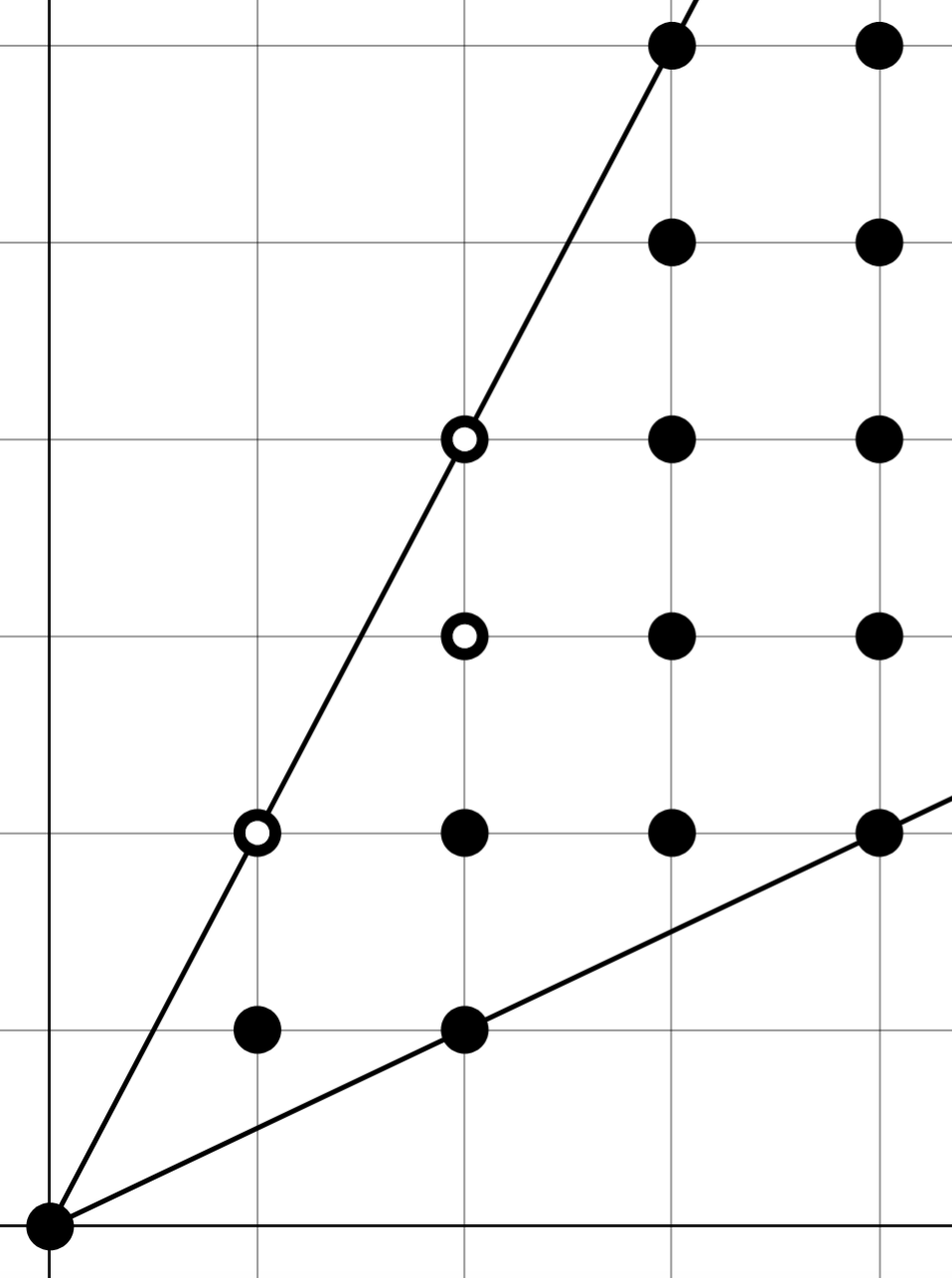} 
\caption{$\k[\N C]$,\\ $\N C$ a $\CC$-semigroup}
\label{fig:C_ex}
\end{subfigure}

\caption{Examples of ideals in two dimensional affine semigroups.}
\label{fig:image1}
\end{figure}

The semigroup $\N A$ is normal with defining functionals $h_1(x,y)=2x-y$ and $h_2(x,y)=2y-x$. We may also write $\N A=\{\uu\in \Z^2:h_1(\uu)\ge 0,\text{ and }h_2(\uu)\ge 0\}$. 

The semigroup $\N B$ is scored, where $\overline{\N B}\setminus \N B=\{\uu:h_1(\uu)=1\}$. Thus $h_1(\N B)=\N\setminus \{1\}=\langle 2,3\rangle$ and $h_2(\N B)=\N$. The semigroup $\N C$ is a $\CC$-semigroup, where $\gaps(\N C)=\{(1,2),(2,3),(2,4))\}$. It is evident that $\N A$ is the saturation of both $\N B$ and $\N C$. 

Turning to monomial ideals, if we let $F_0=\{0\}$, $F_1=\R_{\ge 0}(1,2)^T$ and $F_2=\k (2,1)^T$ then the monomial prime ideals of $\k[\N A]$ are
\[
P_{F_0}=\langle x^2y,xy,xy^2\rangle=\mm, \quad P_{F_1}=\langle xy,x^2y\rangle, \quad P_{F_2}=\langle xy^2,xy\rangle.
\]
We can write $\std(I)=((0,0)+F_1)\cup ((1,1)+F_1)\cup((2,1)+F_0)$
which shows that $I$ is not primary and $\ass(I)=\{P_{F_1},P_{F_0}\}$. Also, $\phi_{F_1}(\std(I))$ is finite which shows the converse to \Cref{finite support function} is false. On the other hand, we can write $\std(J)=((0,0)+F_1)\cup((2,1)+F_1)$, also illustrated with red lines in \Cref{fig:scored_ex}, confirming that $J$ is $P_{F_1}$-primary by \Cref{primary monomial char}.
\end{ex}

\section{Differential Operators}

\subsection{Differential Powers}

Given a field $\k$ and a $\k$-algebra $R$, we can define the ring of \emph{$\k$-linear differential operators over $R$}, $\DD_{R|\k}\subseteq \endo_\k(R)$, inductively as follows: 

\begin{itemize}

\item Elements in $\DD_{R|\k}^0$ are multiplication by elements in $R$. We abuse notation and let $r\in \DD_{R|\k}^0$ denote the ``multiplication by $r$" map. 

\item For $n> 0$  
\[
\DD_{R|\k}^{n}:=\{\delta\in \endo_\k(R):[\delta,r]\in \DD_{R|\k}^{n-1}\text{ for all } r\in \DD_{R|\k}^0\},
\]
where $[\delta,r]=\delta r-r\delta$ is the commutator of $\delta$ and $r$.

\end{itemize}

We say $\DD_{R|\k}^n$ is the set of operators of \emph{order at most} $n$, and define
\[
\DD_{R|\k}=\DD_R:=\bigcup_{n\ge 0} \DD_{R|\k}^n.
\]

One can show that the collection $\{\DD_{R|\k}^n\}_{n\ge 0}$ forms an ascending filtration for $\DD_{R|\k}$.

\begin{defn}
Let $\k$ be a field, $R$ a finitely generated $\k$-algebra and $I\subseteq R$ an ideal. We declare $I^{\langle 0\rangle}=R$ and for $n\ge 1$ the \emph{$n^{th}$ differential power of $I$} is
\[
I^{\langle n\rangle_\k}=I^{\langle n\rangle}=\{f:\DD_{R|\k}^{n-1}\cdot f\subseteq I\}.
\]
\end{defn}

\begin{lemma}
\label[lemma]{differential lemma}
Let $R$ be a finitely generated $\k$-algebra and $I$ an ideal of $R$. Then for any $n\ge 0$,
\begin{enumerate}

\item $I^n\subseteq I^{\langle n\rangle}$

\item $\left(\bigcap_{\alpha\in \AA} I_\alpha\right)^{\langle n\rangle}=\bigcap_{\alpha\in \AA} (I_\alpha)^{\langle n\rangle}$.

\item If $n\ge 1$ and $I$ is $P$-primary then so is $I^{\langle n\rangle}$. 
\end{enumerate}
\end{lemma}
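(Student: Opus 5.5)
We prove the three parts separately; each uses only the definition of $\DD_{R|\k}^{n-1}$ and standard properties of the order filtration.

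For (1), we prove by induction on $n$ the slightly stronger statement that for every $\delta\in\DD_{R|\k}^{k}$ with $k<n$ we have $\delta(I^n)\subseteq I^{n-k}$. Taking $k=n-1$ then gives $\DD^{n-1}_{R|\k}\cdot I^n\subseteq I$. The case $k=0$ is clear, since $\DD^0$ consists of multiplications. For the inductive step, take $f=r g$ with $r\in I$ and $g\in I^{n-1}$. Then
\[
\delta(rg)=r\,\delta(g)+[\delta,r](g).
\]
By induction $\delta(g)\in I^{n-1-k}$, so $r\delta(g)\in I^{n-k}$. Also $[\delta,r]\in\DD^{k-1}$, so $[\delta,r](g)\in I^{(n-1)-(k-1)}=I^{n-k}$. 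Additivity of $\delta$ extends this to all of $I^n$. The case $n=0$ holds by the convention $I^{\langle 0\rangle}=R$.

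For (2), we unwind the definition: $f$ lies in the left-hand side iff $\delta f\in\bigcap_\alpha I_\alpha$ for all $\delta\in\DD^{n-1}$. That holds iff, for each $\alpha$, $\delta f\in I_\alpha$ for all $\delta\in\DD^{n-1}$, which is exactly membership in the right-hand side. For $n=0$ both sides equal $R$.

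For (3), we first need $I^{\langle n\rangle}$ to be an ideal. It is closed under addition. For closure under multiplication by $r$, write $\delta(rf)=r\delta(f)+[\delta,r](f)$ with $[\delta,r]\in\DD^{n-2}\subseteq\DD^{n-1}$, so both terms lie in $I$.

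Next we show $\sqrt{I^{\langle n\rangle}}=P$. Since $1\in\DD^{n-1}$, we have $I^{\langle n\rangle}\subseteq I\subseteq P$. Conversely, (1) gives $I^n\subseteq I^{\langle n\rangle}$, and $P^N\subseteq I$ for some $N$ because $R$ is Noetherian, so $P^{Nn}\subseteq I^{\langle n\rangle}$.

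The main step is primaryness. Suppose $ab\in I^{\langle n\rangle}$ with $a\notin P$; we must show $b\in I^{\langle n\rangle}$, i.e.\ $\delta(b)\in I$ for every $\delta\in\DD^{n-1}$. We induct on the order $k$ of $\delta$. For order $0$, we have $ab\in I$ and $a\notin P$, so $b\in I$ since $I$ is $P$-primary. For order $k$, write
\[
\delta(ab)=a\,\delta(b)+[\delta,a](b).
\]
Here $\delta(ab)\in I$, and $[\delta,a]$ has order at most $k-1$, so $[\delta,a](b)\in I$ by the inductive hypothesis applied to the statement "$\delta'(b)\in I$ for all $\delta'$ of order $<k$". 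Hence $a\delta(b)\in I$, and primaryness of $I$ gives $\delta(b)\in I$. The point to check is that the induction is phrased over all operators of order $<k$ simultaneously, for the fixed $b$, so that the commutator term is covered. With that, $I^{\langle n\rangle}$ is $P$-primary.
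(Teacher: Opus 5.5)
Your proof is correct and is essentially the argument the paper relies on: the paper cites \cite{DDSG+18} for (1) and (2), and for (3) says to rerun their prime-case proof with a $P$-primary hypothesis, which is exactly your induction on the order of $\delta$ via $\delta(ab)=a\,\delta(b)+[\delta,a](b)$. One cosmetic point: in (1) the case $k=n-1$ of the inductive step is not literally covered by your hypothesis, but there you only need $\delta(g)\in I^0=R$, which is trivial.
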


\begin{proof}
The first two points are Theorem 2.5 and Exercise 2.13 of \cite{DDSG+18} respectively. Though \cite[Proposition 2.6]{DDSG+18} is only proven for prime ideals $P$, assuming $P$ is primary and following their proof yields the third point.
\end{proof}

The initial interest in differential powers arose out of their connection to \emph{symbolic powers} which we define below.

\begin{defn}
Let $\k$ be a field, $R$ a finitely generated $\k$-algebra and $I$ an ideal of $R$. Then for any $n\ge 0$, the \emph{$n^{th}$ symbolic power} of $I$ is
\[
I^{(n)}=\bigcap_{P\in \Ass(I)}I^nR_P\cap R.
\]
\end{defn}

\begin{rem}
\label[rem]{symbolic lemma}
It follows by definition that $I^n\subseteq I^{(n)}$ and $\{I^{(n)}:n\ge 0\}$ is a graded family, i.e., $I^{(n)}I^{(m)}\subseteq I^{(n+m)}$ for integers $n,m\ge 0$. One can also show that if $I$ is $P$-primary then $I^{(n)}$ is the smallest $P$-primary ideal containing $I^n$. Combining this with \Cref{differential lemma} (2) and (3) shows that for any ideal $I$,
\[
I^n\subseteq I^{(n)}\subseteq I^{\langle n\rangle}.
\]
Thus showing that for any integer $n$ there exists an integer $n'$ such that $I^{\langle n'\rangle}\subseteq I^n$ is enough to show that the $I$-adic, symbolic and differential power topologies are cofinal.
\end{rem}

\subsection{Differential Operators on Semigroup Rings}

We begin by paraphrasing \cite[Theorem 16.11.2]{Gro67} to discuss $\DD_{\k[\Z^d]|\k}$. Moving forward we will assume $\k$ is a field of characteristic zero.

\begin{ex}
\label[ex]{poly ring diff ops}
Let $R=\k[\Z^d]=\k[x_1^{\pm 1},\cdots,x_d^{\pm 1}]$. We let $\pd_i\in \endo_\k(R)$ denote the formal partial differentiation operators $\frac{\pd}{\pd x_i}$ for $1\le i\le d$, and let $\eu_i:=x_i\pd_i$ denote the $i^{\text{th}}$ Euler operator. Then for each $n\ge 0$,
\[
\DD_{R|\k}^n=\bigoplus_{|\uu|\le n} R\pd^{\uu}=\bigoplus_{|\uu|\le n} R\eu^{\uu}.
\]
As a result, $\DD_{R|\k}=R\langle \eu \rangle$, where $R\langle \eu\rangle$ denotes the smallest subalgebra of $\DD_{R|\k}$ containing $R$ and $\eu_1,\cdots,\eu_d$.
 
We now consider $R$ equipped with the standard $\Z^d$ grading where $\deg(x_i)=\ee_i$. This induces a $\Z^d$ grading on $\DD_{R|\k}$ where $\deg(\eu_i)=\mathbf{0}$ (equivalently, $\deg(\pd_i)=-\ee_i$). Given $\bb\in \Z^d$, because $\DD_{R|\k}=R\langle \eu\rangle$ we have,
\begin{equation}
\label{graded laurent diff ops}
[\DD_{R|\k}]_\bb=[R]_\bb\langle \eu\rangle=x^\bb\k[\eu_1,\cdots,\eu_d].
\end{equation}
Since the $\pd_1,\cdots,\pd_d$ are pairwise commutative so are the Euler operators $\eu_1,\cdots,\eu_d$. Furthermore, $\k$-linearity allows us to treat $\k[\eu_1,\cdots,\eu_d]=\k[\eu]$ as a commutative polynomial ring in $d$ indeterminates.
\end{ex}

We now consider differential operators over general affine semigroups $\k[\N A]$, following \cite{ST01}. Though the following discussion generalizes to formally \'{e}tale maps we restrict to the case of localizations, \cite[Theorem 2.2.5, Corollary 2.2.6]{Mas91}. 

The localization map $\k[\N A]\hookrightarrow \k[\Z^d]$ induces an (order preserving) injective map $\DD_{\k[\N A]}\hookrightarrow \DD_{\k[\Z^d]}$. By identifying $\DD_{\k[\N A]}$ with its image we may consider $\DD_{\k[\N A]}$ as a subalgebra of $\DD_{\k[\Z^d]}$. We can also show that
\[
\DD_{\k[\N A]}=\{\delta\in\DD_{\k[\Z^d]}:\delta(\k[\N A])\subseteq \k[\N A]\}.
\]
If $\DD_{\k[\Z^d]}$ is graded as in \Cref{poly ring diff ops}, then $\DD_{\k[\N A]}$ inherits this grading. Explicitly, for any $\bb\in \Z^d$, $[\DD_{\k[\N A]}]_\bb$ consists of operators of the form $x^\bb g(\eu)\in x^\bb\k[\eu]$ such that for all $\uu\in\N A$, 
\begin{equation}
\label{semigroup diff ops eq}
x^\bb g(\eu)\cdot x^\uu\in \k[\N A].
\end{equation}
The following lemma describes the action of $\k[\eu]$ on $\k[\Z^d]$.

\begin{lemma}
\label[lemma]{eu lemma}
Let $g(\eu)\in \k[\eu]$ and $x^\uu\in \k[\Z^d]$. Then $g(\eu)\cdot x^\uu=g(\uu)x^\uu$.
\end{lemma}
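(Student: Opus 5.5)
By linearity, it suffices to prove the identity for monomials $g(\eu)=\eu^\aa=\eu_1^{a_1}\cdots\eu_d^{a_d}$. Everything then reduces to understanding how a single Euler operator acts on a Laurent monomial.

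First I compute $\eu_i\cdot x^\uu$ for $\uu\in\Z^d$. Since $\pd_i$ is formal partial differentiation on $\k[\Z^d]=\k[x_1^{\pm1},\dots,x_d^{\pm1}]$, we have $\pd_i x^\uu = u_i x^{\uu-\ee_i}$. This holds also for negative exponents, because differentiation of Laurent monomials obeys the same power rule. Hence
\[
\eu_i\cdot x^\uu = x_i\pd_i x^\uu = u_i x^\uu .
\]
So each $x^\uu$ is a simultaneous eigenvector of the $\eu_i$, with eigenvalue $u_i$.

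Next, I induct on the total degree $|\aa|$ of the monomial $\eu^\aa$. The case $|\aa|=0$ is trivial, since then $\eu^\aa$ is the identity and $g(\uu)=1$. If $|\aa|>0$, pick $i$ with $a_i>0$ and write $\eu^\aa=\eu_i\,\eu^{\aa-\ee_i}$. The Euler operators commute (see \Cref{poly ring diff ops}), so this factorization is legitimate and the order of composition does not matter. By induction,
\[
\eu^{\aa-\ee_i}\cdot x^\uu=\uu^{\aa-\ee_i}x^\uu .
\]
Applying $\eu_i$, which is $\k$-linear, to this scalar multiple of $x^\uu$ gives
\[
\eu^\aa\cdot x^\uu=u_i\,\uu^{\aa-\ee_i}x^\uu=\uu^\aa x^\uu .
\]

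Finally, for general $g(\eu)=\sum_\aa c_\aa\eu^\aa$, linearity of the action gives
\[
g(\eu)\cdot x^\uu=\sum_\aa c_\aa\uu^\aa x^\uu=g(\uu)x^\uu .
\]
There is no real obstacle. The only points needing care are that the power rule holds for negative exponents in the Laurent ring, and that the $\eu_i$ commute, so that evaluating $g$ at $\uu$ is well defined.
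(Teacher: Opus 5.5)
Your proposal is correct and is essentially the paper's argument: reduce by $\k$-linearity to monomials in the Euler operators, then induct on degree using $\eu_i\cdot x^\uu=u_i x^\uu$ and the commutativity of the $\eu_i$. The only difference is that the paper first uses commutativity to reduce to the one-variable case and inducts on $j$ for $\eu_1^j$, whereas you induct directly on the total degree $|\aa|$ of the multi-index monomial $\eu^\aa$.
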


\begin{proof}
Because the $\eu_1,\cdots,\eu_d$ are pairwise commutative it suffices to consider the one dimensional case. By $\k$-linearity we may assume that $g(\eu_1)=\eu_1^j$ for some $j\ge 0$. But then our statement follows by induction on $j$.
\end{proof}

Combining \Cref{eu lemma} with \eqref{semigroup diff ops eq}, shows that $x^\bb g(\eu)$ lies in $[\DD_{\k[\N A]}]_\bb$ if and only if for all $\uu\in \N A$, $g(\uu)x^{\bb+\uu}\in \N A$. This occurs if and only if $g(\eu)$ vanishes on $\std(x^{-\bb}\k[\N A])=(\N A\setminus -\bb+\N A)$.

We summarize our discussion below.

\begin{prop}
\label[prop]{semigroup diff ops}
If $\N A$ is a semigroup then $\DD_{\k[\N A]}$ is $\Z^d$-graded, where for each $\bb\in \Z^d$,
\[
[\DD_{\k[\N A]}]_\bb=x^\bb\I(\N A\setminus(-\bb+\N A))\k[\eu],
\]
where $\I(\N A\setminus(-\bb+\N A))$ is the vanishing ideal of $\N A\setminus(-\bb+\N A)$ in $\k[\eu]$.
\end{prop}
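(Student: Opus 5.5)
The plan is to derive \Cref{semigroup diff ops} from three ingredients: the embedding $\DD_{\k[\N A]}\hookrightarrow \DD_{\k[\Z^d]}$, the grading description \eqref{graded laurent diff ops}, and \Cref{eu lemma}. First I will show that $\DD_{\k[\N A]}$ is a graded subalgebra of $\DD_{\k[\Z^d]}$. Take $\delta\in\DD_{\k[\N A]}$ and write it, inside $\DD_{\k[\Z^d]}=\bigoplus_\bb x^\bb\k[\eu]$, as a finite sum $\delta=\sum_\bb x^\bb g_\bb(\eu)$. Applying $\delta$ to a monomial $x^\uu$ with $\uu\in\N A$ gives $\sum_\bb g_\bb(\uu)x^{\bb+\uu}$ by \Cref{eu lemma}. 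These summands lie in pairwise distinct degrees $\bb+\uu$. The ring $\k[\N A]$ is spanned by monomials, so this sum lies in $\k[\N A]$ exactly when each summand $g_\bb(\uu)x^{\bb+\uu}$ does. Since $\delta$ preserves $\k[\N A]$, so does each component $x^\bb g_\bb(\eu)$, and therefore each component lies in $\DD_{\k[\N A]}$ by the characterization of $\DD_{\k[\N A]}$ as the operators preserving $\k[\N A]$. This gives $\DD_{\k[\N A]}=\bigoplus_\bb[\DD_{\k[\N A]}]_\bb$ with $[\DD_{\k[\N A]}]_\bb\subseteq x^\bb\k[\eu]$.

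Next I will identify the degree-$\bb$ piece. An operator $x^\bb g(\eu)$ preserves $\k[\N A]$ if and only if, for every $\uu\in\N A$, either $g(\uu)=0$ or $\bb+\uu\in\N A$. Equivalently, $g$ vanishes on every $\uu\in\N A$ with $\uu\notin -\bb+\N A$, that is, on $\N A\setminus(-\bb+\N A)$. So the admissible $g$ are exactly the elements of the vanishing ideal $\I(\N A\setminus(-\bb+\N A))\subseteq\k[\eu]$. Here I use that $\k[\eu]$ is a genuine polynomial ring in $d$ variables, as noted in \Cref{poly ring diff ops}, so that a vanishing ideal makes sense. Hence $[\DD_{\k[\N A]}]_\bb=x^\bb\,\I(\N A\setminus(-\bb+\N A))$.

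The ideal is already a $\k[\eu]$-module, so I can write this as $x^\bb\I(\cdots)\k[\eu]$, matching the statement. The only real subtlety is the first step: whether $\DD_{\k[\N A]}$ coincides with the operators in $\DD_{\k[\Z^d]}$ that preserve $\k[\N A]$. The paper takes this from the localization results \cite{Mas91}. Given that fact, the homogeneity argument is just the observation that distinct $\bb$ produce distinct monomial degrees.
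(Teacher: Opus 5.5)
Your proposal is correct and follows essentially the same route as the paper: identify $\DD_{\k[\N A]}$ with the operators in $\DD_{\k[\Z^d]}$ that preserve $\k[\N A]$, then use \Cref{eu lemma} to see that $x^\bb g(\eu)$ preserves $\k[\N A]$ exactly when $g$ vanishes on $\N A\setminus(-\bb+\N A)$. Your one addition is an explicit check that the homogeneous components of a $\k[\N A]$-preserving operator again preserve $\k[\N A]$, because the terms $g_\bb(\uu)x^{\bb+\uu}$ lie in distinct degrees; the paper only asserts this when it says the grading is inherited.
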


Note that $\Z^d\setminus (-\bb+\Z^d)=\emptyset$ for all $\bb\in \Z^d$, hence \Cref{semigroup diff ops} recovers \eqref{graded laurent diff ops}.

We now use \Cref{semigroup diff ops} to give an explicit description of the ring of differential operators on scored semigroups.

\begin{thm}
\label[thm]{:t scored diff ops}
Let $\N A$ be a scored semigroup ring with defining functionals $h_1,\cdots,h_m$ and defining hyperplanes $\HH_1,\cdots,\HH_m$. Then $\I(\N A\setminus(-\bb+\N A))=\eta_{\bb}(\eu)\k[\eu]$ where 
\[
\eta_\bb(\eu):=\prod_{h_i(\bb)\notin h_i(\N A)}\prod_{\rho}(h_i(\eu)-\rho),
\]
where $\rho$ ranges over $h_i(\N A)\setminus(-h_i(\bb)+h_i(\N A))$. Thus $[\DD_{\k[\N A]}]_\bb=x^\bb\eta_\bb(\eu)\k[\eu]$ and $\ord(x^\bb\eta_\bb(\eu))$ is $\sum_{h_i(\bb)\notin h_i(\N A)}|h_i(\N A)\setminus -h_i(\bb)+h_i(\N A)|$.
\end{thm}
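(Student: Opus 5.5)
By \Cref{semigroup diff ops}, the whole statement reduces to computing the vanishing ideal $\I(S_\bb)$ of
\[
S_\bb:=\N A\setminus(-\bb+\N A)
\]
in $\k[\eu]$. The plan is to describe $S_\bb$ explicitly as a finite union of hyperplane sections, show it is Zariski dense in the corresponding union of hyperplanes, and read off the order.

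\textbf{Step 1 (describe $S_\bb$).} Write $\overline{\N A}\setminus\N A=\bigcup_i\{\uu\in\overline{\N A}: h_i(\uu)\in G_i\}$ with $G_i=\N\setminus h_i(\N A)$, which is finite. Using this, $\N A=\{\uu\in\Z^d: h_i(\uu)\in h_i(\N A)\ \text{for all } i\}$, since each $h_i(\N A)\subseteq\N$ and $\overline{\N A}$ is cut out by $h_i\ge 0$. Hence for $\uu\in\N A$ we have $\uu+\bb\in\N A$ if and only if $h_i(\uu)+h_i(\bb)\in h_i(\N A)$ for every $i$. This gives
\[
S_\bb=\bigcup_i\{\uu\in\N A: h_i(\uu)\in h_i(\N A)\setminus(-h_i(\bb)+h_i(\N A))\}.
\]
For a fixed $i$, the set $T_i:=h_i(\N A)\setminus(-h_i(\bb)+h_i(\N A))$ is empty if $h_i(\bb)\in h_i(\N A)$, because numerical semigroups are closed under addition. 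When $h_i(\bb)\notin h_i(\N A)$, the set $T_i$ is finite: elements $\rho$ larger than $\frob(h_i(\N A))-h_i(\bb)$ satisfy $\rho+h_i(\bb)\in h_i(\N A)$ whenever that number is $\ge0$. (If $h_i(\bb)$ is very negative, $T_i$ is still finite, and this case needs separate care; see Step 3.) So $S_\bb$ is exactly the union, over $i$ with $h_i(\bb)\notin h_i(\N A)$ and $\rho\in T_i$, of the slices $\{\uu\in\N A: h_i(\uu)=\rho\}$.

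\textbf{Step 2 (Zariski closure).} Each slice $\{\uu\in\N A:h_i(\uu)=\rho\}$ should be Zariski dense in the affine hyperplane $\{h_i=\rho\}\subseteq\k^d$. To see this, pick $\uu_0\in\N A$ with $h_i(\uu_0)=\rho$, which exists since $\rho\in h_i(\N A)$. Let $F_i=\HH_i\cap\N A$ be the facet. Then $\uu_0+(F_i\cap\N A)$ lies in the slice, because it stays in $\N A$ and $h_i$ is unchanged. The facet semigroup $F_i\cap\N A$ spans $\HH_i$ and is contained in no finite union of proper subvarieties: it contains a translate of a full-rank cone of lattice points, since far from the other facets all points of $\overline{\N A}$ lie in $\N A$. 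So any polynomial vanishing on the slice vanishes on $\{h_i=\rho\}$.

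Each $h_i(\eu)-\rho$ is irreducible, and the hyperplanes are pairwise distinct. Distinct $\rho$ give parallel distinct hyperplanes, and distinct $i$ give non-parallel ones. Therefore the vanishing ideal of the union is generated by the product, which gives $\I(S_\bb)=\eta_\bb(\eu)\k[\eu]$. The formula for $[\DD_{\k[\N A]}]_\bb$ then follows from \Cref{semigroup diff ops}. The order is the $\eu$-degree of $\eta_\bb$, since by \Cref{poly ring diff ops} the order of $x^\bb g(\eu)$ is $\deg g$. That degree is $\sum|T_i|$.

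\textbf{Step 3 (main obstacle).} The hardest point is the density claim in Step 2, particularly producing lattice points in $F_i\cap\N A$ that avoid all the removed slices parallel to the other hyperplanes. I would use \Cref{big supp function}: choose $\xx\in F_i$ with $h_j(\xx)$ larger than every gap of $h_j(\N A)$ for all $j\ne i$. Then $\xx+(\overline{\N A}\cap\HH_i)\subseteq\N A$, and this translated saturated cone is Zariski dense in $\HH_i$.

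I would also double-check the finiteness of $T_i$ when $h_i(\bb)<0$. In that case $-h_i(\bb)+h_i(\N A)$ consists of values $\ge -h_i(\bb)>0$ and eventually contains everything, so $T_i$ is again finite. This confirms that the product defining $\eta_\bb$ is finite.
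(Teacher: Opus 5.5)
Your proposal is correct and follows essentially the same route as the paper: you write $\N A\setminus(-\bb+\N A)$ as a finite union of slices $\{\uu\in\N A: h_i(\uu)=\rho\}$ with $\rho\in h_i(\N A)\setminus(-h_i(\bb)+h_i(\N A))$, pass to the Zariski closure (a union of distinct affine hyperplanes), and use that the intersection of principal ideals generated by pairwise non-associate linear forms is generated by their product. The one place you go beyond the paper is Step 3, where you justify the Zariski density of each slice using \Cref{big supp function} and the description $\N A=\{\uu\in\Z^d: h_i(\uu)\in h_i(\N A)\text{ for all } i\}$; the paper only asserts this density.
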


\begin{proof}
Given $\bb\in \Z^d$ and $\uu\in \N A$, $\uu\in \N A\setminus(-\bb+\N A)$ if and only if $h_i(\uu+\bb)\notin h_i(\N A)$ for some $i$, hence $\N A\setminus(-\bb+\N A)=\bigcup_{i=1}^m \{\uu\in \N A: h_i(\uu)\notin -h_i(\bb)+h_i(\N A)\}$. Since the Zariski closure of this set is $\bigcup_{\rho}\{\uu\in\k^d:h_i(\uu)=\rho\}$ where $\rho$ ranges over $h_i(\N A)\setminus (-h_i(\bb)+h_i(\N A))$, we have
\[
\I(\{\uu\in \N A: h_i(\uu)\notin -h_i(\bb)+h_i(\N A)\})=\bigcap_{\rho}(h_i(\eu)-\rho)\k[\eu]=\prod_{\rho}(h_i(\eu)-\rho)\k[\eu],
\]
where $\rho$ ranges over $h_i(\N A)\setminus(-h_i(\bb)+h_i(\N A))$.
We replace the intersection with a product in the last equality because we are taking the intersection of principal ideals (in a polynomial ring) whose generators are relatively prime.

Taking the ideal of this union shows that $\I(\N A\setminus (-\bb+\N A))$ is
\begin{equation}
\label{:e scored diff ops}
\bigcap_{i=1}^m \I(\{\uu:h_i(\uu)+h_i(\bb)\notin h_i(\N A)\})=\prod_{h_i(\bb)\notin h_i(\N A)} \prod_{\rho }(h_i(\eu)-\rho)\k[\eu],
\end{equation}
where $\rho$ ranges over $h_i(\N A)\setminus(-h_i(\bb)+h_i(\N A))$. It suffices to take the second product over those $1\le i\le m$ for which $h_i(\bb)\notin h_i(\N A)$ because if $h_i(\bb)\in h_i(\N A)$ then $h_i(\N A)\setminus-h_i(\bb)+ h_i(\N A)=\emptyset$.

The second statement follows immediately from the fact that $\ord(\eu_{x_i})=1$, hence $\ord(x^\bb\eta_\bb(\eu))=\deg(\eta_\bb(\eu))$.
\end{proof}

We may also specialize the above theorem to normal semigroups, recovering \cite[Theorem 3.2.2]{ST01}.

\begin{prop}
\label[prop]{C semigroup diff ops}
If $\k[\N A]$ is a $\CC$-semigroup then $\I(\N A\setminus(-\bb+\N A))=\overline{\eta_{\bb}}(\eu)\I(\Theta(\bb))$, 
where
\[
\overline{\eta_\bb}(\eu)=\prod_{h_i(\bb)<0} \prod_{j=0}^{-h_i(\bb)-1} (h_i(\eu)-j),
\]
and $\Theta_{\N A}(\bb)=\Theta(\bb)=\{\uu\in \N A:\bb+\uu\in \gaps(\N A)\}$.
\end{prop}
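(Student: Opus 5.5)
We need to show $\I(\N A\setminus(-\bb+\N A))=\overline{\eta_\bb}(\eu)\I(\Theta(\bb))$ for a $\CC$-semigroup. The plan is to split $\N A\setminus(-\bb+\N A)$ into the points pushed outside the saturation and the points pushed into a gap. Write $S=\overline{\N A}$, so that $\N A=S\setminus\gaps(\N A)$. For $\uu\in\N A$ we have $\uu\notin -\bb+\N A$ exactly when $\bb+\uu\notin\N A$. This happens either because $\bb+\uu\notin S$, i.e. $h_i(\uu)<-h_i(\bb)$ for some $i$, or because $\bb+\uu\in\gaps(\N A)$, i.e. $\uu\in\Theta(\bb)$. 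Hence
\[
\N A\setminus(-\bb+\N A)=\Bigl(\bigcup_{h_i(\bb)<0}\{\uu\in\N A:0\le h_i(\uu)\le -h_i(\bb)-1\}\Bigr)\cup\Theta(\bb),
\]
and the vanishing ideal of the union is the intersection $\I(X)\cap\I(\Theta(\bb))$, where $X$ is the first union.

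Next we compute $\I(X)$ by the argument of \Cref{:t scored diff ops}. For fixed $i$ and $0\le j\le -h_i(\bb)-1$, I claim the slice $\{\uu\in\N A:h_i(\uu)=j\}$ is Zariski dense in the hyperplane $\{h_i=j\}$. The slice $\{\uu\in S:h_i(\uu)=j\}$ is a translate of the facet semigroup $\HH_i\cap S$, which spans $\HH_i$ and is Zariski dense there; the translate lies in $S$ because the slice is nonempty, using that $h_i$ is primitive and the facet has full dimension. Removing the finitely many gaps does not change the Zariski closure when $d\ge2$. When $d=1$ the hyperplane is a point, and that point must be checked to lie in $\N A$; if it does not, those factors have to be absorbed into $\I(\Theta(\bb))$ instead. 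Granting density, $\I(X)=\overline{\eta_\bb}(\eu)\k[\eu]$, since the linear factors $h_i(\eu)-j$ are pairwise coprime and irreducible.

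It remains to show $\overline{\eta_\bb}\k[\eu]\cap\I(\Theta(\bb))=\overline{\eta_\bb}\,\I(\Theta(\bb))$. The inclusion $\supseteq$ is clear. For $\subseteq$, take $f=\overline{\eta_\bb}g$ vanishing on $\Theta(\bb)$. It suffices to show $\overline{\eta_\bb}(\uu)\ne0$ for each $\uu\in\Theta(\bb)$, so that $g$ vanishes there. Since $\bb+\uu\in\gaps(\N A)\subseteq S$, we get $h_i(\uu)\ge -h_i(\bb)$ for all $i$, so no factor $h_i(\uu)-j$ with $0\le j\le -h_i(\bb)-1$ vanishes. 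Thus $\Theta(\bb)$ and $X$ are disjoint, and the equality follows.

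The main obstacle is the density step: showing that each integer slice of $\N A$, after the gaps are removed, is still Zariski dense in its hyperplane. This needs the nonemptiness and translate description of lattice slices of the saturated cone. The degenerate case $d=1$ also needs care: there a slice is a single point, which may itself be a gap, so it may not contribute a factor to $\overline{\eta_\bb}$.
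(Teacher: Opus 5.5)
Your proposal is correct and follows the paper's proof. You split $\N A\setminus(-\bb+\N A)$ into the points pushed outside $\overline{\N A}$ and $\Theta(\bb)$, obtain $\overline{\eta_\bb}$ from the first piece as in \Cref{:t scored diff ops}, and use disjointness to turn the intersection into the product; you also supply the density argument and the check that $\overline{\eta_\bb}$ does not vanish on $\Theta(\bb)$, both of which the paper leaves implicit.

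Your caution about $d=1$ is warranted, and the paper's \Cref{semigroup setup} assumes $d\ge 2$ for $\CC$-semigroups. There, however, the identity fails outright, and absorbing factors into $\I(\Theta(\bb))$ does not repair it: for $\langle 2,3\rangle$ and $\bb=-2$ the ideal is $\eu(\eu-3)\k[\eu]$, not $\eu(\eu-1)(\eu-3)\k[\eu]$.
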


\begin{proof}
If $\N A$ is a $\CC$-semigroup then $\uu\notin \N A$ if and only if $h_i(\uu)<0$ for some $1\le i\le m$ or $\uu\in \gaps(\N A)$. Thus for some $\bb\in \Z^d$,
\[
\N A\setminus(-\bb+\N A)=\left(\bigcup_{i=1}^m \{\uu: \uu:0\le h_i(\uu)<-h_i(\bb)\}\right)\cup \{\uu:\uu+\bb\in \gaps(\N A)\}.
\]
By analagous logic to the proof of \Cref{:t scored diff ops}, the ideal of the first set in the union is generated by $\overline{\eta_\bb}(\eu)$. Taking the ideal of the (disjoint) union gives us the desired form of $\I(\N A\setminus(-\bb+\N A))$.
\end{proof}

If $\k[\N A]$ is a $\CC$-semigroup then by \Cref{:t scored diff ops}, $\I(\overline{\N A}\setminus(-\bb+\overline{\N A}))=\overline{\eta_\bb}(\eu)\k[\eu]$.

\section{Computing Differential Powers}

We first note that if $\DD_R$ is homogeneous (as in the case of affine semigroup rings), then to show $f\in I^{\langle n\rangle}$ it suffices to show that $\delta(f)\in I$ for all \emph{homogeneous} $\delta\in \DD_R^{n-1}$. Moreover,

\begin{lemma}
\label[lemma]{diff lemma 2}
Let $\k[\N A]$ be an affine semigroup ring over a field of characteristic zero and $I\subseteq \k[\N A]$ a monomial ideal. Then $I^{\langle n\rangle}$ is also a monomial ideal.
\end{lemma}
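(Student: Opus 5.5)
To show that $I^{\langle n\rangle}$ is monomial, I will show it is spanned by monomials. Concretely: whenever $f=\sum_{\uu\in S}c_\uu x^\uu\in I^{\langle n\rangle}$ with $S$ finite and all $c_\uu\neq 0$, each $x^\uu$ with $\uu\in S$ already lies in $I^{\langle n\rangle}$. By the remark just before the statement, it suffices to test homogeneous operators. By \Cref{semigroup diff ops}, these have the form $\delta=x^\bb g(\eu)$ with $\ord(\delta)\le n-1$. By \Cref{eu lemma},
\[
\delta(f)=\sum_{\uu\in S}c_\uu g(\uu)x^{\bb+\uu}.
\]
The exponents $\bb+\uu$ are pairwise distinct, so $\delta(f)$ is a sum of monomials in distinct degrees. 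Because $I$ is a monomial ideal, it is a $\Z^d$-graded subspace. Hence $\delta(f)\in I$ forces each term $c_\uu g(\uu)x^{\bb+\uu}$ to lie in $I$.

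Since $c_\uu\neq 0$, I then argue term by term. If $g(\uu)\neq 0$, then $x^{\bb+\uu}\in I$, so $\delta(x^\uu)=g(\uu)x^{\bb+\uu}\in I$. If $g(\uu)=0$, then $\delta(x^\uu)=0\in I$. Either way $\delta(x^\uu)\in I$ for every homogeneous $\delta\in\DD^{n-1}$. It follows that $\DD^{n-1}\cdot x^\uu\subseteq I$, since a general operator of order at most $n-1$ decomposes into homogeneous components. So $x^\uu\in I^{\langle n\rangle}$, and $I^{\langle n\rangle}$ is monomial. The case $n=0$ is trivial, since $I^{\langle 0\rangle}=R$.

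The one point needing care is the decomposition used in the last step. I need that each homogeneous component of an operator in $\DD^{n-1}_{\k[\N A]}$ is again in $\DD^{n-1}_{\k[\N A]}$; this is what justifies "it suffices to test homogeneous operators." I would verify it by embedding in $\DD_{\k[\Z^d]}$. There, the graded pieces $x^\bb\k[\eu]$ have order equal to the $\eu$-degree, so the order filtration is compatible with the grading. Moreover, the condition of preserving $\k[\N A]$ is checked degree by degree: a component $x^\bb g_\bb(\eu)$ applied to $x^\uu$ is exactly the degree-$(\bb+\uu)$ part of $\delta(x^\uu)$, and $\k[\N A]$ is a graded subspace of $\k[\Z^d]$. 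Characteristic zero enters only through the identification of $\DD_{\k[\Z^d]}$ from \Cref{poly ring diff ops}.
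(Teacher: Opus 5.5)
Your proposal is correct and follows essentially the same argument as the paper: you write a homogeneous $\delta\in\DD^{n-1}_{\k[\N A]}$ as $x^\bb g(\eu)$, observe that the terms of $\delta(f)=\sum_\uu c_\uu g(\uu)x^{\bb+\uu}$ lie in distinct degrees, and use that $I$ is monomial to conclude each $\delta(x^\uu)\in I$. Your extra check that the homogeneous components of an operator of order at most $n-1$ remain in $\DD^{n-1}_{\k[\N A]}$ justifies the reduction to homogeneous operators, which the paper only asserts in the sentence just before the lemma.
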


\begin{proof}
Consider $f=\sum_\uu c_\uu x^\uu\in \k[\N A]$ with $c_\uu\in \k^*$ and assume $f\in I^{\langle n\rangle}$. Let $\delta \in \DD_{\k[\N A]}^{n-1}$ and suppose $\deg(\delta)=\bb$. By \Cref{semigroup diff ops} we can write $\delta=x^\bb g(\eu)$ for some $g(\eu)\in \I(\N A\setminus(-\bb+\N A))$. Since $f\in I^{\langle n\rangle}$ we have
\[
\delta(f)=\sum_\uu c_\uu \delta(x^\uu)=\sum_\uu c_\uu g(\uu)x^{\uu+\bb}\in I,
\]
where we may assume $g(\uu)\ne 0$ for each $\uu$ in the above sum. Because $I$ is a monomial ideal, $g(\uu)x^{\uu+\bb}=\delta\cdot (x^\uu)\in I$. Thus every supporting monomial of $f$ lies in $I^{\langle n\rangle}$ and we are done.
\end{proof}

\begin{rem}
\label[rem]{membership criterion}
Consider a homogeneous operator of degree $\bb$ of the form $\delta=x^\bb g(\eu)$ for some $g(\eu)\in \I(\N A\setminus(-\bb+\N A))$ and some monomial $x^\uu$. Then 
\[
\delta(x^\uu)=g(\uu)x^{\bb+\uu}\notin I
\]
if and only if $\bb+\uu=\cc$ for some $\cc\in \std(I)$ and $g(\uu)\ne 0$. Replacing $\bb$ with $\cc-\uu$ in the above equation shows that $x^\uu\notin I^{\langle n+1\rangle}$ if and only if there exists some $\cc\in\std(I)$ and $g\in \I(\N A\setminus(-(\cc-\uu)+\N A))$ of degree at most $n$ such that $g(\uu)\ne 0$. Furthermore, if we define $\lambda_I(\uu,\N A)=\lambda_I(\uu)$ to be
\[
\lambda_I(\uu)=\min_{\cc\in\std(I)}\min\{\deg(g(\eu)):g(\eu)\in \I(\N A\setminus(-(\cc-\uu)+\N A)),g(\uu)\ne 0\},
\]
then $x^\uu\in I^{\langle\lambda_I(\uu)\rangle}\setminus I^{\langle \lambda_I(\uu)+1\rangle}$. Combining this with \Cref{diff lemma 2}, we have $I^{\langle n\rangle}=\langle x^\uu:\lambda_{I}(\uu,\N A)\ge n\rangle$.
\end{rem}

We now compute $\lambda_{I}(\uu,\N A)$ for scored and $\CC$-semigroups. For the rest of the article we make the following assumptions.

\begin{setup}
\label[setup]{semigroup setup}
$\k$ will denote a field of characteristic zero and all ideals will be nonzero monomial ideals. $\N A$ will denote an affine semigroup of full rank with $\Z A=\Z^d$, defining functionals $h_1,\cdots,h_m$ and defining hyperplanes $\HH_1,\cdots,\HH_m$. We will assume $\CC$-semigroups are of dimension at least 2 and have at least 1 gap, otherwise they fall under the scored case.
\end{setup}

\subsection{Scored Semigroups}

Normal semigroups are trivially scored, where each $h_i(\N A)=\N$, and numerical semigroups are scored semigroups of dimension one. Thus any statement about scored semigroups also applies to normal and numerical semigroups. 

\begin{defn}
Let $\N A$ be a scored semigroup. For each $1\le i\le m$ and $\bb\in \Z^d$ we let 
\[
\sigma_i(\bb):=h_i(\N A)\setminus (-h_i(\bb)+h_i(\N A)).
\]
\end{defn}

We may think of $\sigma_i(\bb)$ as the standard points (in $h_i(\N A)$) of the fractional ideal $t^{-h_i(\bb)}\k[h_i(\N A)]\subseteq \k[t^{\pm 1}]$. \Cref{apery lemma} states that $|\sigma_i(\bb)|\ge -h_i(\bb)$ with equality holding if $-h_i(\bb)\in h_i(\N A)$. We may write $\eta_\bb$ in \Cref{:t scored diff ops} as ${\eta_\bb(\eu)=\prod_{1\le i\le m} \prod_{\rho\in \sigma_i(\bb)}(h_i(\eu)-\rho})$ and $\deg(\eta_\bb)=\sum_{i=1}^m |\sigma_i(\bb)|$. We also observe that if $h_i(\xx)=0$ then $\sigma_i(\bb)=\sigma_i(\xx+\bb)$ for all $\bb\in \Z^d$. This occurs if $\xx$ lies in some face $F$ and $i\in \supp(F)$.

\begin{thm}
\label[thm]{scored primary diff powers}
Let $I$ be a $P_F$ primary ideal of a scored semigroup ring $\k[\N A]$. Then 
\begin{equation}
\label{scored primary eq}
\lambda_{I}(\uu,\N A)=\lambda_I(\uu)=\min_{\cc\in \std(I)} \sum_{\substack{i\in \supp(F)\\h_i(\cc-\uu)\notin h_i(\N A)}}|\sigma_i(\cc-\uu)|.
\end{equation}
\end{thm}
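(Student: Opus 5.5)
The plan is to unwind the definition of $\lambda_I(\uu)$ in \Cref{membership criterion} using the principal-ideal description from \Cref{:t scored diff ops}. After that, the primary hypothesis lets us discard the contributions of the functionals $h_i$ with $i\notin\supp(F)$.

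First I will fix $\cc\in\std(I)$, set $\bb=\cc-\uu$, and compute the inner minimum in the definition of $\lambda_I(\uu)$. By \Cref{:t scored diff ops}, $\I(\N A\setminus(-\bb+\N A))=\eta_\bb(\eu)\k[\eu]$, where $\eta_\bb(\eu)=\prod_{i=1}^m\prod_{\rho\in\sigma_i(\bb)}(h_i(\eu)-\rho)$. Every $g$ in this ideal has the form $g=\eta_\bb q$, so $\deg g\ge \deg\eta_\bb$, and $g(\uu)\neq 0$ forces $\eta_\bb(\uu)\ne 0$.

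I claim that $\eta_\bb(\uu)\neq 0$ always holds. Suppose $h_i(\uu)=\rho$ for some $\rho\in\sigma_i(\bb)$. Then $\rho+h_i(\bb)=h_i(\uu+\bb)=h_i(\cc)\in h_i(\N A)$, since $\cc\in\N A$. This contradicts $\rho\notin -h_i(\bb)+h_i(\N A)$. Hence $g=\eta_\bb$ is admissible, and the inner minimum equals $\deg\eta_\bb=\sum_{i=1}^m|\sigma_i(\cc-\uu)|$. Since $\sigma_i(\bb)=\emptyset$ whenever $h_i(\bb)\in h_i(\N A)$, this gives
\[
\lambda_I(\uu)=\min_{\cc\in\std(I)}\sum_{i:\,h_i(\cc-\uu)\notin h_i(\N A)}|\sigma_i(\cc-\uu)|.
\]
This sum is at least the right-hand side of \eqref{scored primary eq}, because the terms with $i\notin\supp(F)$ are nonnegative. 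So the inequality $\ge$ is immediate.

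For the reverse inequality, which is the main step, I will use that $I$ is $P_F$-primary. By \Cref{primary monomial char}, each $\cc\in\std(I)$ lies in some $(\cc_j+\Z F)\cap\N A\subseteq\std(I)$. Therefore $\cc+\xx\in\std(I)$ for every $\xx\in F$, as $\cc+\xx\in\N A$ and $\cc+\xx\in\cc_j+\Z F$.

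Each $h_i(\N A)$ is a numerical semigroup, so it contains every integer larger than its Frobenius number. By \Cref{big supp function}, I can choose $\xx\in F$ with $h_i(\xx)$ so large, for all $i\notin\supp(F)$, that $h_i(\cc+\xx-\uu)>\frob(h_i(\N A))$. Then $\sigma_i(\cc+\xx-\uu)=\emptyset$ for each such $i$. For $i\in\supp(F)$ we have $h_i(\xx)=0$, so $\sigma_i(\cc+\xx-\uu)=\sigma_i(\cc-\uu)$, as already observed before the theorem.

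Consequently the full sum evaluated at $\cc+\xx\in\std(I)$ equals the restricted sum over $\supp(F)$ at $\cc$. Taking the minimum over $\cc$ gives $\le$ in \eqref{scored primary eq}.

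The only delicate point is checking that the translate $\cc+\xx$ stays in $\std(I)$, which is exactly where the primary hypothesis enters. If $F=\{0\}$, then $\supp(F)$ contains every index, and no translation is needed.
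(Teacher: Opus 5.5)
Your proof is correct and follows essentially the same route as the paper. You first show $\eta_{\cc-\uu}(\uu)\neq 0$, which gives $\lambda_I(\uu)=\min_{\cc}\deg\eta_{\cc-\uu}$; you then translate $\cc$ by a suitable $\xx\in F$ (via \Cref{big supp function} and \Cref{primary monomial char}) to remove the terms with $i\notin\supp(F)$, and you are somewhat more explicit than the paper in separating the two inequalities and in checking that $\cc+\xx\in\std(I)$.
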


\begin{proof}
Recall that $\eta_{\cc-\uu}(\uu)=0$ if and only if $h_i(\uu)\in h_i(\N A)$ and $h_i(\uu+\cc-\uu)=h_i(\cc)\notin h_i(\N A)$ for some $i$. But $\cc\in \N A$, so we have $h_i(\cc)\in h_i(\N A)$ for all $i$. Thus $\eta_{\cc-\uu}(\uu)\ne 0$. Since $\eta_{\cc-\uu}(\eu)$ has minimal degree in $\I(\N A\setminus (-(\cc-\uu)+\N A))$, $\lambda_I(\uu)=\min_{\cc\in\std(I)} \{\deg(\eta_{\cc-\uu}(\eu)\}$.   

For any $\cc\in\std(I)$, by \Cref{big supp function} we may choose $\xx\in F$ such that $h_i(\cc+\xx-\uu)=h_i(\cc-\uu)$ for all $i\in \supp(F)$ and $h_i(\cc+\xx-\uu)>\frob(h_i(\N A))$ for all $i\notin \supp(F)$. We are also guaranteed that $\cc+\xx\in \std(I)$ by \Cref{primary monomial char}. Then 
\[
\deg(\eta_{\cc+\xx-\uu}(\eu))=\sum_{h_i(\cc+\xx-\uu)\notin h_i(\N A)}|\sigma_i(\cc+\xx-\uu)|=\sum_{\substack{i\in \supp(F)\\h_i(\cc-\uu)\notin h_i(\N A)}}|\sigma_i(\cc-\uu)|,
\]
where the above quantity is at most $\deg(\eta_{\cc-\uu}(\eu))$.
Thus when minimizing $\deg(\eta_{\cc-\uu}(\eu))$ as $\cc$ ranges over $\std(I)$ we may assume that $h_i(\cc-\uu)\in h_i(\N A)$ for all $i\notin\supp(F)$. We obtain the desired equality by counting the factors of $\eta_{\cc-\uu}(\eu)$.
\end{proof}

\begin{rem}
\label[rem]{finite input rem}
Even though $\std(I)$ may be an infinite set, when computing $\lambda_I(\uu)$ in \eqref{scored primary eq} it suffices to consider $\sum_i |\sigma_i(\cc-\uu)|$ for \emph{finitely} many $\cc$. To see this, write $\std(I)=\bigcup_{j=1}^\ell (\cc_j+\Z F)\cap \N A$ for finitely many $\cc_j$ as in \Cref{primary monomial char}. Then $h_i((\cc_j+\Z F)\cap \N A)=h_i(\cc_j)$ for all $i\in \supp(F)$, but these are the only indices we consider when summing $|\sigma_i(\cc-\uu)|$ in \eqref{scored primary eq}. Thus when minimizing $\sum_i |\sigma_i(\cc-\uu)|$ it suffices to take the minimum over $\cc\in \{\cc_1,\cdots,\cc_\ell\}$.
\end{rem}

\begin{cor}
\label[cor]{scored diff powers prime}
If $\N A$ is scored and $P_F$ prime then $\lambda_{P_F}(\uu)=\phi_F(\uu)$. Hence $P_F^{\langle n\rangle}=\langle x^\uu:\phi_F(\uu)\ge n\rangle$.
\end{cor}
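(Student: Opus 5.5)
Since $P_F$ is prime, it is $P_F$-primary with $\std(P_F)=F$, so \Cref{scored primary diff powers} applies:
\[
\lambda_{P_F}(\uu)=\min_{\cc\in F}\sum_{\substack{i\in\supp(F)\\ h_i(\cc-\uu)\notin h_i(\N A)}}|\sigma_i(\cc-\uu)|.
\]
For $\cc\in F$ and $i\in\supp(F)$ we have $h_i(\cc)=0$, so $h_i(\cc-\uu)=-h_i(\uu)$ and $\sigma_i(\cc-\uu)=\sigma_i(-\uu)$, as noted just before \Cref{scored primary diff powers}. The summand no longer depends on $\cc$, the minimum is over a nonempty set since $0\in F$, and we get
\[
\lambda_{P_F}(\uu)=\sum_{\substack{i\in\supp(F)\\ -h_i(\uu)\notin h_i(\N A)}}|\sigma_i(-\uu)|.
\]

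Next I evaluate each summand for $\uu\in\N A$. Here $h_i(\uu)\in h_i(\N A)$ and $\sigma_i(-\uu)=h_i(\N A)\setminus(h_i(\uu)+h_i(\N A))$, which is the Apéry set of the numerical semigroup $h_i(\N A)$ relative to $h_i(\uu)$. If $h_i(\uu)>0$, then $-h_i(\uu)<0$ is not in $h_i(\N A)\subseteq\N$, so the term appears. By the equality case of \Cref{apery lemma} (applicable since $h_i(\uu)\in h_i(\N A)$), the term equals $h_i(\uu)$. If $h_i(\uu)=0$, then $-h_i(\uu)=0\in h_i(\N A)$, so the term is omitted, which matches the value $h_i(\uu)=0$. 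Summing over $i\in\supp(F)$ gives $\lambda_{P_F}(\uu)=\sum_{i\in\supp(F)}h_i(\uu)=\phi_F(\uu)$.

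The last assertion, $P_F^{\langle n\rangle}=\langle x^\uu:\phi_F(\uu)\ge n\rangle$, then follows directly from \Cref{membership criterion}.

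The only point needing care is the Apéry count. I must check that $h_i(\N A)$ is a numerical semigroup, which holds in the scored case as remarked earlier; its generators may not be coprime, but rescaling $h_i$ is unnecessary since $h_i(\Z A)=\Z$ forces the gcd to be $1$. I must also check that I use the equality case, which requires $a=h_i(\uu)\in h_i(\N A)$ and $a\neq 0$. Everything else is bookkeeping.
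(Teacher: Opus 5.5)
Your proposal is correct and follows the paper's proof: apply \Cref{scored primary diff powers} with $\std(P_F)=F$, note that $h_i(\cc-\uu)=-h_i(\uu)$ for $\cc\in F$ and $i\in\supp(F)$, and use the equality case of \Cref{apery lemma} to get $|\sigma_i(-\uu)|=h_i(\uu)$. Your separate treatment of the indices with $h_i(\uu)=0$, and your check that $h_i(\Z A)=\Z$ makes $h_i(\N A)$ a genuine numerical semigroup, fill in details the paper leaves implicit.
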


\begin{proof}
Since $\std(P_F)=F$, for all $\cc\in F$ and $i\in \supp(F)$, $h_i(\cc-\uu)=-h_i(\uu)$ which lies in $-h_i(\N A)$. Thus,
\[
\lambda_{P_F}(\uu)=\min_{\cc\in F} \sum_{i\in \supp(F)}|\sigma_i(-\uu)|=\sum_{i\in \supp(F)}h_i(\uu),
\]
where the last equality follows by \Cref{apery lemma}. But the right hand side is $\phi_F(\uu)$ as desired.
\end{proof}

\begin{cor}
\label[cor]{scored diff powers bounds}
Let $I$ be a $P_F$-primary ideal of a scored semigroup ring $\k[\N A]$.  If $M_F(I):=\max_{\cc\in\std(I)}\{\phi_F(\cc)\}$ then $\phi_F(\uu)-M_F(I)\le \lambda_I(\uu)\le \phi_F(\uu)$.
\end{cor}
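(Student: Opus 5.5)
Both bounds come from the formula for $\lambda_I$ in \Cref{scored primary diff powers}, combined with the fact from \Cref{apery lemma} that $|\sigma_i(\bb)|\ge -h_i(\bb)$, with equality when $-h_i(\bb)\in h_i(\N A)$. Note that $M_F(I)$ is finite by \Cref{finite support function}.

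\emph{Upper bound.} Since $I$ is proper, $\std(I)$ is nonempty. Because $\std(I)$ is a union of sets $(\cc_j+\Z F)\cap\N A$, it contains some point $\cc_0$. We have $\phi_F(\cc_0)\ge 0$, but to get the sharpest bound we want a point with $h_i(\cc_0)=0$ for all $i\in\supp(F)$. The cleanest route is a comparison of ideals rather than a choice of point. Since $I\subseteq P_F$, we have $I^{\langle n\rangle}\subseteq P_F^{\langle n\rangle}$, because differential powers are monotone in $I$. Hence $\lambda_I(\uu)\le\lambda_{P_F}(\uu)=\phi_F(\uu)$ by \Cref{scored diff powers prime}. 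To justify $\lambda_I\le\lambda_{P_F}$: if $x^\uu\in I^{\langle n\rangle}$, then $x^\uu\in P_F^{\langle n\rangle}$, so $\lambda_{P_F}(\uu)\ge n$. Taking $n=\lambda_I(\uu)$ gives the claim. (That $I\subseteq P_F$ holds because $I$ is $P_F$-primary, so $F=\std(P_F)\subseteq\std(I)$.)

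\emph{Lower bound.} Fix $\cc\in\std(I)$ and consider the sum $\sum_{i\in\supp(F),\,h_i(\cc-\uu)\notin h_i(\N A)}|\sigma_i(\cc-\uu)|$. For each $i$ in this sum, \Cref{apery lemma} gives $|\sigma_i(\cc-\uu)|\ge h_i(\uu)-h_i(\cc)$. For the indices $i\in\supp(F)$ that are omitted because $h_i(\cc-\uu)\in h_i(\N A)$, we have $h_i(\cc-\uu)\ge 0$, so $h_i(\uu)-h_i(\cc)\le 0\le$ the omitted contribution of $0$. Summing over all $i\in\supp(F)$ therefore gives
\[
\sum_{\substack{i\in\supp(F)\\ h_i(\cc-\uu)\notin h_i(\N A)}}|\sigma_i(\cc-\uu)|\ \ge\ \sum_{i\in\supp(F)}\bigl(h_i(\uu)-h_i(\cc)\bigr)=\phi_F(\uu)-\phi_F(\cc)\ge\phi_F(\uu)-M_F(I).
\]
Minimizing over $\cc\in\std(I)$ and applying \eqref{scored primary eq} yields $\lambda_I(\uu)\ge\phi_F(\uu)-M_F(I)$.

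The only delicate point is the sign bookkeeping in the lower bound. One must check that $|\sigma_i|\ge -h_i(\bb)$ holds for every $\bb\in\Z$, including when $-h_i(\bb)\notin h_i(\N A)$ or is negative. This is exactly what \Cref{apery lemma} provides, and in the negative case the inequality is trivial.
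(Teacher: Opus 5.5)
Your proposal is correct, and the lower bound is exactly the paper's argument: apply the \Cref{apery lemma} bound $|\sigma_i(\cc-\uu)|\ge h_i(\uu-\cc)$ termwise, note that the omitted indices contribute a nonpositive $h_i(\uu-\cc)$, then minimize over $\cc\in\std(I)$. For the upper bound you go through monotonicity ($I\subseteq P_F$ gives $I^{\langle n\rangle}\subseteq P_F^{\langle n\rangle}$) and \Cref{scored diff powers prime}, whereas the paper simply evaluates \eqref{scored primary eq} at a point $\cc\in F\subseteq\std(I)$, such as $\cc=\mathbf{0}$, which is the point your abandoned first attempt was looking for; both routes rest on the same containment $F\subseteq\std(I)$.
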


\begin{proof}
Similar logic to the previous proof shows that given $\cc\in F\subseteq \std(I)$, $\lambda_I(\uu)\le \sum_{i\in \supp(F)}|\sigma_i(-\uu)|=\phi_F(\uu)$. 

For the lower bound, we first note that \Cref{finite support function} ensures that $\phi_F(\std(I))$ is bounded so discussing the maximum of $\phi_F(\std(I))$ makes sense. For any $\uu\in \N A$ and $\cc\in \std(I)$, $h_i(\uu-\cc)\le |\sigma_i(\cc-\uu)|$ by \Cref{apery lemma}. Thus
\[
\phi_F(\uu)-M_F(I)\le \phi_F(\uu-\cc)=\sum_{\substack{i\in \supp(F)\\h_i(\cc-\uu)\notin h_i(\N A)}}\overbrace{h_i(\uu-\cc)}^{\le |\sigma_i(\cc-\uu)|}+\overbrace{\sum_{\substack{i\in \supp(F)\\h_i(\cc-\uu)\in h_i(\N A)}}h_i(\uu-\cc)}^{\le 0},
\]
where the above quantity is at most $\lambda_I(\uu)$ by definition.
\end{proof}

\begin{cor}
\label[cor]{scored diff powers}
Let $I$ be a monomial ideal of a scored semigroup ring with primary decomposition $I=Q_1\cap \cdots \cap Q_s$, where for each $1\le k\le s$, $\sqrt{Q_k}=P_{F_{k}}$ for some face $F_k$. Then 
\[
I^{\langle n\rangle}=\langle x^\uu: \lambda_{Q_k}(\uu)\ge n\text{ for all } 1\le k\le s\rangle.
\]
\end{cor}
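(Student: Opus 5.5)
This follows almost directly by combining \Cref{differential lemma} (2) with \Cref{membership criterion} and \Cref{diff lemma 2}. First, since $I=Q_1\cap\cdots\cap Q_s$, part (2) of \Cref{differential lemma} gives
\[
I^{\langle n\rangle}=Q_1^{\langle n\rangle}\cap\cdots\cap Q_s^{\langle n\rangle}.
\]
Each $Q_k$ is a monomial ideal. We may take it to be monomial because monomial ideals in $\k[\N A]$ admit monomial primary decompositions; this is implicit in the statement, since $\sqrt{Q_k}=P_{F_k}$ and $\lambda_{Q_k}$ is only defined for monomial ideals. So \Cref{membership criterion} applies to it and gives
\[
Q_k^{\langle n\rangle}=\langle x^\uu:\lambda_{Q_k}(\uu)\ge n\rangle,
\]
where $\lambda_{Q_k}$ is computed by \Cref{scored primary diff powers} since $Q_k$ is $P_{F_k}$-primary.

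Second, I would verify that an intersection of monomial ideals is spanned (as a $\k$-vector space) by the monomials lying in every one of them. This holds because each monomial ideal is the $\k$-span of the monomials it contains, and these spans are indexed by subsets of $\N A$, so they intersect by intersecting the index sets. I also need that a monomial $x^\uu$ lies in $\langle x^\vv:\lambda_{Q_k}(\vv)\ge n\rangle$ exactly when $\lambda_{Q_k}(\uu)\ge n$. This follows from \Cref{membership criterion}: $x^\uu\in Q_k^{\langle \lambda_{Q_k}(\uu)\rangle}\setminus Q_k^{\langle\lambda_{Q_k}(\uu)+1\rangle}$, and differential powers form a descending chain, so $x^\uu\in Q_k^{\langle n\rangle}$ iff $n\le\lambda_{Q_k}(\uu)$. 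Descent holds because $\DD^{n-1}\subseteq\DD^{n}$.

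Combining these steps, $x^\uu\in I^{\langle n\rangle}$ iff $\lambda_{Q_k}(\uu)\ge n$ for all $k$. Since $I^{\langle n\rangle}$ is monomial by \Cref{diff lemma 2}, it is generated by such $x^\uu$, which is the claim.

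The only mildly delicate point is the membership characterization of a single monomial in a monomial ideal, namely that the generating-set description is "exact" on monomials. It is handled by the monotonicity remark above, so I expect no real obstacle.
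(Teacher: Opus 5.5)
Your proposal is correct and follows the paper's own proof. Like the paper, you pass to a monomial primary decomposition, apply \Cref{differential lemma}~(2) to get $I^{\langle n\rangle}=\bigcap_k Q_k^{\langle n\rangle}$, and read off each $Q_k^{\langle n\rangle}$ from \Cref{membership criterion} and \Cref{scored primary diff powers}. You also check explicitly that a monomial $x^\uu$ lies in $Q_k^{\langle n\rangle}$ exactly when $\lambda_{Q_k}(\uu)\ge n$, which the paper leaves implicit; this already follows from the biconditional in \Cref{membership criterion}, so the monotonicity argument is not needed.
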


\begin{proof}
We may assume that the $Q_1,\cdots,Q_s$ are monomial ideals by \cite[Proposition 3.8]{MY22}. Then this statement follows from \Cref{differential lemma} (2) and \Cref{scored primary diff powers}. 
\end{proof}

\begin{ex}
\label[ex]{normal diff powers}

Consider the normal semigroup $\N A$ from our running example \Cref{running ex 1}. Since $\phi_{F_1}=2x-y$ and $\phi_{0}=(2x-y)+(2y-x)=x+y$, by \Cref{scored diff powers prime} we have
\[
P_{F_1}^{\langle n\rangle}=\{x^ay^b \in \k[\N A]: 2a-b\ge n\}, \quad P_{F_0}^{\langle n\rangle}=\{x^ay^b\in \k[\N A]: a+b\ge n\}.
\]
\end{ex}

\begin{ex}
\label[ex]{scored diff powers ex}
We now consider the scored semigroup ring $\k[\N B]$ from \Cref{running ex 1}. Recall that $h_1(\eu)=h_1=2\eu_x-\eu_y$, $h_2(\eu)=h_2=2\eu_y-\eu_x$, $h_1(\N B)=\langle 2,3\rangle$ and $h_2(\N B)=\N$. We compute $\eta_\bb$ for a few $\bb\in \Z^d$ below.

\begin{table}[h!]
\renewcommand{\arraystretch}{1.25}
\centering
\begin{tabular}{|c|c|c|c|}
 \hline
 $\bb$ & $(h_1(\bb),h_2(\bb))$ & $\eta_\bb$ & $\deg(\eta_\bb)$ \\ 
 \hline
$(1,1)$ & $(1,1)$ & $h_1$ & $ 1 $  \\
$(-2,-2)$ & $(-2,-2)$ & $h_1(h_1-3)h_2(h_2-1)$ & 4 \\
$(1,0)$ & $(2,-1)$ & $ h_2$  & 1\\
 $(-1,1)$ & $(-3,3)$ & $h_1(h_1-2)(h_1-4) $ & 3 \\ 
 \hline
\end{tabular}
\label{scored semigroup table 1}
\end{table}

Since $h_2(\N B)=\N$, the form $h_2$ appears in our expression for $\eta_\bb$ if and only if $h_2(\bb)<0$. On the other hand, even if $h_1(\bb)>0$, $h_1(\bb)$ may be a gap of $\langle 2,3\rangle$ (see the first row) and give a factor of $h_1$ in our expression for $\eta_\bb$. 

Now consider the $P_{F_1}$-primary $J=\langle x^2y^2,x^3y^2,x^4y^2\rangle\subseteq \k[\N B]$ also defined in \Cref{running ex 1}. We can compute that $M_F(J)=\phi_{F_1}((2,1)+F)=3$. Consider  $x^\uu=x^2y^2$ and $x^\vv=x^4y^2$, so $\phi_{F_1}(\uu)=2$ and $\phi_{F_1}(\vv)=6$. Moreover,
\[
\lambda_J(\uu)=\deg(\eta_{(1,1)})=1, \quad \lambda_J(\vv)=\deg(\eta_{(-1,1)})=3=\phi_{F_1}(\vv)-M_{F_1}(J),
\]
where the orders of the canonical operators are calculated in the above table. Thus $x^\uu\in J^{\langle 1\rangle}\setminus J^{\langle 2\rangle}$ and $x^\vv\in J^{\langle 3\rangle}\setminus J^{\langle 4\rangle}$. This shows that the lower bound in \Cref{scored diff powers bounds} is sharp and that it is also possible for $\lambda_I(\uu)$ to lie strictly between our bounds. Our upper bound on $\lambda_I(\uu)$ is attained whenever $I$ is prime.

\end{ex}

\subsection{C-semigroups}

We now consider differential powers of monomial ideals in $\CC$-semigroups following the assumptions in \Cref{semigroup setup}.

\begin{defn}
Let $X\subseteq \k^d$ be a variety and $\uu\notin X$. The \emph{separation degree} of $\uu$ (over $X$) is
\[
\sepdeg(\uu,X)=\min\{\deg(g):g\in \I(X),\ g(\uu)\ne 0\}.
\]
We declare $\sepdeg(\uu,\emptyset)=0$ for all $\uu$. 
\end{defn}
The existence of some $g\in \I(X)$ such that $g(\uu)\ne 0$ follows from $\uu$ not lying in $X$, hence we may think of $g$ separating $\uu$ and $X$.

\begin{prop}
\label[prop]{C semigroup diff powers}
Let $\N A$ be a $\CC$-semigroup, and $I$ a $P_F$-primary ideal. Then $\lambda_I(\uu,\N A)=\lambda_I(\uu)$ is given below:
\[
\begin{cases*}
\displaystyle{\min_{\cc\in\std(I)} \sum_{i\in\supp(F)} \max\{h_i(\uu-\cc),0\}} & $\sqrt{I}\ne \mm$ \\
\displaystyle{\min_{\cc\in\std(I)} \left(\sum_{i\in\supp(F)} \max\{h_i(\uu-\cc),0\}+\sepdeg(\uu,\Theta(\cc-\uu))\right)} & $\sqrt{I}=\mm$.
\end{cases*}
\]
\end{prop}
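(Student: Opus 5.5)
The plan is to compute $\lambda_I(\uu)$ directly from its definition in \Cref{membership criterion}, using the explicit description of $\I(\N A\setminus(-\bb+\N A))$ in \Cref{C semigroup diff ops}, and then to reduce the minimum over $\cc$ exactly as in the proof of \Cref{scored primary diff powers}.

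\emph{Step 1: factor the minimal degree.} Fix $\uu\in\N A$ and $\cc\in\std(I)$, and put $\bb=\cc-\uu$. By \Cref{C semigroup diff ops}, every $g\in\I(\N A\setminus(-\bb+\N A))$ has the form $g=\overline{\eta_\bb}\,k$ with $k\in\I(\Theta(\bb))$. Evaluate at $\uu$. Whenever $h_i(\bb)<0$ we have
\[
h_i(\uu)=h_i(\cc)-h_i(\bb)\ge -h_i(\bb),
\]
since $h_i(\cc)\ge 0$. Hence no factor $h_i(\uu)-j$ with $0\le j\le -h_i(\bb)-1$ vanishes, and so $\overline{\eta_\bb}(\uu)\neq 0$. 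It follows that $g(\uu)\ne 0$ if and only if $k(\uu)\ne0$.

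Next, $\uu\notin\Theta(\bb)$, because $\uu+\bb=\cc\in\N A$ is not a gap. So the separation degree $\sepdeg(\uu,\Theta(\bb))$ is defined, and the inner minimum in the definition of $\lambda_I$ equals
\[
\deg\overline{\eta_\bb}+\sepdeg(\uu,\Theta(\bb))=\sum_{i}\max\{h_i(\uu-\cc),0\}+\sepdeg(\uu,\Theta(\cc-\uu)).
\]
Here $i$ ranges over all $1\le i\le m$. This already gives the formula in the case $F=\{0\}$, i.e.\ $\sqrt I=\mm$, since then $\supp(F)=\{1,\dots,m\}$.

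\emph{Step 2: remove the indices outside $\supp(F)$ when $F\ne\{0\}$.} Given $\cc\in\std(I)$, use \Cref{big supp function} to choose $\xx\in F$ such that $h_i(\xx)$ is huge for every $i\notin\supp(F)$. By \Cref{primary monomial char}, $\cc+\xx\in\std(I)$. Replacing $\cc$ by $\cc+\xx$ has the following effects.
\begin{itemize}
\item It leaves $h_i(\uu-\cc)$ unchanged for $i\in\supp(F)$.
\item It makes $\max\{h_i(\uu-\cc-\xx),0\}=0$ for $i\notin\supp(F)$.
\item It kills the separation term. Indeed, if $\vv\in\Theta(\cc+\xx-\uu)$, then $\cc+\xx-\uu+\vv$ is a gap. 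Since $h_i(\vv)\ge0$, its value under some $h_i$ with $i\notin\supp(F)$ would be at least $h_i(\cc+\xx-\uu)$. Choosing $h_i(\xx)$ larger than $\max\{h_i(\mathbf g):\mathbf g\in\gaps(\N A)\}+h_i(\uu)$ makes this impossible, because $\gaps(\N A)$ is finite. Here we use that $\supp(F)$ is a proper subset of $\{1,\dots,m\}$ when $F\ne\{0\}$. Thus $\Theta=\emptyset$ and the separation degree is $0$ by convention.
\end{itemize}
So the modified quantity is at most the original one. Hence the minimum over $\std(I)$ may be taken over those $\cc$ for which these contributions vanish, which yields the first case of the formula. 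Conversely, every term in the displayed expression is at least the corresponding truncated sum over $\supp(F)$, so the minima agree.

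\emph{Expected obstacle.} The main point needing care is Step 1. There I must justify that the product decomposition of the ideal turns ``minimal degree of $g$ with $g(\uu)\ne0$'' into the exact sum $\deg\overline{\eta_\bb}+\sepdeg$. This uses that the ideal is the principal multiple $\overline{\eta_\bb}\,\I(\Theta(\bb))$, so degrees add, together with the nonvanishing $\overline{\eta_\bb}(\uu)\neq0$. I would double-check that the union in the proof of \Cref{C semigroup diff ops} does give the product rather than merely the intersection. If only the intersection were available, I would argue separately: since $\overline{\eta_\bb}$ is a product of linear forms not vanishing at $\uu$, any $g$ in the ideal with $g(\uu)\neq 0$ factors as $\overline{\eta_\bb}\,k$ with $k$ vanishing on $\Theta(\bb)$ off the hyperplanes. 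The lower bound would then need a Zariski-density check, which I expect to follow from the factorization in \Cref{C semigroup diff ops} as stated.
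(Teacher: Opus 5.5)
Your proposal is correct and follows essentially the same route as the paper. You factor $\I(\N A\setminus(-(\cc-\uu)+\N A))=\overline{\eta_{\cc-\uu}}\,\I(\Theta(\cc-\uu))$, use $\overline{\eta_{\cc-\uu}}(\uu)\neq 0$ so that degrees add and the $\mm$-primary case follows, and then translate $\cc$ by a suitable $\xx\in F$ to kill both the off-support terms and $\Theta$ in the non-maximal case; your two-sided comparison of the minima is, if anything, a bit cleaner than the paper's. The worry in your last paragraph is easily settled: for $\vv\in\Theta(\bb)$ the point $\vv+\bb$ is a gap and hence lies in $\overline{\N A}$, so $h_i(\vv)\ge -h_i(\bb)$ and $\overline{\eta_\bb}$ is nonzero on $\Theta(\bb)$, which gives $\overline{\eta_\bb}\k[\eu]\cap\I(\Theta(\bb))=\overline{\eta_\bb}\,\I(\Theta(\bb))$ directly.
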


\begin{proof}
Recall that $\I(\N A\setminus(-(\cc-\uu)+\N A))=\overline{\eta_{\cc-\uu}}(\eu)\I(\Theta(\cc-\uu))$. Furthermore, $\overline{\eta_{\cc-\uu}}(\uu)\ne 0$ because $\cc\in \N A$. Thus 
\[
\lambda_{I}(\uu,\N A)=\min_{\cc\in\std(I)}\{\deg(\overline{\eta_{\cc-\uu}}(\eu)g(\eu)):g(\eu)\in \I(\Theta(\cc-\uu),\ g(\uu)\ne 0\}
\]
If $\sqrt{I}=\mm$ then we are done by the definition of separation degree and $\overline{\eta_{\cc-\uu}}(\eu)$.

If $\sqrt{I}\ne \mm$, for all $\cc\in\std(I)$ we may choose $\xx\in F$ such that $h_i(\cc+\xx-\uu)=h_i(\cc-\uu)$ for all $i\in \supp(F)$ and $h_i(\cc+\xx-\uu)>0$ for all $i\notin \supp(F)$. We also assume that $h_i(\cc+\xx-\uu)>h_i(\ggg)$ for all $\ggg\in \gaps(\N A)$ and some $i\notin \supp(F)$. Since $h_i$ is nonnegative on $\N A$, for all $\bb\in \N A$ and $\ggg\in \gaps(\N A)$, $h_i(\bb+\cc+\xx-\uu)>h_i(\ggg)$. Thus $\bb+\cc+\xx-\uu\notin \gaps(\N A)$, $\Theta(\cc+\xx-\uu)=\emptyset$ and $\sepdeg(\uu,\Theta(\cc+\xx-\uu))=0$. In particular, $\deg(\overline{\eta_{\cc+\xx-\uu}}(\eu))\le \deg(\overline{\eta_{\cc-\uu}(\eu)})$, so we may compute $\lambda_I(\uu)$ by taking the following minimum:
\begin{align*}
\lambda_I(\uu)=\min_{\cc\in\std(I)}\deg(\eta_{\cc+\xx-\uu})&=\min_{\cc\in\std(I)} \sum_{i\in \supp(F)} \max\{h_i(\uu-\cc-\xx),0\}\\
&=\min_{\cc\in\std(I)} \sum_{i\in \supp(F)} \max\{h_i(\uu-\cc),0\}.
\end{align*}
\end{proof}

\begin{lemma}
\label[lemma]{theta lemma}
Let $\N A$ be a $\CC$-semigroup and $I\subseteq \k[\N A]$.
\begin{enumerate}
\item For every $\cc\in\std(I)$, and $\uu\in \N A$, $\sepdeg(\uu,\Theta(\cc-\uu))\le |\Theta(\cc-\uu)|\le |\gaps(\N A)|$.

\item For any $\cc\in\std(I)$ and $\uu$ such that $h_i(\uu-\cc)\ge 0$ for all $1\le i\le m$ and $\phi_0(\uu)$ sufficiently large, $\Theta(\cc-\uu)+\cc-\uu=\gaps(\N A)$, hence $|\Theta(\cc-\uu)|=|\gaps(\N A)|$.

\end{enumerate}
\end{lemma}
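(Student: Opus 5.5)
For part (1), the plan is to exhibit an explicit separating polynomial and then bound $|\Theta(\cc-\uu)|$ by injectivity. Note first that $\uu\notin\Theta(\cc-\uu)$: otherwise $\cc=\uu+(\cc-\uu)$ would be a gap, contradicting $\cc\in\std(I)\subseteq\N A$. Hence $\sepdeg(\uu,\Theta(\cc-\uu))$ is defined. Since $\Theta(\cc-\uu)$ is finite, for each $\vv\in\Theta(\cc-\uu)$ I would choose a linear form $\ell_\vv(\eu)$ with $\ell_\vv(\vv)=0$ and $\ell_\vv(\uu)\ne 0$. A concrete choice is $\ell_\vv(\eu)=\langle \uu-\vv,\eu-\vv\rangle$, which is nonzero at $\uu$ in characteristic zero. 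The product $g=\prod_{\vv}\ell_\vv$ then lies in $\I(\Theta(\cc-\uu))$, satisfies $g(\uu)\ne0$, and has degree $|\Theta(\cc-\uu)|$. (If $\Theta(\cc-\uu)$ is empty, both sides are $0$ by convention.) For the second inequality, the map $\vv\mapsto\vv+\cc-\uu$ is an injection $\Theta(\cc-\uu)\to\gaps(\N A)$, being a translation, so $|\Theta(\cc-\uu)|\le|\gaps(\N A)|$.

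For part (2), translation already gives $\Theta(\cc-\uu)+\cc-\uu\subseteq\gaps(\N A)$, so it remains to show the reverse inclusion. Fix $\ggg\in\gaps(\N A)$. We need $\ggg-\cc+\uu\in\N A$. Because $h_i(\ggg)\ge0$ and $h_i(\uu-\cc)\ge0$, the point $\ggg+\uu-\cc$ lies in $\overline{\N A}$. It is therefore in $\N A$ unless it is itself a gap. So it suffices to show that when $\phi_0(\uu)$ is large, $\ggg+\uu-\cc$ avoids the finite set $\gaps(\N A)$.

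For this I would use that $\phi_0(\ggg+\uu-\cc)=\phi_0(\ggg)+\phi_0(\uu)-\phi_0(\cc)$. The main obstacle is controlling $\phi_0(\cc)$, since $\std(I)$ may be infinite. However, $\cc$ is a lattice point in the cone with $h_i(\cc)\le h_i(\uu)$ for all $i$, so $\phi_0(\cc)\le\phi_0(\uu)$. This bound goes the wrong way, so instead I would argue by contradiction. If $\ggg+\uu-\cc=\ggg'\in\gaps(\N A)$, then $h_i(\uu-\cc)=h_i(\ggg'-\ggg)$ for every $i$, so $\phi_0(\uu-\cc)\le G:=\max_{\ggg'}\phi_0(\ggg')$.

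Thus the hypothesis should be read as requiring $\phi_0(\uu-\cc)$ to be large, say $\phi_0(\uu-\cc)>G$, i.e.\ $\phi_0(\uu)$ large relative to $\phi_0(\cc)$. That is the interpretation I would adopt. In the intended applications $\cc$ ranges over finitely many relevant points, and for an $\mm$-primary $I$ the set $\std(I)$ is finite, so $\phi_0(\cc)$ is bounded. Under this reading, no $\ggg+\uu-\cc$ is a gap, so $\ggg-(\cc-\uu)\in\Theta(\cc-\uu)$, and equality of the sets, hence of cardinalities, follows.
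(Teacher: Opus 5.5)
Your proposal is correct and follows the paper's proof. In part (1) you use the same translation injection and the same product-of-linear-forms idea; your explicit $\prod_{\vv}\langle \uu-\vv,\eu-\vv\rangle$, together with the check that $\uu\notin\Theta(\cc-\uu)$, is a cleaner version of the paper's appeal to $\I(\{\pp_1\})\cdots\I(\{\pp_s\})$. In part (2) you give the same argument that $\ggg+\uu-\cc\in\overline{\N A}$ avoids the gaps once $\phi_0(\uu-\cc)>\max\phi_0(\gaps(\N A))$, and your reading of the hypothesis is the intended one: $\cc$ is fixed before $\uu$, and the paper itself chooses $\uu$ with $\phi_0(\uu-\cc)>\phi_0(\ggg)$ for all $\ggg\in\gaps(\N A)$, so the threshold may depend on $\cc$.
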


\begin{proof}
Given $\cc\in \std(I)$ and $\uu\in \N A$, 
\[
\Theta(\cc-\uu)+(\cc-\uu)=\{\ggg\in \gaps(\N A):\ggg+\uu-\cc\in \N A\}\subseteq \gaps(\N A).
\]
Taking the cardinality of these sets shows that $|\Theta(\cc-\uu)|\le |\gaps(\N A)|<\infty$. If we enumerate the (finite) set of points $\Theta(\cc-\uu)=\{\pp_1,\cdots,\pp_s\}$ then $\I(\Theta(\cc-\uu))=\I(\{\pp_1\})\cdots\I(\{\pp_s\})$ has a generating set consisting of $s$-fold products of linear forms. Thus $|\sepdeg(\uu,\Theta(\cc-\uu))|\le |\Theta(\cc-\uu)|$, proving our first point.

Since there are only finitely many values in $\phi_0(\gaps(\N A))$, we may choose $\uu$ such that $\phi_0(\uu-\cc)>\phi_0(\ggg)$ for all $\ggg\in \gaps(\N A)$. Since $\phi_0$ is positive on $\gaps(\N A)$, $\ggg+\uu-\cc\notin \gaps(\N A)$ for all $\ggg\in \gaps(\N A)$. Since $h_i(\uu-\cc)\ge 0$ for all $i$, $\uu-\cc\in \overline{\N A}$. Thus $\ggg+\uu-\cc\in \overline{\N A}\setminus \gaps(\N A)=\N A$ and we are done. 
\end{proof}

Combining the two points from the previous lemma shows that for any $\cc\in\std(I)$, if $\phi_0(\uu)$ is sufficiently large and $h_i(\uu-\cc)\ge 0$, then 
\[
\sepdeg(\uu,\Theta(\cc-\uu))=\sepdeg(\cc,\Theta(\cc-\uu)+\cc-\uu)=\sepdeg(\cc,\gaps(\N A).
\]
In particular, the separation degree is independent of $\uu$ (assuming $\phi_0(\uu)$ is sufficiently large).

\begin{cor}
\label[cor]{C semigroup diff powers prime}
Let $\k[\N A]$ be a $\CC$-semigroup. If $P_F\ne \mm$ then $P_F^{\langle n\rangle}=\langle x^\uu:\phi_F(\uu)\ge n\rangle$. On the other hand $\mm^{\langle n\rangle}=\langle x^\uu: \phi_0(\uu)+\sepdeg(\uu,\Theta(-\uu))\ge n\rangle$. 
\end{cor}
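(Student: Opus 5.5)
The plan is to specialize \Cref{C semigroup diff powers} to the prime case $I=P_F$, where $\std(P_F)=F$, and then read off the description of $P_F^{\langle n\rangle}$ from \Cref{membership criterion}, which gives $I^{\langle n\rangle}=\langle x^\uu:\lambda_I(\uu)\ge n\rangle$. So everything reduces to computing $\lambda_{P_F}(\uu)$ in the two cases $P_F\ne\mm$ and $P_F=\mm$.

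\emph{Case $P_F\ne \mm$.} For every $\cc\in F$ and every $i\in\supp(F)$ we have $h_i(\cc)=0$, hence $h_i(\uu-\cc)=h_i(\uu)\ge 0$ because $\uu\in\N A\subseteq\overline{\N A}$. The maximum with $0$ is therefore redundant, and the summand no longer depends on $\cc$. The first line of \Cref{C semigroup diff powers} then gives
\[
\lambda_{P_F}(\uu)=\sum_{i\in\supp(F)}h_i(\uu)=\phi_F(\uu).
\]
The minimum over $\cc$ is over a nonempty set, since $0\in F$. Substituting this into \Cref{membership criterion} yields $P_F^{\langle n\rangle}=\langle x^\uu:\phi_F(\uu)\ge n\rangle$.

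\emph{Case $P_F=\mm$.} Here $F=\{0\}$, so $\std(\mm)=\{0\}$ and the only choice is $\cc=0$. Also $\supp(\{0\})=\{1,\dots,m\}$, and $h_i(\uu)\ge 0$ for all $i$. The second line of \Cref{C semigroup diff powers} collapses to
\[
\lambda_\mm(\uu)=\sum_{i=1}^m h_i(\uu)+\sepdeg(\uu,\Theta(-\uu))=\phi_0(\uu)+\sepdeg(\uu,\Theta(-\uu)),
\]
and again \Cref{membership criterion} gives the stated generating set for $\mm^{\langle n\rangle}$.

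Both cases are direct substitutions, so I do not expect a genuine obstacle. The points that need checking are:
\begin{itemize}
\item the identification $\std(P_F)=F$;
\item the vanishing of $h_i$ on $F$ for $i\in\supp(F)$, which holds by the definition of support;
\item that $\sepdeg(\uu,\Theta(-\uu))$ is well defined, i.e.\ $\uu\notin\Theta(-\uu)$.
\end{itemize}
The last point holds because $\uu+(-\uu)=0\in\N A$, so $0$ is not a gap and hence $\uu\notin\Theta(-\uu)$. When $\Theta(-\uu)=\emptyset$ the convention $\sepdeg(\uu,\emptyset)=0$ applies.
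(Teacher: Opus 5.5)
Your proposal is correct and is essentially the paper's proof. You specialize \Cref{C semigroup diff powers} using $\std(P_F)=F$ and observe that $h_i(\uu-\cc)=h_i(\uu)\ge 0$ for $i\in\supp(F)$, so the maxima drop out and $\lambda$ becomes $\phi_F(\uu)$, or $\phi_0(\uu)+\sepdeg(\uu,\Theta(-\uu))$ with $\cc=0$ in the maximal case. Your extra checks (the minimum is over a nonempty set, and $\uu\notin\Theta(-\uu)$ so the separation degree is defined) are valid details the paper leaves implicit.
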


\begin{proof}
For any $\uu\in \N A$ and $\cc\in\std(P_F)=F$, $h_i(\uu-\cc)=h_i(\uu)\ge 0$. Thus $\max\{h_i(\uu-\cc),0\}=h_i(\uu)$ and by \Cref{C semigroup diff powers}, $\lambda_{P_F}(\uu)=\sum_{i\in \supp(F)}h_i(\uu)=\phi_F(\uu)$ and $\lambda_\mm(\uu)=\phi_0(\uu)+\sepdeg(\uu,\Theta(-\uu))$
\end{proof}

\begin{cor}
\label[cor]{C semigroup diff powers bounds}
Let $\k[\N A]$ be a $\CC$-semigroup ring and $I$ a $P_F$-primary ideal.
If $\sqrt{I}\ne \mm$ then $\phi_F(\uu)-M_F(I)\le \lambda_I(\uu)\le \phi_F(\uu)$. If $\sqrt{I}=\mm$ then $\lambda_I(\uu)\le \phi_0(\uu)+|\gaps(\N A)|$. 
\end{cor}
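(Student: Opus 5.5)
The plan is to read all three bounds off the formula for $\lambda_I(\uu)$ in \Cref{C semigroup diff powers}, mirroring the proof of \Cref{scored diff powers bounds}. Throughout we use $M_F(I)=\max_{\cc\in\std(I)}\phi_F(\cc)$, which is finite by \Cref{finite support function}.

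For the upper bound when $\sqrt{I}\ne\mm$, we take $\cc=0$ in the minimum. This is legitimate because $0\in F\subseteq\std(I)$, since $I\subseteq P_F$. For this choice every term is $\max\{h_i(\uu),0\}=h_i(\uu)$, because $h_i\ge 0$ on $\N A$. Hence $\lambda_I(\uu)\le\sum_{i\in\supp(F)}h_i(\uu)=\phi_F(\uu)$.

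For the lower bound when $\sqrt{I}\ne\mm$, fix any $\cc\in\std(I)$. Using $\max\{t,0\}\ge t$ termwise,
\[
\sum_{i\in\supp(F)}\max\{h_i(\uu-\cc),0\}\ge\phi_F(\uu-\cc)=\phi_F(\uu)-\phi_F(\cc)\ge\phi_F(\uu)-M_F(I).
\]
Taking the minimum over $\cc\in\std(I)$ then gives $\lambda_I(\uu)\ge\phi_F(\uu)-M_F(I)$.

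For the case $\sqrt{I}=\mm$, we have $F=\{0\}$, so $\supp(F)$ contains every index and $\phi_F=\phi_0$. Again we choose $\cc=0\in\std(I)$. The first sum becomes $\sum_i h_i(\uu)=\phi_0(\uu)$. The separation term is bounded by \Cref{theta lemma}(1): $\sepdeg(\uu,\Theta(-\uu))\le|\gaps(\N A)|$. Together these give $\lambda_I(\uu)\le\phi_0(\uu)+|\gaps(\N A)|$.

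There is no real obstacle here. The only point needing care is that $0$ is a standard point, which holds because $I$ is a proper ideal.
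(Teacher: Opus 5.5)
Your proposal is correct and matches the paper's proof: both get the upper bounds by evaluating the formula for $\lambda_I(\uu)$ at a point of $F$ (you take $\cc=0$, the paper any $\cc\in F$, where $h_i(\uu-\cc)=h_i(\uu)\ge 0$), with the separation-degree term in the $\sqrt{I}=\mm$ case bounded by $|\gaps(\N A)|$ via the $\Theta$-lemma. Both get the lower bound from $\max\{t,0\}\ge t$, giving $\phi_F(\uu-\cc)\ge\phi_F(\uu)-M_F(I)$, and then minimize over $\cc\in\std(I)$.
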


\begin{proof}
If $\sqrt{I}\ne \mm$, then for all $\cc\in F\subseteq \std(I)$, $h_i(\uu-\cc)=h_i(\uu)\ge 0$. Thus $\lambda_I(\uu,\N A) \le \sum_{i\in \supp(F)} h_i(\uu)=\phi_F(\uu)$. The upper bound for $\lambda_I(\uu)$ if $\sqrt{I}=\mm$ follows analogously.

For our lower bound, observe that for every $\cc\in\std(I)$,
\[
\sum_{i\in\supp(F)}\overbrace{\max\{h_i(\uu-\cc),0\}}^{\ge h_i(\uu-\cc)}\ge \phi_F(\uu-\cc)\ge \phi_F(\uu)-M_F(I).
\] 
Taking the minimum over all $\cc\in\std(I)$ gives the desired lower bound of $\lambda_I(\uu)$.
\end{proof}

\begin{cor}
\label[cor]{C diff powers}
Let $I$ be a monomial ideal of a $\CC$-semigroup with primary decomposition $I=Q_1\cap \cdots\cap Q_s$, where for each $1\le k\le s$, $\sqrt{Q_k}=P_{F_k}$ for some face $F_k$. Then,
\[
I^{\langle n\rangle}=\langle x^\uu:\lambda_{Q_k}(\uu)\ge n\text{ for all }1\le k\le s\rangle.
\]
\end{cor}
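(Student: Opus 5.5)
The plan mirrors the proof of \Cref{scored diff powers}. First, by \cite[Proposition 3.8]{MY22}, the monomial ideal $I$ admits a primary decomposition $I=Q_1\cap\cdots\cap Q_s$ in which every $Q_k$ is itself a monomial ideal, with $\sqrt{Q_k}=P_{F_k}$ for a face $F_k$. Since an arbitrary primary decomposition of $I$ can be replaced by such a monomial one, I will assume the given decomposition is monomial. This assumption is needed so that each $\lambda_{Q_k}$ is defined through $\std(Q_k)$, and so that \Cref{C semigroup diff powers} applies to each $Q_k$.

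Next, by \Cref{differential lemma} (2), $I^{\langle n\rangle}=Q_1^{\langle n\rangle}\cap\cdots\cap Q_s^{\langle n\rangle}$. By \Cref{diff lemma 2}, each $Q_k^{\langle n\rangle}$ is a monomial ideal. \Cref{membership criterion} then gives $Q_k^{\langle n\rangle}=\langle x^\uu:\lambda_{Q_k}(\uu)\ge n\rangle$; here $\lambda_{Q_k}$ is computed explicitly by \Cref{C semigroup diff powers}, with the case $\sqrt{Q_k}=\mm$ adding the separation degree term. The precise formula is not needed for this statement, only the fact that $\lambda_{Q_k}$ characterizes membership.

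Finally, an intersection of monomial ideals in $\k[\N A]$ is the monomial ideal spanned by the monomials lying in every one of them. A monomial $x^\uu$ lies in $Q_k^{\langle n\rangle}$ exactly when $\lambda_{Q_k}(\uu)\ge n$. Indeed, by \Cref{membership criterion} we have $x^\uu\in Q_k^{\langle\lambda_{Q_k}(\uu)\rangle}\setminus Q_k^{\langle\lambda_{Q_k}(\uu)+1\rangle}$, and the differential powers form a descending chain. Intersecting over $k$ yields the stated description.

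I expect no real obstacle. The only point needing care is the reduction to a monomial primary decomposition, together with the fact that membership of a monomial in a monomial ideal is decided monomial-by-monomial (the standard-points description).
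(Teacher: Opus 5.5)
Your proposal is correct and follows essentially the same route as the paper, which proves this exactly as in \Cref{scored diff powers}: pass to a monomial primary decomposition via \cite[Proposition 3.8]{MY22}, apply \Cref{differential lemma} (2), and describe each $Q_k^{\langle n\rangle}$ by its monomials. You also note that only the membership criterion of \Cref{membership criterion} is needed, not the explicit formula of \Cref{C semigroup diff powers}; that is accurate and slightly streamlines the paper's argument.
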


The proof of this statement is exactly the same as \Cref{scored diff powers}.

\begin{ex}
\label[ex]{C diff powers ex}
Consider the $\CC$-semigroup $\N C$ from \Cref{running ex 1} with $\gaps(\N C)=\{(1,2),(2,3),(2,4)\}$ and defining functionals $h_1=2\eu_x-\eu_y$ and $h_2=2\eu_y-\eu_x$. While we are guaranteed a generating set of $\I(\gaps(\N C))$ of degree at most three, we can actually find a generating set of quadratics
\[
\I(\gaps(\N C))=\langle (\eu_x-1)(\eu_x-2),(\eu_x-2)(\eu_y-2),\eu_y^2+2\eu_x-7\eu_y+8\rangle.
\]
Thus for any ideal $I\subseteq \k[\N C]$, $\cc\in\std(I)$ and $\uu\in \N C$, $\sepdeg(\uu,\Theta(\cc-\uu))\le 2$. 

We now compute the $\bb^{th}$-graded part of $\DD_{\k[\N C]}=\DD$ for various $\bb\in \Z^d$ in \Cref{C semigroup table 1}.

\begin{table}[ht]
\renewcommand{\arraystretch}{1.25}
\centering
\begin{tabular}{|c|c|c|}
\hline
$\bb$ & $(h_1(\bb),h_2(\bb))$ & $\DD_\bb$
\\
\hline
$(-1,-1)$ & $(-1,-1)$ & $x^{-1}y^{-1}h_1h_2\I(\{(3,4),(3,5)\})$ \\
$(-2,-2)$ & $(-2,-2)$ & $x^{-2}y^{-2}h_1(h_1-1)h_2(h_2-1)\I(\{(3,4),(4,5),(4,6)\})$ \\
$(-2,-4)$ & $(0,-6)$ & $x^{-2}y^{-4}h_2(h_2-1)\cdots(h_2-5)\I(\{(3,6),(4,8),(4,7)\})$ \\
$(-1,-4)$ & $(2,-7)$ & $x^{-1}y^{-4}h_2(h_2-1)\cdots(h_2-6)\k[\eu_x,\eu_y]$ \\
\hline
\end{tabular}
\caption{Graded pieces of $\DD_{\k[\N C]}$}
\label{C semigroup table 1}
\end{table}

We can compute the ideals in the first two rows to show that $\sepdeg((1,1),\Theta(-(1,1)))=1$ and $\sepdeg((2,2),\Theta(-(2,2)))=2$, hence $xy\in \mm^{\langle 3\rangle}\setminus \mm^{\langle 4\rangle}$ and $x^2y^2\in \mm^{\langle 6\rangle}\setminus \mm^{\langle 7\rangle}$.

Now consider the $\mm$-primary ideal $I\subseteq \k[\N C]$ where $\std(I)=\{(0,0),(1,1),(2,1)\}$ and fix $\uu=(3,5)$. We claim that finding the $\cc\in \std(I)$ that minimizes the sum $\sum_{i\in \supp(F)}\max\{h_i(\uu-\cc),0\}$ is not the same as finding the $\cc\in \std(I)$ that minimizes the sum in $\lambda_I(\uu)$. Indeed, the table below shows that $\sum_{i\in \supp(F)}\max\{h_i(\uu-\cc),0\}$ is minimized at $\cc=(1,1)$, while $\cc=(2,1)$ gives us the minimum order of some $\delta\in \DD_{\cc-\uu}$ where $\cc\in\std(I)$. The data for the last two rows were computed in \Cref{C semigroup table 1}.

\begin{center}
\renewcommand{\arraystretch}{1.25}

\begin{tabular}{|c|c|c|c|}
\hline
$\cc$ & $\cc-\uu$ & $\displaystyle{\sum_{i\in \supp(F)}\max\{h_i(\uu-\cc),0\}}$ & $\sepdeg(\uu,\Theta(\cc-\uu))$   \\
\hline
$(0,0)$ & $(-3,-5)$ & 8 & 2  \\
$(1,1)$ & $(-2,-4)$ & 6 & 2 \\
$(2,1)$ & $(-1,-4)$ & 7 & 0 \\
\hline
\end{tabular}
\end{center}

\end{ex}

\begin{rem}
\label[rem]{no initial degree rem}
In general, $\sepdeg(\uu,\Theta(\cc-\uu))$ need not equal the minimal degree of a nonzero polynomial in $\I(\Theta(\cc-\uu))$. Let $\N A=\N^2\setminus\{(1,0),(2,0),(5,0)\}$ and $\uu=(4,0)$. Then
\[
\Theta(-\uu)=\{(6,0),(9,0)\},\qquad
\I(\Theta(-\uu))=\langle\eu_y,(\eu_x-6)(\eu_x-9)\rangle.
\]
Every linear polynomial in this ideal vanishes at $\uu$, whereas evaluating $((\eu_x-6)(\eu_x-9))(\uu)=10\ne0$. Thus $\sepdeg(\uu,\Theta(-\uu))=2$.

Moreover, $\I(\gaps(\N A))=\langle\eu_y,(\eu_x-1)(\eu_x-2)(\eu_x-5)\rangle$, so $\sepdeg(\mathbf{0},\gaps(\N A))=3$. If $\uu=(u_1,u_2)\in\N A$ with $\phi_0(\uu)\ge5$, then either $u_2>0$ or $u_1\ge6$. Hence $\uu+\gaps(\N A)\subseteq\N A$, giving $\Theta(-\uu)=\uu+\gaps(\N A)$. Since translating by $\uu$ preserves separation degree, $
\sepdeg(\uu,\Theta(-\uu))
=\sepdeg(\mathbf{0},\gaps(\N A))=3$.

\end{rem}

\section{Applications of Differential Powers}

\subsection{The Containment Problem}

We now consider when the differential power topology $\{P^{\langle n\rangle}\}$ is cofinal with the $P$-adic topology. For the rest of this section we will assume that $\N A$ is a semigroup with generators $\aa_1,\cdots,\aa_r$, $D_F:=\max\{\phi_F(\aa_i):1\le i\le r\}$, and $D_0=D_{\{0\}}=\max\{\phi_0(\aa_i):1\le i\le r\}$. We also keep the assumptions from \Cref{semigroup setup}.

\begin{thm}
\label[thm]{:t prime containment}
Suppose \emph{either} $\N A$ is scored or $\N A$ is a $\CC$-semigroup and $P_F\ne \mm$. Then for any $n\ge 1$, 
\[
P_F^{\langle D_Fn\rangle}\subseteq P_F^{\langle D_Fn-D_F+1\rangle}\subseteq P_F^n.
\]
If $\N A$ is a $\CC$-semigroup and $\gamma:=|\gaps(\N A)|$, then 
\[
\mm^{\langle (D_0+\gamma)n\rangle}\subseteq \mm^{\langle (D_0+\gamma)(n-1)+1\rangle}\subseteq \mm^n.
\]
\end{thm}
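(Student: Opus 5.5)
The first containment in each line is immediate: differential powers decrease in $n$, and $D_Fn\ge D_Fn-D_F+1$ because $D_F\ge 1$. (Some generator lies outside $F$, so $\phi_F$ takes a positive integer value on it.) Likewise $(D_0+\gamma)n\ge(D_0+\gamma)(n-1)+1$. The content is therefore the second containment.

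By \Cref{scored diff powers prime} and \Cref{C semigroup diff powers prime}, $P_F^{\langle k\rangle}$ is spanned by the monomials $x^\uu$ with $\phi_F(\uu)\ge k$. So it suffices to show the following: if $\uu\in\N A$ and $\phi_F(\uu)\ge D_F(n-1)+1$, then $x^\uu\in P_F^n$.

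To prove this, write $\uu=\sum_j c_j\aa_j$ with $c_j\in\N$. Then
\[
\phi_F(\uu)=\sum_j c_j\phi_F(\aa_j)\le D_F\cdot\#\{\text{multiplicities } c_j \text{ with } \aa_j\notin F\},
\]
since $\phi_F(\aa_j)=0$ exactly when $\aa_j\in F$, and otherwise $\phi_F(\aa_j)\le D_F$. Hence $\sum_{\aa_j\notin F}c_j\ge \phi_F(\uu)/D_F>n-1$, so this sum is at least $n$.

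Each $x^{\aa_j}$ with $\aa_j\notin F$ lies in $P_F$, because $\std(P_F)=F$. Therefore $x^\uu$ is a product of at least $n$ elements of $P_F$ times a monomial of $\k[\N A]$, and so $x^\uu\in P_F^n$. The second containment is then immediate, since $P_F^n$ is a monomial ideal.

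For $\mm$ in a $\CC$-semigroup, \Cref{C semigroup diff powers prime} (equivalently \Cref{C semigroup diff powers bounds} with \Cref{theta lemma}(1)) gives
\[
\lambda_\mm(\uu)=\phi_0(\uu)+\sepdeg(\uu,\Theta(-\uu))\le\phi_0(\uu)+\gamma.
\]
So if $x^\uu\in\mm^{\langle (D_0+\gamma)(n-1)+1\rangle}$, then $\phi_0(\uu)\ge (D_0+\gamma)(n-1)+1-\gamma$. I want to conclude $\phi_0(\uu)>D_0(n-1)$, and then run the same generator-counting argument. Here every generator lies outside $\{0\}$, so any expression $\uu=\sum c_j\aa_j$ has $\sum c_j\ge \phi_0(\uu)/D_0>n-1$, giving $x^\uu\in\mm^n$.

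The inequality $(D_0+\gamma)(n-1)+1-\gamma>D_0(n-1)$ is equivalent to $\gamma(n-2)+1>0$. This holds for all $n\ge2$. For $n=1$ the claim $\mm^{\langle 1\rangle}\subseteq\mm$ is trivial because $\mm^{\langle 1\rangle}=\mm$.

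The main point to check carefully is this bookkeeping of the additive $\gamma$ shift against the multiplicative constant, including the edge case $n=1$ (and $n=0$ if it is included). It also has to be confirmed that the bound $\sepdeg\le\gamma$ from \Cref{theta lemma}(1) applies with $\cc=0\in\std(\mm)$. The rest is routine.
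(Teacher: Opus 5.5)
Your proposal is correct and follows the paper's proof essentially step for step. In the prime case it uses the generator-counting bound $\phi_F(\uu)\le D_F\sum_{\aa_j\notin F}c_j$ together with the linear description of $P_F^{\langle k\rangle}$; in the $\mm$ case it absorbs $\sepdeg(\uu,\Theta(-\uu))\le\gamma$ into the extra $\gamma(n-1)$ for $n\ge 2$, treats $n=1$ separately as trivial, and explicitly verifies details the paper leaves implicit, namely $D_F\ge 1$ and that \Cref{theta lemma}(1) applies with $\mathbf{0}\in\std(\mm)$.
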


\begin{proof}
Suppose $x^\uu\in P_F^{\langle D_Fn-D_F+1\rangle}$. Write $\uu=\sum_i c_i\aa_i=\sum_{\aa_i\in F}c_i\aa_i+\sum_{\aa_i\notin F}c_i \aa_i$. Since $P_F=\langle x^{\aa_i}:\aa_i\notin F\rangle$, if we let $t=\sum_{\aa_i\notin F}c_i$, then $x^\uu\in P_F^t$ and 
\[
\phi_F(\uu)=\sum_{\aa_i\notin F}c_i \phi_F(\aa_i)\le D_Ft. 
\]
But since $x^\uu\in P_F^{\langle D_Fn-D_F+1\rangle}$ we have $\phi_F(\uu)>D_F(n-1)$. Thus $D_Ft\ge \phi_F(\uu)>D_F(n-1)$, and cancelling $D_F$ from both sides gives us $t>n-1$. We conclude that $x^\uu\in P_F^t\subseteq P_F^n$ and we are done.

If $\N A$ is a $\CC$-semigroup, then $x^\uu\in \mm^{\langle (D_0+\gamma)(n-1)+1\rangle}$ implies that $\phi_0(\uu)+\sepdeg(\uu,\Theta(-\uu))>(D_0+\gamma)(n-1)$. But $\sepdeg(\uu,\Theta(-\uu))\le \gamma\le \gamma(n-1)$ for $n\ge 2$, and our theorem is trivial if $n=1$. Thus $\phi_0(\uu)>D_0(n-1)$. The rest of the proof follows analogously to the previous case.
\end{proof}

\begin{rem}
If $\N A$ is a $\CC$-semigroup and we are only concerned with finding some $C>0$ such that $\mm^{\langle Cn\rangle}\subseteq \mm^n$ for all but finitely many $n$ (which still shows that the $\{\mm^{\langle n\rangle}\}_{n\ge 0}\sim \{\mm^n\}_{n\ge 0}$) we may choose a smaller constant than in \Cref{:t prime containment}. Indeed, if $n> |\gaps(\N A)|$ then $\mm^{\langle (D_0+1)(n-1)+1\rangle}\subseteq \mm^n$, where the argument follows the same logic. 
\end{rem}

\begin{ex}
Recall $\k[\N A]$ from \Cref{running ex 1}, the normal semigroup with defining functionals $h_1=2x-y$ and $h_2=2y-x$. If $F_1=\{(a,b):h_1(a,b)=0\}$ then $D_{F_1}=h_1(2,1)=3$. Thus $P_{F_1}^{\langle 3n\rangle}\subseteq P_{F_1}^n$ for all $n\ge 0$. Moreover, one can show that 
\[
P_{F_1}^{\langle 6\rangle}=\langle x^4y^2\rangle\subseteq P_{F_1}^2,\qquad P_{F_1}^{\langle 6\rangle}\not\subseteq P_{F_1}^3.
\]
Thus our value $D_F=3$ for which $P^{\langle nD_F\rangle}\subseteq P^n$ is sharp. A similar containment happens when $n$ is any multiple of 6, so this is not behavior that only happens for a few exponents.

We now show that while $\mm^{\langle (D+1)(n-1)+1\rangle}\subseteq \mm^n$ for all $n$ sufficiently large, this is not true for small $n$. Consider the $\CC$-semigroup $\N A=\N^2\setminus \{(1,0),(2,0),(5,0)\}=\displaystyle{A=\m{0 & 1 & 2 & 3 & 4 \\1 & 1 & 1 & 0 & 0}}$. Then for all $n$ sufficiently large, $\mm^{\langle (D+1)(n-1)+1\rangle}=\mm^{\langle 5n-4\rangle}\subseteq \mm^n$. However, we claim that this is not true for $n=2$, that is $\mm^{\langle 6\rangle}\not\subseteq \mm^2$. Consider $x^4\in \mm\setminus \mm^2$. We saw in \Cref{no initial degree rem} that $\sepdeg((4,0),\Theta((-4,0)))=2$ so $\lambda_\mm(\uu)=4+2=6$. Thus $x^\uu\in \mm^{\langle 6\rangle}$ but not in $\mm^2$. 
\end{ex}

We also remark that the containment $P_F^{(D_Fn)}\subseteq P_{F}^n$ was shown in \cite{Wal18}, showing that in this case the symbolic and differential powers behave similarly. 

\begin{thm}
\label[thm]{cofinality theorem}
Suppose $\N A$ be a scored or $\CC$-semigroup and $I\subseteq \k[\N A]$ a monomial ideal. Then there exists some $C(I)=C>0$ such that 
\[
I^{Cn}\subseteq I^{(Cn)}\subseteq I^{\langle Cn\rangle}\subseteq I^n.
\]
In particular, the topologies defined by ordinary powers, symbolic powers, and differential powers of $I$ are all cofinal.
\end{thm}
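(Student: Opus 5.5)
The first two containments hold for every $n$ by \Cref{symbolic lemma}, so the plan is to find $C$ with $I^{\langle Cn\rangle}\subseteq I^n$. I would reduce to primary components and then compare each component with a prime, where \Cref{:t prime containment} already gives linear containments.

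Write a monomial primary decomposition $I=Q_1\cap\cdots\cap Q_s$ with $\sqrt{Q_k}=P_{F_k}$, as in \Cref{scored diff powers} and \Cref{C diff powers}. Then $I^{\langle N\rangle}=\bigcap_k Q_k^{\langle N\rangle}$ by \Cref{differential lemma}(2).

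For each $k$, since $Q_k$ is $P_{F_k}$-primary and the ring is Noetherian, choose $e_k$ with $P_{F_k}^{e_k}\subseteq Q_k$. For a non-maximal prime, or any prime in the scored case, \Cref{scored diff powers bounds} and \Cref{C semigroup diff powers bounds} give $\lambda_{Q_k}(\uu)\le\phi_{F_k}(\uu)=\lambda_{P_{F_k}}(\uu)$. Hence $Q_k^{\langle N\rangle}\subseteq P_{F_k}^{\langle N\rangle}$. For $\mm$ in the $\CC$-case, the bound $\lambda_{Q}(\uu)\le \phi_0(\uu)+\gamma$ instead gives $Q^{\langle N\rangle}\subseteq \langle x^\uu:\phi_0(\uu)\ge N-\gamma\rangle$. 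Applying the proof of \Cref{:t prime containment} to either case, $x^\uu\in Q_k^{\langle N\rangle}$ forces $x^\uu\in P_{F_k}^{t}$ with $t\ge (N-\gamma)/D_{F_k}$, where $\gamma=0$ in the non-maximal case.

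The key step, and the main obstacle, is passing from powers of $P_{F_k}$ back to powers of $I$, since $P_{F_k}^{t}\subseteq Q_k$ does not give containment in $I^n$. I would not argue component by component here. Instead, use that all defining functions are linear, which gives a global comparison. Let $\mathfrak{a}=\sqrt I$, set $\phi=\sum_k\phi_{F_k}$ (its zero set in $\N A$ is exactly $\std(\sqrt I)$), and write $I=\langle x^{\bb_1},\dots,x^{\bb_q}\rangle$. The sublemma to establish is: there are constants $a,b$ such that every $\uu$ with $\lambda_{Q_k}(\uu)\ge N$ for all $k$, and $N$ large, satisfies $\uu\in \sum_j \N\bb_j$-translates giving $n\approx (N-b)/a$ generators. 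To prove it, use \Cref{primary monomial char} and \Cref{finite support function}: up to a bounded amount in each $\phi_{F_k}$ direction, the generators of $I$ agree with those of the $Q_k$'s. Concretely, I would first show $\mathfrak a^{e}\subseteq I$ for some $e$. Then I would show that the linear inequalities $\lambda_{Q_k}(\uu)\ge \phi_{F_k}(\uu)-M_{F_k}(Q_k)$, which are linear bounds away from a finite correction, imply $x^\uu\in \mathfrak a^{t}$ with $t$ linear in $N$. Proving $x^\uu\in\mathfrak a^t$ is a generator-counting argument as in \Cref{:t prime containment}: write $\uu=\sum c_i\aa_i$ and use $\phi(\uu)\le D\sum_{\aa_i\notin \std(\mathfrak a)}c_i$ with $D=\max_i\phi(\aa_i)$, where each $\phi_{F_k}(\uu)\ge N-\gamma$ with $\gamma=0$ except for the $\mm$-component of a $\CC$-semigroup.

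Finally, $\mathfrak a^{en}\subseteq I^n$ because $\mathfrak a^{e}\subseteq I$. Choosing $C$ with $(Cn-\gamma)/D\ge en$, for example $C=e D+\gamma$, valid for $n\ge1$, yields $I^{\langle Cn\rangle}\subseteq \mathfrak a^{en}\subseteq I^n$. This chain avoids needing sharp component-wise control and uses only the upper bounds from \Cref{scored diff powers bounds} and \Cref{C semigroup diff powers bounds}, together with Noetherianity.
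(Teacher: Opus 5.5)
You have a genuine gap. The overall chain $I^{\langle Cn\rangle}\subseteq \mathfrak a^{en}\subseteq I^n$, with $\mathfrak a=\sqrt I$ and $\mathfrak a^{e}\subseteq I$, is sound. What fails is the step meant to give $I^{\langle N\rangle}\subseteq \mathfrak a^{t}$ with $t$ linear in $N$. The generator count in \Cref{:t prime containment} relies on $\std(P_F)=F$ being a face. That is what gives $P_F=\langle x^{\aa_i}:\aa_i\notin F\rangle$ and $\phi_F(\aa_i)=0$ for $\aa_i\in F$. When $\sqrt I$ is not prime, $\std(\mathfrak a)=F_1\cup\cdots\cup F_s$ is a union of faces, and three things go wrong. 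First, the zero set of $\phi=\sum_k\phi_{F_k}$ in $\N A$ is $\bigcap_k F_k$, not $\std(\mathfrak a)$. Second, generators lying in some $F_k$ can have $\phi(\aa_i)>0$. Third, $\mathfrak a$ is generally not generated by the $x^{\aa_i}$ with $\aa_i\notin \std(\mathfrak a)$.

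Already for $\k[x,y]=\k[\N^2]$ and $I=\mathfrak a=(x)\cap(y)=(xy)$, both generators $\ee_1,\ee_2$ lie in $\std(\mathfrak a)$. Your inequality $\phi(\uu)\le D\sum_{\aa_i\notin\std(\mathfrak a)}c_i$ then reads $u_1+u_2\le 0$, and counting generators outside $\std(\mathfrak a)$ says nothing about membership in $(xy)^t$. The vague sublemma about translates of the $\bb_j$ does not supply the missing argument either.

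The statement you need is true, but it requires an extra combinatorial step that combines the faces.
\begin{enumerate}
\item Fix one expression $\uu=\sum c_i\aa_i$. For each minimal prime $P_{F_k}$ of $I$, the bound $\phi_{F_k}(\uu)\ge N-\gamma$ forces at least $M=\lceil (N-\gamma)/D\rceil$ summands, counted with multiplicity, to lie outside $F_k$.
\item Greedily split these summands into $\lfloor M/s\rfloor$ disjoint groups, each containing, for every $k$, a summand outside $F_k$.
\item A sum of elements of $\N A$ lies in a face only if each summand does. So each group sum lies outside $\bigcup_k F_k$, giving $x^\uu\in \mathfrak a^{\lfloor M/s\rfloor}$.
\end{enumerate}
With this lemma your route goes through with $C$ of order $esD+\gamma$, and it avoids any deep input.

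The paper sidesteps non-prime radicals entirely. It invokes \cite[Theorem 3.4]{Swa97} to obtain primary decompositions $I^n=\bigcap_i Q_i$ with $P_i^{kn}\subseteq Q_i$. Then $I^{\langle C'kn\rangle}\subseteq\bigcap_i P_i^{\langle C'kn\rangle}\subseteq\bigcap_i P_i^{kn}\subseteq I^n$, which needs only \Cref{:t prime containment} for the monomial primes $P_i\supseteq I$.
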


\begin{proof}
The first two containments follow from \Cref{symbolic lemma}. To show the last containment, we use the existence of some $k>0$ such that for any fixed $n\ge 0$, we may choose a primary decomposition of $I^n$, $I^n=\bigcap_{i=1}^{s} Q_{i}$ such that $P_{i}^{kn}\subseteq Q_{i}$ where $P_{i}=\sqrt{Q_{i}}$, \cite[Theorem 3.4]{Swa97}. Since each $P_{i}\in \ass(I^n)$ and $I^n$ is a monomial ideal, we are guaranteed that $P_{i}$ is a \emph{monomial} prime ideal for all $i$, though the $Q_{i}$ may not be monomial ideals. 

Then for each $n$, $I\subseteq \bigcap_{i=1}^{s} P_{i}$ and by \Cref{:t prime containment} we may choose $C'$ such that $P^{\langle C'n\rangle}\subseteq P^n$ for all monomial primes $P$. Then,
\[
I^{\langle C'kn\rangle}\subseteq \bigcap_{i=1}^s P_i^{\langle C'kn\rangle}\subseteq \bigcap_{i=1}^s P_i^{kn}\subseteq \bigcap_{i=1}^s Q_i=I^n.
\]
\end{proof}

We also remark that even if $\k[\N A]$ is a polynomial ring, there does not exist a constant $C$ independent of $I$, \cite[Proposition 5.3]{KMP+26}. 

\subsection{Convergence of Differential Signature}

We now use our description of differential powers to show that differential signature exists for scored semigroups and $\CC$-semigroups.

\begin{defn}
Let $(R,\mm)$ be a local ring and $\k=R/\mm$. Then the \emph{differential signature} of $R$ is
\[
s_\k^{\diff}(R)=s^{\diff}(R)=\limsup_{n\rightarrow \infty} \frac{\ell_R(R/\mm^{\langle n+1\rangle_\k})}{n^d/d!}.
\]
\end{defn}

While the affine semigroup rings in this article are non-local, taking differential powers commutes with localization and completion. Thus when finding differential signature we may take the completion or localization of our semigroup ring at the maximal ideal without issue, \cite[Lemma 3.10, Lemma 3.11]{BJNB19}.

\begin{prop}
\label[prop]{scored graded diff powers}
If \emph{either} $\N A$ is a scored semigroup and $I$ a radical ideal, \emph{or} $\N A$ is a $\CC$-semigroup and $I$ a radical ideal such that $\mm\notin \ass(I)$, then $\{I^{\langle n\rangle}\}_{n\ge 0}$ is a graded family, i.e., $I^{\langle n\rangle}I^{\langle n'\rangle}\subseteq I^{\langle n+n'\rangle}$.
\end{prop}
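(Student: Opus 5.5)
The plan is to reduce to prime ideals and then use the linear description of differential powers of primes. Since $I$ is a radical monomial ideal, its associated primes are minimal and monomial, so $I=P_{F_1}\cap\cdots\cap P_{F_s}$ for faces $F_1,\dots,F_s$ of $\N A$. In the $\CC$-semigroup case the hypothesis $\mm\notin\ass(I)$ gives $P_{F_k}\ne\mm$ for every $k$. By \Cref{differential lemma} (2),
\[
I^{\langle n\rangle}=\bigcap_{k=1}^s P_{F_k}^{\langle n\rangle}.
\]
This reduces the problem to showing that $P_F^{\langle n\rangle}P_F^{\langle n'\rangle}\subseteq P_F^{\langle n+n'\rangle}$ for each such prime $P_F$. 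Indeed, if $f\in I^{\langle n\rangle}$ and $g\in I^{\langle n'\rangle}$, then for each $k$ we have $fg\in P_{F_k}^{\langle n\rangle}P_{F_k}^{\langle n'\rangle}\subseteq P_{F_k}^{\langle n+n'\rangle}$, and hence $fg\in I^{\langle n+n'\rangle}$.

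For a single prime, I would invoke \Cref{scored diff powers prime} in the scored case and \Cref{C semigroup diff powers prime} in the $\CC$-semigroup case with $P_F\ne\mm$. Both give
\[
P_F^{\langle n\rangle}=\langle x^\uu:\phi_F(\uu)\ge n\rangle.
\]
By \Cref{diff lemma 2} these ideals are monomial, so it suffices to check products of monomial generators. If $\phi_F(\uu)\ge n$ and $\phi_F(\vv)\ge n'$, then linearity of $\phi_F$ gives $\phi_F(\uu+\vv)\ge n+n'$. Since $\uu+\vv\in\N A$, this means $x^{\uu}x^{\vv}=x^{\uu+\vv}\in P_F^{\langle n+n'\rangle}$. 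Containment of generator products extends to the product ideal by bilinearity. The case $n=0$ or $n'=0$ is trivial because $I^{\langle 0\rangle}=R$.

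I do not expect a serious obstacle here. The delicate point is only making sure the linear description of $\lambda_{P_F}$ applies to every component. This is exactly why the case $\mm\in\ass(I)$ for $\CC$-semigroups is excluded: there $\lambda_\mm(\uu)=\phi_0(\uu)+\sepdeg(\uu,\Theta(-\uu))$ is not additive, as \Cref{no initial degree rem} illustrates. I would also note explicitly that radical monomial ideals are intersections of monomial primes. Every associated prime of a monomial ideal is a monomial prime $P_F$, and for a radical ideal the associated primes are exactly the minimal primes, whose intersection is $\sqrt{I}=I$.
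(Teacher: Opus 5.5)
Your proposal is correct and follows the paper's own proof: you write the radical monomial ideal $I$ as an intersection of monomial primes $P_{F_k}$ (none equal to $\mm$ in the $\CC$-semigroup case), pass differential powers through the intersection via \Cref{differential lemma} (2), and use the linear description $P_F^{\langle n\rangle}=\langle x^\uu:\phi_F(\uu)\ge n\rangle$ from \Cref{scored diff powers prime} and \Cref{C semigroup diff powers prime}. Gradedness then follows from the linearity of the $\phi_{F_k}$. The only difference is cosmetic: you verify gradedness prime by prime and then intersect, whereas the paper writes $I^{\langle n\rangle}$ directly as $\langle x^\uu:\phi_{F_k}(\uu)\ge n \text{ for all } k\rangle$.
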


\begin{proof}
Since associated primes of $I$ are also monomial ideals, there exist faces $F_1,\cdots,F_s$ such that $\displaystyle{I=P_{F_1}\cap \cdots P_{F_s}}$. Since taking differential powers commutes with intersections, for each $n\ge 0$ we have 
\[
I^{\langle n\rangle}=\bigcap_{i=1}^s P_{F_i}^{\langle n\rangle}=\langle x^\uu: \phi_{F_i}(\uu)\ge n \text{ for all } 1\le i\le s\rangle.
\]
The statement now follows by linearity of the $\phi_{F_i}$.
\end{proof}

In particular, if $\mm$ is the maximal ideal of a scored semigroup ring $R=\k[\![\N A]\!]$, then $\{ \mm^{\langle n\rangle}\}_{n\ge 0}$ is graded and hence $s^{\diff}(R)$ exists by \cite[Theorem 1.1]{Cut14}.

\begin{ex}
It is not true in general that the maximal ideal of a $\CC$-semigroup is graded. Consider $\k[\N C]$ from \Cref{running ex 1}. The first two rows of \Cref{C semigroup table 1} shows that $xy\in \mm^{\langle 3\rangle}$ and $x^2y^2\in \mm^{\langle 6\rangle}$. However, one can show that $\lambda_\mm((3,3))=8$ and hence $(xy)(x^2y^2)=x^3y^3\notin \mm^{\langle 9\rangle}$.
\end{ex}

The following proposition generalizes the proof of \cite[Theorem 7.4]{BJNB19}

\begin{prop}
\label[prop]{diff sig}
Let $R=\k[\![\N A]\!]$ where $\N A$ is either a scored or $\CC$-semigroup. Then 
\[
s^{\diff}(R)=s^{\diff}(\overline{R})=d!\ \text{vol}(\PP),
\]
where $\PP$ is the convex polytope $\PP=\{\uu\in \R_{\ge 0} A:\phi_0(\uu)\le 1\}$.
\end{prop}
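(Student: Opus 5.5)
The plan is to compute $\ell_R(R/\mm^{\langle n+1\rangle})$ as a count of lattice points and compare it with the corresponding count for the normalization. By \Cref{diff lemma 2}, $\mm^{\langle n+1\rangle}$ is a monomial ideal. Hence $\ell_R(R/\mm^{\langle n+1\rangle})=|\std(\mm^{\langle n+1\rangle})|=|\{\uu\in\N A:\lambda_\mm(\uu)\le n\}|$, where the last equality uses \Cref{membership criterion}.

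\emph{Normal case.} By \Cref{scored diff powers prime} applied to $\overline{\N A}$ (normal semigroups are scored), $\lambda_\mm(\uu)=\phi_0(\uu)$. So the length equals $|\{\uu\in\overline{\N A}:\phi_0(\uu)\le n\}|=|n\PP\cap\Z^d|$. Since $\Z A=\Z^d$, the cone is full-dimensional and $\PP$ is a bounded rational polytope of dimension $d$. Ehrhart theory (or simply Riemann sums for the dilates $n\PP$) gives $|n\PP\cap\Z^d|=\text{vol}(\PP)n^d+O(n^{d-1})$. Therefore the $\limsup$ is an honest limit, equal to $d!\,\text{vol}(\PP)$. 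This gives $s^{\diff}(\overline R)=d!\,\text{vol}(\PP)$.

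\emph{Scored case.} Here $\lambda_\mm(\uu)=\phi_0(\uu)$ again by \Cref{scored diff powers prime}, so the count becomes $|\{\uu\in\N A:\phi_0(\uu)\le n\}|$. We must show this differs from the normal count by $O(n^{d-1})$. The set $\overline{\N A}\setminus\N A$ is a finite union of hyperplane sections $\{h_i=k\}\cap\overline{\N A}$. Each such section meets $n\PP$ in a $(d-1)$-dimensional slice of a dilated polytope, so it contains $O(n^{d-1})$ lattice points. This gives the same limit.

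\emph{$\CC$-semigroup case.} By \Cref{C semigroup diff powers prime}, $\lambda_\mm(\uu)=\phi_0(\uu)+\sepdeg(\uu,\Theta(-\uu))$. By \Cref{theta lemma}(1), the correction term lies in $[0,\gamma]$ with $\gamma=|\gaps(\N A)|$. So we get the sandwich
\[
\{\uu\in\N A:\phi_0(\uu)\le n-\gamma\}\subseteq\std(\mm^{\langle n+1\rangle})\subseteq\{\uu\in\N A:\phi_0(\uu)\le n\}.
\]
Since $\N A$ differs from $\overline{\N A}$ by finitely many points, both outer counts are $\text{vol}(\PP)n^d+O(n^{d-1})$ (using $(n-\gamma)^d=n^d+O(n^{d-1})$). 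Dividing by $n^d/d!$ and letting $n\to\infty$ gives $d!\,\text{vol}(\PP)$, and the same value was obtained for $\overline R$.

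Finally, to pass from the graded ring to the completion, I invoke the remark before \Cref{scored graded diff powers}: differential powers commute with completion/localization \cite[Lemma 3.10, Lemma 3.11]{BJNB19}, and the lengths agree because the quotients are finite-dimensional and supported at $\mm$.

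The main obstacle I expect is the scored case's $O(n^{d-1})$ error estimate. It needs a uniform lattice-point bound for the slices $\{h_i=k\}\cap n\PP$. I would handle this by noting that each slice lies in a translate of $n$ times a fixed $(d-1)$-dimensional bounded rational polytope $\{h_i=0\}\cap\PP'$, and then bounding it by a crude box count. A second, minor point is checking that $\PP$ is bounded, which holds because $\phi_0>0$ on $\R_{\ge0}A\setminus\{0\}$ for a pointed cone.
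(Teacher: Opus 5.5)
Your proposal is correct and follows essentially the same route as the paper. Both arguments use Ehrhart counting for $\overline{\N A}$ via $\lambda_\mm=\phi_0$, an $O(n^{d-1})$ bound on the finitely many missing hyperplane sections in the scored case, and, in the $\CC$-semigroup case, a sandwich based on $0\le \sepdeg(\uu,\Theta(-\uu))\le|\gaps(\N A)|$. The only difference is phrasing: your sandwich $\{\phi_0\le n-\gamma\}\cap\N A\subseteq\std(\mm^{\langle n+1\rangle})\subseteq\{\phi_0\le n\}\cap\N A$ is written directly on standard sets, which is the same comparison the paper makes through the ideals $J_n$, $J_n'$ and the differential powers of $\nn$.
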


\begin{proof}
Let $\mm$ and $\nn$ denote the maximal ideals of $R$ and $\overline{R}$ respectively. By Ehrhart's theorem and \Cref{scored diff powers prime}, as $n\rightarrow \infty$,
\[
\frac{\ell_{\overline{R}}(\overline{R}/\nn^{\langle n+1\rangle})}{n^d}=\frac{|\{\uu\in \overline{\N A}:\phi_0(\uu)\le n\}|}{n^d}=\frac{|n\PP\cap \Z^d|}{n^d}\longrightarrow \text{vol}(\PP).
\]
If $\N A$ is scored, then $\overline{\N A}\setminus \N A$ consists of finitely many hyperplane sections of $\overline{\N A}$. There are $O(n^{d-1})$ lattice points in these hyperplane sections that also lie in $n\PP$. Thus 
\[
\ell_R(R/\mm^{\langle n+1\rangle})=\ell_{\overline{R}}(\overline{R}/\nn^{\langle n+1\rangle})-O(n^{d-1})=|n\PP\cap \Z^d|-O(n^{d-1}).
\]
Dividing both sides by $n^d$ and taking the limit gives us the desired result.

Now suppose $R$ is a $\CC$-semigroup. Since $0\le \sepdeg(\uu,\Theta(-\uu))\le |\gaps(\N A)|$, 
\[
J_n:=\langle x^\uu:\phi_0(\uu)+1> n\rangle\subseteq \mm^{\langle n+1\rangle}\subseteq \langle x^\uu:\phi_0(\uu)+|\gaps(\N A)|>n\rangle=:J_n'.
\]
Moreover, $J_n$ and $\nn^{\langle n\rangle}$ differ by finitely many monomials: those whose exponents lie in $\gaps(\N A)$ and satisfy $\phi_0(\uu)+1>n$. Similarly $J_n'$ and $\nn^{\langle n-|\gaps(\N A)|+1\rangle}$ differ by finitely many monomials. Thus
\[
\limsup_{n\rightarrow \infty} \frac{\ell_{\overline{R}}(\overline{R}/\nn^{\langle n-|\gaps(\N A)|+1\rangle})}{n^d}\le \limsup_{n\rightarrow \infty} \frac{\ell_{{R}}({R}/\mm^{\langle n+1\rangle})}{n^d}\le \limsup_{n\rightarrow \infty} \frac{\ell_{\overline{R}}(\overline{R}/\nn^{\langle n\rangle})}{n^d}.
\]
The left and right hand sides both converge to $\text{vol}(\PP)$, so our proof is complete.
\end{proof}

So even though $\mm^{\langle n\rangle}$ is not a graded family if $R$ is a $\CC$-semigroup ring, we still have existence of differential signature.

\subsection{Pinched Veronese rings}

We conclude this article by studying the class of pinched Veronese rings. These are obtained by taking a Veronese ring and removing a single generator. We will show that in many cases we obtain either a scored or a $\CC$-semigroup ring and be able to apply our previous results here.

Given integers $a>1$ and $d\ge 1$, the Veronese ring (of degree $a$ in dimension $d$) is the affine semigroup ring over the semigroup $V_{d,a}=\langle\vv\in \N^d: |\vv|=a\rangle$. However, since $\Z V_{d,a}\ne \Z^d$ (for example $\ee_1\notin \Z V_{d,a}$), this is not the setting in which we want to study Veronese rings. Instead, we will construct a semigroup $\N A$ such that $\Z A=\Z^d$ and $\N A\simeq V_{d,a}$.

Let $\N A_{d,a}=\bigcap_{i=1}^d \{\uu\in\N^d:h_i(\uu)\ge 0\}$ where
\[
h_1=ax_1-x_2-\cdots-x_d,\quad h_2=x_2,\quad \ldots \quad h_d=x_d.
\]
Consider the linear map $\psi:\N A_{d,a}\rightarrow \N^d$ defined by
\begin{equation}
\label{veronese iso}
\uu\mapsto (h_1(\uu),\cdots,h_d(\uu)).
\end{equation}

We claim that $\psi:\N A_{d,a}\rightarrow V_{d,a}$ is our desired isomorphism of semigroups. First, because $|\psi(\uu)|=h_1(\uu)+\cdots+h_d(\uu)=au_1$, $\psi(\N A_{d,a})\subseteq V_{d,a}$. The map $\psi$ is injective because $\psi$ can be represented by a $d\times d$ matrix with linearly independent columns. To see it is bijective, observe that $\psi^{-1}(\vv)=\left(\frac{|\vv|}{a},v_2,\cdots,v_d\right)$ is indeed a well-defined inverse of $\psi$. Applying $\psi^{-1}$ to a generating set of $V_{d,a}$ shows that $\{(1,v_2,\cdots,v_d):v_2+\cdots+v_d\le a\}$ is a generating set for $\N A_{d,a}$.

\begin{cor}
If $\k[V_{d,a}]$ is a $d$-dimensional Veronese ring in degree $a$ and $P$ a monomial prime of $\k[V_{d,a}]$ then
\[
P^{(an)}\subseteq P^{\langle an\rangle}\subseteq P^n.
\]
\end{cor}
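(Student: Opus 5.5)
The plan is to transport the problem through the isomorphism $\psi$ of \eqref{veronese iso} to the normal semigroup $\N A_{d,a}$ and then apply \Cref{:t prime containment}. Since $\psi:\N A_{d,a}\to V_{d,a}$ is an isomorphism of semigroups, it induces a $\k$-algebra isomorphism $\k[\N A_{d,a}]\cong\k[V_{d,a}]$ that sends monomials to monomials. Ordinary, symbolic and differential powers are all intrinsic to the $\k$-algebra: $\DD_{R|\k}$ is defined purely from $R$ and $\k$. So it suffices to prove the containments for a monomial prime $P=P_F$ of $\k[\N A_{d,a}]$. The semigroup $\N A_{d,a}$ is cut out by the inequalities $h_i\ge 0$, so it is normal, hence trivially scored. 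Moreover $\Z A_{d,a}=\Z^d$, since $\ee_1=(1,0,\dots,0)$ and the vectors $(1,\ee_j)$ lie in $A_{d,a}$. Thus \Cref{semigroup setup} holds, and \Cref{:t prime containment} gives $P_F^{\langle D_Fn\rangle}\subseteq P_F^n$. Combined with \Cref{symbolic lemma}, this yields $P_F^{(D_Fn)}\subseteq P_F^{\langle D_Fn\rangle}\subseteq P_F^n$.

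It then remains to show $D_F\le a$ for every face $F$. This gives $P^{\langle an\rangle}\subseteq P^{\langle D_Fn\rangle}$, because differential powers are decreasing in $n$, and likewise $P^{(an)}\subseteq P^{(D_Fn)}$. To bound $D_F$, I first need to check that $h_1,\dots,h_d$ really are the primitive integral defining functionals of $\R_{\ge0}A_{d,a}$. The cone is simplicial, cut out by $d$ independent inequalities, so each $h_i$ defines a facet. Each $h_i$ has integer coefficients and takes the value $1$ somewhere on $\Z^d$: $h_i(\ee_i)=1$ for $i\ge2$, and $h_1(\ee_1)=a$ while $h_1((1,a-1,0,\dots,0))=1$. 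So each $h_i$ is primitive.

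Next, for any face $F$ we have $\phi_F=\sum_{i\in\supp(F)}h_i\le h_1+\cdots+h_d=\phi_0$ on the cone, since each $h_i\ge0$ there. For a generator $\aa=(1,v_2,\dots,v_d)$ with $v_2+\cdots+v_d\le a$,
\[
\phi_0(\aa)=(a-v_2-\cdots-v_d)+v_2+\cdots+v_d=a,
\]
which is just $|\psi(\aa)|=a\cdot 1$. Hence $D_F\le D_0=a$.

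The main subtlety is the reduction step rather than the inequality. One must confirm that the isomorphism carries monomial primes to monomial primes, which holds because $\psi$ maps faces to faces. One must also confirm that the constant $D_F$ really is computed with respect to the generating set $\{(1,v_2,\dots,v_d)\}$, which it is, as that set is the image of the standard generators of $V_{d,a}$.
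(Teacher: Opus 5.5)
Your proposal is correct and follows the paper's proof. You transfer the problem to the normal (hence scored) semigroup $\N A_{d,a}$ via $\psi$, apply \Cref{:t prime containment}, and note that every generator $(1,v_2,\dots,v_d)$ has $\phi_0$-value $a$, so $D_F\le D_0=a$. Your additional checks make explicit what the paper's two-line proof leaves implicit: primitivity of the $h_i$, $\Z A_{d,a}=\Z^d$, and monotonicity to pass from $D_Fn$ to $an$.
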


\begin{proof}
It suffices to apply \Cref{:t prime containment} to some monomial prime in $\N A_{d,a}$. Since $\phi_0(1,v_2,\cdots,v_d)=a$ for all $v_2,\cdots,v_d\in \N$ we are done.
\end{proof}

We now consider the pinched Veronese semigroup $V_{d,a,\bb}=\langle\vv\in \N^d: |\vv|=a,\vv\ne \bb\rangle$. The following statement discusses the structure of $\N A_{d,a,\bb}:=\psi^{-1}(V_{d,a,\bb})$ which depends on the value of $\max(\bb):=\max_i\{b_i\}$.

\begin{prop}
Let $\psi$ be defined as in \eqref{veronese iso} and $V_{d,a,\bb}$ be a pinched Veronese semigroup. 

\begin{enumerate}
\item If $\max(\bb)<a-1$ then $\N A_{d,a,\bb}$ is a $\CC$-semigroup.

\item If $\max(\bb)=a-1$ and $a>2$ then $\N A_{d,a}\setminus \N A_{d,a,\bb}$ consists of all lattice points on a single ray parallel to an extremal ray of $\R_{\ge 0} A_{d,a}$. 

\item If $\max(\bb)=a-1$, $a=2$, and $d=2$ (i.e., $\k[V_{d,a,\bb}]=\k[x^2,y^2]$) then $\N A_{d,a,\bb}$ is isomorphic to $\N^2$. If $d>2$ then $V_{d,a}\setminus V_{d,a,\bb}$ consists of infinitely many rays parallel to an extremal ray of $\R_{\ge 0} A_{d,a}$.

\item If $\max(\bb)=a$ and $d=2$ then $\N A_{d,a,\bb}\simeq \N A_{d,a-1}$. If $d>2$ then 
\[
\N A_{d,a,\bb}=\{\uu \in \N A_{d,a}:u_1\le u_2+\cdots+u_d\}.
\]
\end{enumerate}
\end{prop}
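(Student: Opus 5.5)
The plan is to carry out the whole analysis on the Veronese side. Since $\psi:\N A_{d,a}\to V_{d,a}$ is an isomorphism of semigroups given by an invertible linear map, it sends rays to rays, parallel lines to parallel lines, and extremal rays of $\R_{\ge 0}A_{d,a}$ to the coordinate rays $\R_{\ge 0}\ee_i$ of $\R_{\ge 0}V_{d,a}=\R_{\ge 0}^d$. So it suffices to describe the complement $V_{d,a}\setminus V_{d,a,\bb}$ inside $V_{d,a}=\{\vv\in\N^d:a\mid |\vv|\}$ and pull it back by $\psi^{-1}$.

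A vector $\vv$ with $|\vv|=ka$ lies in the complement exactly when \emph{every} decomposition $\vv=\ww_1+\cdots+\ww_k$ into vectors with $|\ww_j|=a$ uses $\bb$. The basic tool will be an exchange move. Suppose a decomposition contains $\ww_1=\bb$ and some other summand $\ww_2$. Pick indices $i,j$ with $b_i>0$ and $(\ww_2)_j>0$, and replace the pair by $\bb-\ee_i+\ee_j$ and $\ww_2+\ee_i-\ee_j$. The new pair differs from $\bb$ unless $\ww_2=\bb-\ee_j+\ee_i$. Repeating the move eliminates copies of $\bb$ one at a time, so a vector fails to lie in $V_{d,a,\bb}$ only when every available swap is blocked. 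I will make this precise as a lemma: if $\bb$ is used and no swap works, then every other summand has its support inside the support of $\bb$, and the swaps fail because of extreme coordinates.

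The four cases follow from this lemma.
\begin{enumerate}
\item If $\max(\bb)<a-1$, then either $\bb$ has at least three nonzero coordinates or its coordinates are bounded away from $a$. In both situations a blocked configuration forces $k$ to be small, so there are only finitely many such $\vv$, which is exactly the $\CC$-semigroup condition.
\item If $\bb=(a-1)\ee_i+\ee_j$ with $a>2$, the vectors $\vv=(ka-1)\ee_i+\ee_j$ are missing. The summand containing $\ee_j$ must have $i$-coordinate $a-1$, and hence equals $\bb$. For any other $\vv$, a swap is available because $a-2\ge 1$ gives room. This yields the single ray $\ee_j+\N\,\ee_i$ (shifted), which is parallel to the extremal ray $\R_{\ge 0}\ee_i$.
\item If $a=2$ and $\bb=\ee_i+\ee_j$, the complement is $\{\vv: v_i,v_j \text{ odd},\ v_\ell=0 \text{ otherwise}\}$. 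For $d=2$ this gives $V_{2,2,\bb}=\langle 2\ee_1,2\ee_2\rangle\cong\N^2$. For $d>2$ the complement consists of the rays $(2s+1)\ee_j+(\text{odd})\,\ee_i$, one for each $s$: infinitely many rays parallel to an extremal ray.
\item If $\bb=a\ee_i$, the missing $\vv$ are those whose $i$-th coordinate is too large to be covered by summands each having $i$-coordinate at most $a-1$. I expect the condition to be $v_i>(a-1)k$, that is $v_i>(a-1)|\vv|/a$. The complement of this set is $\{\vv: a v_i\le (a-1)|\vv|\}$ intersected with the lattice of degrees divisible by $a$. Pulling back by $\psi^{-1}$ with $i=1$ (so that $v_1=h_1(\uu)=au_1-u_2-\cdots-u_d$ and $|\vv|=au_1$) should give the inequality $u_1\le u_2+\cdots+u_d$. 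I will recheck the index bookkeeping here. For $d=2$, the generators $(a-1,1),\dots,(0,a)$ are identified with the Veronese generators of degree $a-1$ by the map $(v_1,v_2)\mapsto(v_1,v_2)$ minus multiples of $\ee_2$; I will verify that this is a semigroup isomorphism onto $V_{2,a-1}$.
\end{enumerate}

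The main obstacle is the sufficiency direction in each case: showing that every vector outside the claimed exceptional set actually admits a $\bb$-free decomposition. This is where the exchange lemma must be applied carefully. In case (1) the difficulty is showing that blocked configurations have bounded $k$. In case (4) it is the counting argument that a degree-$ka$ vector with $v_i\le (a-1)k$ can be split into $k$ pieces, each with $i$-coordinate at most $a-1$ and each of degree $a$, which I plan to prove by a greedy induction on $k$.
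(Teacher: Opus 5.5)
Your route is correct, but it differs from the paper's. The paper does not prove (1)--(3) itself; it cites \cite[Lemmas 2.17, 2.18]{MP23}. It argues only (4), and does so on the $\N A_{d,a}$ side: it lists the surviving generators $(1,u_2,\dots,u_d)$ with $1\le u_2+\cdots+u_d\le a$ and reads off $u_1\le u_2+\cdots+u_d$. For $d=2$ it takes $\bb=a\ee_2$, so the survivors $(1,0),\dots,(1,a-1)$ are literally the generators of $\N A_{2,a-1}$, and no shear map needs verifying. Your condition $v_1\le(a-1)k$ is exactly $u_1\le u_2+\cdots+u_d$ transported by $\psi$. Your splitting argument supplies the sufficiency direction that the paper leaves implicit.

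For (1)--(3), your exchange argument on $V_{d,a}$ makes the proposition self-contained, and it gives more than is stated. If $|\mathrm{supp}(\bb)|\ge 2$, a pair $(\bb,\mathbf{w})$ is blocked only when $\mathbf{w}=a\ee_t$ and $\bb=(a-1)\ee_t+\ee_s$:
\begin{itemize}
\item a coordinate of $\mathbf{w}$ outside $\mathrm{supp}(\bb)$, or $|\mathrm{supp}(\bb)|\ge 3$, gives two choices of $i$ that cannot both be blocked;
\item $\mathrm{supp}(\mathbf{w})=\mathrm{supp}(\bb)$ of size $2$ gives two incompatible blocking conditions.
\end{itemize}
So in case (1) no pair is blocked. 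Hence $V_{d,a}\setminus V_{d,a,\bb}=\{\bb\}$, i.e.\ $|\gaps|=1$. Since $D_0=a$ here, this makes the paper's later choice $D=a+1$ for the maximal ideal of a pinched Veronese ring agree with $D_0+\gamma$ from the containment theorem.

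Two small repairs are needed.
\begin{itemize}
\item The swap fails exactly when $\mathbf{w}_2=\bb-\ee_i+\ee_j$, not $\bb-\ee_j+\ee_i$, and you must require $i\neq j$. With your sign, the genuinely blocked pair $(\bb,a\ee_t)$ in case (2) would appear unblocked, contradicting the missing ray you exhibit.
\item In (1), state explicitly that a finite complement forces $V_{d,a,\bb}$ to have the same cone and the same group as $V_{d,a}$. Then its saturation is $V_{d,a}$, and finiteness is indeed the $\CC$-semigroup condition.
\end{itemize}
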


\begin{proof}
The first point follows from \cite[Lemma 2.17]{MP23}, and the second and third point follow from \cite[Lemma 2.18]{MP23}. 

If $\max(\bb)=a$ and $d=2$, we may assume $\bb=a\ee_2$. Since $\psi^{-1}(a\ee_2)=(1,a)$, $\N A_{d,a,\bb}$ is generated by $\{(1,0),\cdots,(1,a-1)\}$ which is exactly a generating set for $\N A_{d,a-1}$. 

Now suppose $\max(\bb)=a$, $d>2$ and assume without loss of generality that $\bb=a \ee_1$. Then $\psi^{-1}(a\ee_1)=\ee_1$ and $\N A_{d,a,a \ee_1}=\langle \{(1,u_2,\cdots,u_d):1\le u_2+\cdots+u_d\le a\}\rangle$. That is,
\[
\N A_{d,a}\setminus \N A_{d,a,a \ee_1}=\{\uu\in \N A_{d,a}:u_1>u_2+\cdots+u_d\}.
\]
\end{proof}

When $\max(\bb)=a$ and $d>2$, $\N A_{d,a,\bb}$ is isomorphic to the normal semigroup with defining functionals
\[
h_1=ax_1-x_2-\cdots-x_d,\quad h_2=x_2,\quad \ldots, \quad h_d=x_d,\quad h_{d+1}=x_2+\cdots+x_d-x_1.
\]
If $\max(\bb)=a$ and $d=2$ then $\N A_{d,a,\bb}$ is isomorphic to the normal semigroup with defining functionals $h_1=(a-1)x_1-x_2$, $h_2=x_2$.

The following corollary is a direct application of \Cref{:t prime containment}. 

\begin{cor}
\label[cor]{pinched veronese containment}
Let $\k[V_{d,a,\bb}]$ be a pinched Veronese ring and $P\subseteq \k[V_{d,a,\bb}]$ a monomial prime ideal. Consider the constant $D$ defined by the following conditions 
\begin{enumerate}
\item If $\max(\bb)<a-1$ and $P$ is non-maximal, let $D=a$. If $P$ is maximal let $D=a+1$. 

\item If $\max(\bb)=a-1$, $d=2$ and $a>2$, let $D=a$. 

\item If $\max(\bb)=a$ and $d=2$ let $D=a-1$. If $d>2$ let $D=2a-1$.
\end{enumerate}

If $V_{d,a,\bb}$ and $P$ satisfy any of the above three conditions, then 
\[
P^{(Dn)}\subseteq P^{\langle Dn\rangle}\subseteq P^n.
\]
\end{cor}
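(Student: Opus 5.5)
The plan is to reduce each case to \Cref{:t prime containment}: pass through the isomorphism $\psi$ of \eqref{veronese iso}, identify $\N A_{d,a,\bb}$ as scored or as a $\CC$-semigroup, and then bound the constants $D_F$ (or $D_0+\gamma$) by the stated $D$. The first containment $P^{(Dn)}\subseteq P^{\langle Dn\rangle}$ holds for every $n$ by \Cref{symbolic lemma}, so only $P^{\langle Dn\rangle}\subseteq P^n$ needs proof.

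Next I reduce to a bound on generators. Differential powers are decreasing in the exponent, so if $D\ge D_F$ then \Cref{:t prime containment} gives $P_F^{\langle Dn\rangle}\subseteq P_F^{\langle D_Fn\rangle}\subseteq P_F^n$. Every $h_i$ is nonnegative on the semigroup, so $\phi_F(\aa_i)\le \phi_0(\aa_i)$ for each generator, and hence $D_F\le D_0$. It therefore suffices to show $D\ge D_0$ in the non-maximal and scored cases, and $D\ge D_0+\gamma$ for $\mm$ in the $\CC$-semigroup case. In each case I pull the generators back to vectors $(1,v_2,\dots,v_d)$ or their images, and evaluate the sum of the defining functionals on them.

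I then go through the cases in order.
\begin{enumerate}
\item Case $\max(\bb)<a-1$. The semigroup is a $\CC$-semigroup by the previous proposition, with saturation $\N A_{d,a}$. Its generators are among the $(1,v_2,\dots,v_d)$, and for these $\phi_0=h_1+\dots+h_d=a\cdot 1=a$. So $D_0=a$, which handles non-maximal $P$. For $\mm$ I need $\gamma=|\gaps|=1$, namely that the only gap is $\psi^{-1}(\bb)$; I would extract this from \cite[Lemma 2.17]{MP23}, or check directly that every other lattice point of $\N A_{d,a}$ is a sum of the remaining generators. This gives $D_0+\gamma=a+1$.
\item Case $\max(\bb)=a-1$, $d=2$, $a>2$. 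The removed set is a single ray parallel to an extremal ray. In dimension $2$ that ray is a hyperplane section parallel to a facet, so the semigroup is scored with the same defining functionals. Then $D_0=a$ as before.
\item Case $\max(\bb)=a$. Here I use the explicit normal descriptions after the proposition. For $d=2$, $\phi_0=(a-1)x_1$, and the generators $(1,j)$ give $D_0=a-1$. For $d>2$, $\phi_0=h_1+\dots+h_{d+1}=(a-1)x_1+(x_2+\dots+x_d)$. The generators $(1,u_2,\dots,u_d)$ with $1\le u_2+\dots+u_d\le a$ give maximum $a-1+a=2a-1$. Normal semigroups are scored, so \Cref{:t prime containment} applies.
\end{enumerate}

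The main obstacle is the maximal ideal in case (1). The bound $D=a+1$ needs $\gamma=1$ exactly, and it needs it for all $n$, not just $n$ large; otherwise only the weaker constant $a+\gamma$ is available. A smaller concern in case (3) with $d>2$ is that $h_{d+1}$ is primitive and that the lattice is still $\Z^d$. This holds because the generators include $(1,1,0,\dots,0)$ and $(1,0,\dots,0,1)$ together with vectors differing by $\ee_i$, so they span $\Z^d$.
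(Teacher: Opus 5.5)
Your proposal is correct and is essentially the paper's argument: the paper treats this corollary as a direct application of \Cref{:t prime containment}. You supply the bookkeeping it leaves implicit, namely $D_F\le D_0$, the value of $\phi_0$ on the generators $(1,v_2,\dots,v_d)$ in each case, and the fact that the maximal-ideal case with $D=a+1$ needs $\gamma=1$. That last fact does hold: when $\max(\bb)<a-1$ one has $\gaps(\N A_{d,a,\bb})=\{\psi^{-1}(\bb)\}$, so the constant $a+1$ works for all $n$, as you wanted.
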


We also apply \Cref{cofinality theorem} and \Cref{diff sig} to obtain the following.

\begin{cor}
Suppose $\k[V_{d,a,\bb}]$ is a pinched Veronese ring satisfying any of the three conditions in the statement of \Cref{pinched veronese containment}, and $D$ is chosen as above.
\begin{enumerate}
\item If $I\subseteq \k[V_{d,a,\bb}]$ is a monomial ideal then there exists some $C(I)=C>0$ such that
\[
I^{Cn}\subseteq I^{(Cn)}\subseteq I^{\langle Cn\rangle}\subseteq I^n.
\]
In particular, the topologies defined by $\{I^n\}_{n\ge 0}, \{I^{(n)}\}_{n\ge 0}$ and $\{I^{\langle n\rangle}\}_{n\ge 0}$ are equivalent.

\item The differential signature $s^{\diff}(\k[V_{d,a,\bb}])$ exists and is equal to the $s^{\diff}(\k[\overline{V_{d,a,\bb}}])$. 
\end{enumerate}
\end{cor}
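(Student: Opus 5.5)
The plan is to show that in each of the three cases the pinched Veronese semigroup falls under one of the earlier settings, and then cite \Cref{cofinality theorem} and \Cref{diff sig} directly. Since the isomorphism $\psi:\N A_{d,a}\to V_{d,a}$ of \eqref{veronese iso} restricts to an isomorphism $\N A_{d,a,\bb}\to V_{d,a,\bb}$, it induces a $\k$-algebra isomorphism $\k[\N A_{d,a,\bb}]\cong \k[V_{d,a,\bb}]$. This isomorphism sends monomial ideals to monomial ideals, so it commutes with ordinary, symbolic and differential powers; the last holds because $\DD$ is intrinsic to the $\k$-algebra. It also preserves completions at the homogeneous maximal ideal. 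It therefore suffices to work with $\N A_{d,a,\bb}$.

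First I classify each case using the preceding proposition.
\begin{enumerate}
\item If $\max(\bb)<a-1$, then $\N A_{d,a,\bb}$ is a $\CC$-semigroup by part (1).
\item If $\max(\bb)=a-1$, $d=2$ and $a>2$, then by part (2) the complement $\N A_{d,a}\setminus\N A_{d,a,\bb}$ consists of the lattice points on a single ray parallel to an extremal ray. In dimension $2$ an extremal ray is a facet, and the facet hyperplanes are lines, so this ray is a hyperplane section parallel to a defining hyperplane. I must still check that it is not that hyperplane itself and that $\Z A_{d,a,\bb}=\Z^2$. Given these, $\N A_{d,a,\bb}$ is scored.
\item If $\max(\bb)=a$, part (4) together with the following remark shows that $\N A_{d,a,\bb}$ is isomorphic to a normal semigroup, which is trivially scored.
\end{enumerate}
The lattice check in case (2) is where I expect the main obstacle. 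I need to see that removing $\bb$ from the generators does not shrink $\Z A$. I would verify this using the remaining generators $(1,v_2)$ with $v_2\ne$ the pinched value; since $a>2$, at least two consecutive values remain, and these generate $\Z^2$. If the lattice were smaller, I would rescale as in Section 2, but I expect this will not be needed.

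For part (1), I fix a monomial ideal $I$ and apply \Cref{cofinality theorem} to the scored or $\CC$-semigroup $\N A_{d,a,\bb}$. This gives a constant $C$ with $I^{Cn}\subseteq I^{(Cn)}\subseteq I^{\langle Cn\rangle}\subseteq I^n$. Cofinality of the three filtrations then follows as in \Cref{symbolic lemma}.

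For part (2), I apply \Cref{diff sig} to $R=\k[\![\N A_{d,a,\bb}]\!]$, transported through $\psi$. This gives $s^{\diff}(R)=s^{\diff}(\overline R)$, and it equals $d!\,\mathrm{vol}(\PP)$. In particular the limit exists. Here I would note that the normalization of $\k[V_{d,a,\bb}]$ is $\k[\overline{V_{d,a,\bb}}]$, which corresponds under $\psi$ to $\k[\overline{\N A_{d,a,\bb}}]$. In the cases where $\N A_{d,a,\bb}$ is already normal, both sides coincide trivially.
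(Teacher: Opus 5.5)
Your proposal is correct and follows the paper's route. The paper obtains this corollary by noting that in each of the three cases $\N A_{d,a,\bb}$ is scored (normal in case (3)) or a $\CC$-semigroup, then citing \Cref{cofinality theorem} and \Cref{diff sig}; your extra checks (transport through $\psi$, and in case (2) that the lattice is still $\Z^2$ and the removed points form the full section $\{h_i=1\}$ of the cone) supply details the paper leaves implicit.
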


We may also compute the differential signature for the pinched Veronese rings satisfying any of the three conditions in \Cref{pinched veronese containment} via interated integration. When $\overline{V_{d,a,\bb}}=V_{d,a}$, we may induce on $d$ to show that $s^{\diff}(\k[V_{d,a,\bb}])=\frac{1}{a}$. If $\max(\bb)=a$ and $d>2$, the expression for the volume is more complicated.

\newpage

\bibliographystyle{amsalpha}
\bibliography{References}

\end{document}